\newtheorem{letterthm}{Theorem}
\newtheorem{theo}{Theorem}[section]
\newtheorem{lem}[theo]{Lemma}
\newtheorem{prop}[theo]{Proposition}
\theoremstyle{definition}
\newtheorem{example}[theo]{Example}
\newtheorem{nota}[theo]{Notation}
\newtheorem{df}[theo]{Definition}
\newtheorem*{claim}{Claim}
\newcommand{\R}{\mathbf{R}}
\newcommand{\C}{\mathbf{C}}
\newcommand{\Z}{\mathbf{Z}}
\newcommand{\F}{\mathbf{F}}
\newcommand{\N}{\mathbf{N}}
\newcommand{\Ad}{\operatorname{Ad}}
\newcommand{\id}{\mathord{\text{\rm id}}}
\newcommand{\Tr}{\operatorname{Tr}}
\newcommand{\Aut}{\operatorname{Aut}}
\newcommand{\alg}{\text{\rm alg}}
\newcommand{\op}{\text{\rm op}}
\newcommand{\Ind}{\operatorname{Ind}}
\newcommand{\core}{\mathord{\text{\rm c}}}
\newcommand{\Proj}{\mathord{\text{\rm Proj}}_{\text{\rm f}}}
\newcommand{\Prob}{\mathord{\text{\rm Prob}}}
\newcommand{\weak}{\mathord{\text{\rm weak}}}
\newcommand{\Ball}{\mathord{\text{\rm Ball}}}
\newcommand{\rH}{\mathord{\text{\rm H}}}
\newcommand{\rL}{\mathord{\text{\rm L}}}
\newcommand{\cN}{\mathcal{N}}
\newcommand{\recht}{\rightarrow}
\newcommand{\cR}{\mathcal{R}}
\newcommand{\cK}{\mathcal{K}}
\newcommand{\cI}{\mathcal{I}}
\newcommand{\cM}{\mathcal{M}}
\newcommand{\cB}{\mathcal{B}}
\newcommand{\vphi}{\varphi}
\newcommand{\vphih}{\widehat{\varphi}}
\newcommand{\si}{\sigma}
\newcommand{\ot}{\otimes}
\newcommand{\ovt}{\mathbin{\overline{\otimes}}}
\newcommand{\rd}{\mathord{\text{\rm d}}}
\newcommand{\cA}{\mathcal{A}}
\newcommand{\cJ}{\mathcal{J}}
\newcommand{\dpr}{^{\prime\prime}}
\begin{document}

\title[Amalgamated free products with at most one Cartan subalgebra]{Amalgamated free product type III  factors with at most one Cartan subalgebra}

\author[R\'emi Boutonnet]{R\'emi Boutonnet}
\address{ENS Lyon \\
UMPA UMR 5669 \\
69364 Lyon cedex 7 \\
France}
\email{remi.boutonnet@ens-lyon.fr}
\thanks{R.B.\ and C.H.\ are supported by ANR grant NEUMANN}

\author[Cyril Houdayer]{Cyril Houdayer}
\address{CNRS-ENS Lyon \\
UMPA UMR 5669 \\
69364 Lyon cedex 7 \\
France}
\email{cyril.houdayer@ens-lyon.fr}

\author[Sven Raum]{Sven Raum}
\address{KU Leuven \\ Department of Mathematics \\ Celestijnenlaan 200B \\ B-3001 Leuven \\ Belgium}
\email{sven.raum@wis.kuleuven.be}
\thanks{S.R.\ is supported by K.U. Leuven BOF research grant OT/08/032.}

\subjclass[2010]{46L10; 46L54;  37A40}
\keywords{Amalgamated free products; Cartan subalgebras; Deformation/rigidity theory; Noncommutative flow of weights; Nonsingular equivalence relations}

\begin{abstract}
We investigate Cartan subalgebras in nontracial amalgamated free product von Neumann algebras $M_1 \ast_B M_2$ over an amenable von Neumann subalgebra $B$. First, we settle the problem of the absence of Cartan subalgebra in arbitrary free product von Neumann algebras. Namely, we show that any nonamenable free product von Neumann algebra $(M_1, \varphi_1) \ast (M_2, \varphi_2)$ with respect to faithful normal states has no Cartan subalgebra. This generalizes the tracial case that was established in \cite{Io12a}. Next, we prove that any countable nonsingular ergodic equivalence relation $\mathcal R$ defined on a standard measure space and which splits as the free product $\mathcal R = \mathcal R_1 \ast \mathcal R_2$ of  recurrent subequivalence relations gives rise to a nonamenable factor $\rL(\mathcal R)$ with a unique Cartan subalgebra, up to unitary conjugacy. Finally, we prove unique Cartan decomposition for a class of group measure space factors $\rL^\infty(X) \rtimes \Gamma$ arising from nonsingular free ergodic actions $\Gamma \curvearrowright (X, \mu)$ on standard measure spaces of amalgamated groups $\Gamma = \Gamma_1 \ast_{\Sigma} \Gamma_2$ over a finite subgroup $\Sigma$.
\end{abstract}

\maketitle

\section{Introduction and main results}

A {\em Cartan subalgebra} $A$ in a von Neumann algebra $M$ is a unital maximal abelian $\ast$-subalgebra $A \subset M$ such that there exists a faithful normal conditional expectation $E_A : M \to A$ and such that the group of normalizing unitaries of $A$ inside $M$ defined by $\mathcal N_M(A) = \{u \in \mathcal U(M) : u A u^* = A\}$ generates $M$.

By a classical result of Feldman and Moore \cite{feldman-moore}, any Cartan subalgebra $A$ in a von Neumann algebra $M$ with separable predual arises from a countable nonsingular equivalence relation $\mathcal R$ on a standard measure space $(X, \mu)$ and a $2$-cocycle $\upsilon \in \rH^2(\mathcal R, \mathbf T)$. Namely, we have the following isomorphism of inclusions
$$(A \subset M) \; \cong \; (\rL^\infty(X) \subset \rL(\mathcal R, \upsilon)).$$
In particular, for any nonsingular free action $\Gamma \curvearrowright (X, \mu)$ of a countable discrete group $\Gamma$ on a standard measure space $(X, \mu)$, $\rL^\infty(X)$ is a Cartan subalgebra in the group measure space von Neumann algebra $\rL^\infty(X) \rtimes \Gamma$.

The presence of a Cartan subalgebra $A$ in a von Neumann algebra $M$ with separable predual is therefore an important feature which allows to divide the classification problem for $M$ up to $\ast$-isomorphism into two different questions: uniqueness of the Cartan subalgebra $A$ inside $M$ up to conjugacy and classification of the underlying countable nonsingular equivalence relation $\mathcal R$ up to orbit equivalence.

In \cite{connes-feldman-weiss}, Connes, Feldman and Weiss showed that any amenable countable nonsingular ergodic equivalence relation is hyperfinite and thus implemented by an ergodic $\Z$-action. This implies, together with \cite{Kr75}, that any two Cartan subalgebras inside an amenable factor are always conjugate by an automorphism.

The uniqueness of Cartan subalgebras up to conjugacy is no longer true in general for nonamenable factors. In \cite{CJ81}, Connes and Jones discovered the first examples of ${\rm II_1}$ factors with at least two Cartan subalgebras which are not conjugate by an automorphism. More concrete examples were later found by Popa and Ozawa in \cite{OP08}. We also refer to the recent work of Speelman and Vaes \cite{SV11} on ${\rm II_1}$ factors with uncountably many non (stably) conjugate Cartan subalgebras.

In the last decade, Popa's {\em deformation/rigidity theory} \cite{Po01, popa-malleable1} has led to a lot of progress in the classification of ${\rm II_1}$ factors arising from probability measure preserving (pmp) actions of countable discrete groups on standard probability spaces and from countable pmp equivalence relations. We refer to the recent surveys \cite{Po06b, Va10a, ioana-ecm} for an overview of this  topic.

We highlight below three breakthrough results regarding uniqueness of Cartan subalgebras in nonamenable ${\rm II_1}$ factors. In his pioneering article \cite{Po01}, Popa showed that any {\em rigid} Cartan subalgebra inside group measure space ${\rm II_1}$ factors $\rL^\infty(X) \rtimes \F_n$ arising from {\em rigid} pmp free ergodic actions $\F_n \curvearrowright (X, \mu)$ of the free group $\F_n$ ($n \geq 2$) is necessarily unitarily conjugate to $\rL^\infty(X)$. In \cite{OP07}, Ozawa and Popa proved that any {\em compact} pmp free ergodic action of the free group $\mathbf F_n$ ($n \geq 2$) gives rise to a ${\rm II_1}$ factor $\rL^\infty(X) \rtimes \F_n$ with unique Cartan decomposition, up to unitary conjugacy. This was the first result in the literature proving the uniqueness of Cartan subalgebras in nonamenable ${\rm II_1}$ factors. Recently, Popa and Vaes \cite{PV11} proved that {\em any} pmp free ergodic action of the free group $\F_n$ ($n \geq 2$) gives rise to a ${\rm II_1}$ factor $\rL^\infty(X) \rtimes \F_n$ with unique Cartan decomposition, up to unitary conjugacy. We refer to \cite{OP08, Ho09, CS11, CSU11, PV12, houdayer-vaes, Io12a} for further results in this direction.

Very recently, using \cite{PV11}, Ioana \cite{Io12a} obtained new results regarding the Cartan decomposition of {\em tracial} amalgamated free product von Neumann algebras $M_1 \ast_B M_2$. Let us highlight below two of Ioana's results \cite{Io12a}: any nonamenable tracial free product $M_1 \ast M_2$ has no Cartan subalgebra and any pmp free ergodic action $\Gamma \curvearrowright (X, \mu)$ of a free product group $\Gamma = \Gamma_1 \ast \Gamma_2$ with $|\Gamma_1| \geq 2$ and $|\Gamma_2| \geq 3$ gives rise to a ${\rm II_1}$ factor with  unique Cartan decomposition, up to unitary conjugacy.

In the present paper, we use Popa's deformation/rigidity theory to investigate Cartan subalgebras in {\em nontracial} amalgamated free product (AFP) von Neumann algebras $M_1 \ast_B M_2$ over an amenable von Neumann subalgebra $B$. We generalize some of Ioana's recent results \cite{Io12a} to this setting. The methods of proofs rely on a combination of results and techniques from \cite{PV11, houdayer-vaes, Io12a}.

\subsection*{Statement of the main results}

Using his  free probability theory, Voiculescu \cite{Vo95} proved that the free group factors $\rL(\F_n)$ ($n \geq 2$) have no Cartan subalgebra. This exhibited the first examples of ${\rm II_1}$ factors with no Cartan decomposition. This result was generalized later in \cite{Ju05} to free product ${\rm II_1}$ factors $M_1 \ast M_2$ of diffuse subalgebras which are embeddable into $R^\omega$. Finally, the general case of arbitrary tracial free product von Neumann algebras was recently obtained in \cite{Io12a} using Popa's deformation/rigidity theory.

The first examples of type ${\rm III}$ factors with no Cartan subalgebra were obtained in \cite{Sh00} as a consequence of \cite{Vo95}. Namely, it was shown that the unique free Araki-Woods factor of type ${\rm III}_\lambda$ ($0 < \lambda < 1$)  has no Cartan subalgebra. This result was vastly generalized later in \cite{houdayer-ricard} where it was proven that in fact any free Araki-Woods factor has no Cartan subalgebra.

Our first result settles the question of the absence of Cartan subalgebra in arbitrary free product von Neumann algebras.

\begin{letterthm}\label{thmA}
Let $(M_1, \varphi_1)$ and $(M_2, \varphi_2)$ be any von Neumann algebras with separable predual endowed with faithful normal states such that $\dim M_1 \geq 2$ and $\dim M_2 \geq 3$. Then the free product von Neumann algebra $(M, \varphi) = (M_1, \varphi_1) \ast (M_2, \varphi_2)$ has no Cartan subalgebra.  
\end{letterthm}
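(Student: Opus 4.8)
The plan is to reduce the type III situation to the known tracial case via the continuous core, following the strategy of passing from $M$ to $M \rtimes_{\sigma^\varphi} \R$ and using the structure of its continuous core. Concretely, write $(M,\varphi) = (M_1,\varphi_1) \ast (M_2,\varphi_2)$ and let $c(M) = M \rtimes_{\sigma^\varphi} \R$ be the continuous core, a semifinite von Neumann algebra carrying a trace $\Tr$. The key structural input is that the continuous core of a free product decomposes: there is an amalgamated free product decomposition $c(M) = c(M_1) \ast_{\rL(\R)} c(M_2)$, where $\rL(\R) \cong \rL^\infty(\R)$ is amenable (indeed abelian). Suppose toward a contradiction that $A \subset M$ is a Cartan subalgebra with faithful normal conditional expectation $E_A$. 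Then $A \rtimes_{\sigma^\varphi|_A} \R$ (one must check $\sigma^\varphi$ restricts, using that $E_A$ intertwines the modular data, so $A$ is globally invariant under $\sigma^\varphi$) sits inside $c(M)$ as a Cartan subalgebra of the semifinite factor $c(M)$, or at least $\rL(\R) \vee A$ is a maximal abelian subalgebra whose normalizer generates $c(M)$.

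The heart of the argument is then a dichotomy for this maximal abelian subalgebra $\mathcal A := A \rtimes \R \subset c(M)$ coming from the amalgamated free product structure, in the spirit of the malleable deformation / spectral gap results of \cite{PV11, houdayer-vaes, Io12a} adapted to the semifinite AFP $c(M_1) \ast_{\rL(\R)} c(M_2)$. Since $c(M_i)$ are semifinite and the amalgam $\rL(\R)$ is amenable, one gets: either $\mathcal A$ embeds into $c(M_i)$ inside $c(M)$ (in Popa's intertwining sense, relative to the trace), for some $i$, or the normalizer $\cN_{c(M)}(\mathcal A)\dpr$ is amenable relative to $\rL(\R)$ inside $c(M)$. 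In the second case, since $\cN_{c(M)}(\mathcal A)$ generates $c(M)$ and $\rL(\R)$ is amenable, $c(M)$ itself would be amenable, hence $M$ would be amenable, contradicting the hypotheses $\dim M_1 \geq 2$, $\dim M_2 \geq 3$ (which, together with $\varphi_i$ being states, force $M$ to be a nonamenable factor — this nonamenability should be recorded as a preliminary observation, e.g. from the free product computation or because a nonamenable free product contains a copy of $\rL(\F_2)$-like structure). So we are in the first case: $\mathcal A \preceq_{c(M)} c(M_i)$ for some $i$.

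From $\mathcal A \preceq_{c(M)} c(M_i)$ one wants to deduce that $A \preceq_M M_i$ inside $M$; this is the point where one descends from the core back to $M$, using that intertwining of the cores $\mathcal A \preceq c(M_i)$, together with equivariance under the dual $\R$-action (the trace-scaling action), yields intertwining downstairs — this kind of "intertwining descends to the original algebra" lemma is standard in the nontracial deformation/rigidity literature and I would cite or quote it. Once $A \preceq_M M_i$, since $A$ is abelian with expectation, one runs the second half of the usual argument: the normalizer $\cN_M(A)\dpr = M$ combined with $A \preceq M_i$ would force, via the free product structure (a quasi-normalizer / "no embedding" lemma for free products, e.g. that $M_i$ is not "normalized" by elements coming from the other side unless one side is trivial), a contradiction with $\dim M_2 \geq 3$; roughly, $A \preceq M_i$ and $\cN_M(A)'' = M$ would imply $M \preceq M_i$, impossible for a genuine free product with both factors nontrivial and the second of dimension at least $3$.

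The main obstacle I expect is the first dichotomy step: establishing the relative amenability / intertwining alternative for the maximal abelian subalgebra $\mathcal A$ inside the \emph{semifinite} amalgamated free product $c(M_1) \ast_{\rL(\R)} c(M_2)$, with $\rL(\R)$ amenable but the ambient algebra only semifinite (not finite). This requires a semifinite version of the Popa--Vaes / Ioana deformation-spectral-gap machinery over an amenable amalgam, together with care about the non-$\sigma$-finite-trace bookkeeping; the descent lemma from $c(M)$ to $M$ and the final free-product "absorption" contradiction are comparatively routine by comparison, though they too must be phrased in the type III setting where one only has states rather than traces.
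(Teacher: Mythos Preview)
Your overall architecture---pass to the continuous core, exploit the semifinite AFP decomposition $\core(M)=\core(M_1)\ast_{\rL(\R)}\core(M_2)$, run a Popa--Vaes style dichotomy on the Cartan $\core(A)$, and descend---matches the paper. However, two of your steps are genuine gaps, and they are precisely where the paper does its real work.

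\textbf{The amenable branch.} You state the dichotomy as ``either $\mathcal A\preceq \core(M_i)$ or $\cN_{\core(M)}(\mathcal A)''$ is amenable relative to $\rL(\R)$.'' That is not what the available tools give. Popa--Vaes \cite{PV11} is applied to the crossed-product structure $\widetilde{\mathcal M}=\mathcal N\rtimes\F_2$, and the amenable branch yields that $\alpha_t(\core(M))$ is amenable relative to the large algebra $\mathcal N=\bigvee_{g\in\F_2}u_g\core(M)u_g^*$, \emph{not} relative to $\rL(\R)$. Since $\mathcal N$ is itself a huge (nonamenable) free product, this does not make $\core(M)$ amenable. Bridging this gap is the content of the paper's Theorem~\ref{dichotomy-amenability}: if $\alpha_t(q\mathcal Mq)$ is amenable relative to $\mathcal N$, then \emph{both} $\core(M_1)$ and $\core(M_2)$ have amenable direct summands. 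To use this, the paper first reduces (via \cite{ueda-advances} and a corner computation) to the case where $M_1$ has no amenable direct summand, and handles the case where both $M_i$ are amenable separately by invoking \cite{houdayer-ricard}. You skip this entire reduction and the corresponding theorem.

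\textbf{The intertwining endpoint and the descent.} In the intertwining branch the paper does not stop at $\core(A)\preceq \core(M_i)$; via Theorem~\ref{dichotomy-intertwining} and Proposition~\ref{intertwining-afp} it pushes all the way to $\core(A)\preceq q\rL(\R)q$, the amalgam itself. Proposition~\ref{intertwining-core} then descends this to $A\preceq_M \C$, which immediately contradicts diffuseness of $A$. Your proposed descent to ``$A\preceq_M M_i$'' is problematic: $M_i$ may be type~III, so Popa's intertwining $A\preceq_M M_i$ is not even defined in the paper's framework (Definition~\ref{intertwining-general-definition} requires a finite target). Even granting some meaning to it, your final step ``$\cN_M(A)''=M$ forces $M\preceq M_i$, impossible'' requires a type~III analogue of the control-of-normalizers/no-embedding lemma that you do not supply and that is not in the paper. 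The correct endpoint $\rL(\R)$, being abelian, sidesteps all of this.
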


Observe that when $\dim M_1 = \dim M_2 = 2$, the free product $M = M_1 \ast M_2$ is hyperfinite by \cite[Theorem 1.1]{Dy92} and so has a Cartan subalgebra. Note that the questions of factoriality, type classification and fullness for arbitrary free product von Neumann algebras were recently 
settled in \cite{ueda-advances}. These results are used in the proof of Theorem \ref{thmA}.

We next investigate more generally Cartan subalgebras in nontracial AFP von Neumann algebras $M = M_1 \ast_B M_2$ over an amenable von Neumann subalgebra $B$. Even though we do not get a complete solution in that setting, our second result shows that, under fairly general assumptions, any Cartan subalgebra $A \subset M$ can be embedded into $B$ inside $M$, in the sense of Popa's intertwining techniques. We refer to Section \ref{preliminairies} for more information on these intertwining techniques and the notation $A \preceq_M B$. Recall from \cite[Definition 5.1]{houdayer-vaes} that an inclusion of von Neumann algebras $P \subset M$ has no {\em trivial corner} if for all nonzero projections $p \in P' \cap M$, we have $Pp \neq pMp$.

\begin{letterthm}\label{thmB}
For $i \in \{ 1, 2 \}$, let $B \subset M_i$ be any inclusion of von Neumann algebras with separable predual and with faithful normal conditional expectation $E_i : M_i \to B$. Let $(M, E) = (M_1, E_1) \ast_B (M_2, E_2)$ be the corresponding amalgamated free product von Neumann algebra. Assume that $B$ is a finite amenable von Neumann algebra.

Assume moreover that:
\begin{itemize}
\item Either both $M_1$ and $M_2$ have no amenable direct summand.
\item Or $B$ is of finite type ${\rm I}$, $M_1$ has no amenable direct summand and the inclusion $B \subset M_2$ has no trivial corner.
\end{itemize}
If $A \subset M$ is a Cartan subalgebra, then $A \preceq_M B$.
\end{letterthm}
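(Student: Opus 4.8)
The strategy is to combine Popa's intertwining-by-bimodules with a relative amenability dichotomy for amalgamated free products, run in the non-tracial setting. Since $A$ is abelian it is amenable, and hence amenable relative to $B$ inside $M$ (in the sense appropriate to von Neumann algebras equipped with normal conditional expectations); in particular it is amenable relative to both $M_1$ and $M_2$. First I would apply the amalgamated free product dichotomy --- the type~${\rm III}$ analogue of the Popa--Vaes theorem from \cite{PV11}, developed for non-tracial algebras via the continuous core in \cite{houdayer-vaes} and valid under either set of hypotheses in the statement --- to the inclusion $A \subset M$: either $A \preceq_M M_i$ for some $i \in \{1,2\}$, or the normalizer $\mathcal{N}_M(A)''$ is amenable relative to $B$. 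Since $A$ is a Cartan subalgebra, $\mathcal{N}_M(A)'' = M$. But $M$ cannot be amenable relative to the amenable algebra $B$: this would force $M$ itself to be amenable, whence $M_1 = E_{M_1}(M)$, being the range of a faithful normal conditional expectation, would be amenable, contradicting the assumption that $M_1$ has no amenable direct summand. So we are reduced to $A \preceq_M M_i$; by symmetry in the first case, and directly in the second, assume $i = 1$ (the case $i = 2$ being handled identically, with $M_1$ and $M_2$ interchanged).

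The second and decisive step is to upgrade $A \preceq_M M_1$ to $A \preceq_M B$. Suppose towards a contradiction that $A \not\preceq_M B$. After transporting a corner of $A$ to a subalgebra $A_1$ of a corner of $M_1$, transitivity of intertwining (and the fact that $B \subset M_1$ is with expectation) shows $A_1 \not\preceq_{M_1} B$. Since $A$ is regular in $M$, the control of (quasi-)normalizers inside amalgamated free products --- the non-tracial counterpart of the results used in the tracial case in \cite{Io12a} --- then yields $M = \mathcal{N}_M(A)'' \preceq_M M_1$. I claim this is impossible. Indeed, in both sets of hypotheses one has $M_2 \supsetneq B$: in the first case because $M_2$ has no amenable direct summand while $B$ is amenable, and in the second because $B \subset M_2$ has no trivial corner (otherwise the projection $1$ would exhibit $M_2 = B$). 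Consequently there is a sequence of unitaries $u_n \in \mathcal{U}(M_2)$ with $E_2(u_n) \to 0$ $\sigma$-weakly, and freeness of $M_1$ and $M_2$ over $B$ then gives $E_{M_1}(x\, u_n\, y) \to 0$ in $\|\cdot\|_2$ for all $x, y \in M$, which by Popa's criterion contradicts $M \preceq_M M_1$. Hence $A \preceq_M B$, as desired; in the case $i = 2$ one argues in the same way, using $M_1 \supsetneq B$, which holds in both hypotheses since $M_1$ is non-amenable.

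The principal obstacle is to run this scheme genuinely in the type~${\rm III}$ regime rather than the tracial one. Concretely, one must have Popa's intertwining theory, the notion of relative amenability, and the control-of-normalizers theorems available for inclusions with normal conditional expectations instead of traces; the standard device is to pass to the continuous core $M \rtimes_{\sigma^\varphi} \R = (M_1 \rtimes_{\sigma^\varphi} \R) \ast_{B \rtimes_{\sigma^\varphi} \R} (M_2 \rtimes_{\sigma^\varphi} \R)$, where $B$ becomes a semifinite \emph{amenable} subalgebra, so that the deformation/rigidity machinery of \cite{PV11, houdayer-vaes} applies, and then to descend the conclusions back to $M$ (using that $A$ Cartan in $M$ behaves well with respect to taking cores). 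Making the various intertwining manipulations rigorous in this setting --- transitivity of $\preceq$, compatibility of $\preceq_M B$ with passage to corners inside $M_i$, and the behaviour of quasi-normalizers --- together with a careful bookkeeping of the modular flow, is where the real work lies; the two alternative hypotheses in the statement are calibrated precisely so that both the relative non-amenability of $M$ over $B$ and the failure of $M \preceq_M M_i$ persist, and so that the AFP dichotomy and normalizer-control inputs remain applicable.
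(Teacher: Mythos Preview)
Your overall architecture---pass to the continuous core, run a dichotomy coming from \cite{PV11}, rule out the relative-amenability branch using non-amenability of $M_1$, and control the intertwining branch via normalizer arguments---is exactly the shape of the paper's proof. The paper packages the core-level argument as a single semifinite statement (Theorem~\ref{general-semifinite}), then descends via Proposition~\ref{intertwining-core}; your last paragraph correctly anticipates this.

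Where your write-up diverges is in the \emph{form} of the dichotomy you invoke. You cite a statement ``either $A\preceq_M M_i$ for some $i$, or $\mathcal N_M(A)''$ is amenable relative to $B$'' and attribute it to \cite{PV11,houdayer-vaes}. Neither reference contains this; \cite[Theorem~1.6]{PV11} is a statement about $\F_n$-crossed products, not about AFPs. The bridge between the two is precisely Ioana's malleable deformation: one embeds $\mathcal M$ into $\widetilde{\mathcal M}=\mathcal M\ast_{\mathcal B}(\mathcal B\ovt\rL(\F_2))=\mathcal N\rtimes\F_2$, applies \cite[Theorem~1.6]{PV11} to $\alpha_t(\mathcal A)$ inside this $\F_2$-crossed product, and then translates each alternative back to $\mathcal M$ via the semifinite analogues of Ioana's results (Theorems~\ref{dichotomy-intertwining} and~\ref{dichotomy-amenability} here). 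The conclusion one actually gets from the intertwining branch is not ``$\mathcal A\preceq\mathcal M_i$'' but directly ``$\mathcal A\preceq q\mathcal Bq$ or $\mathcal N_{p\mathcal Mp}(\mathcal A)''\preceq q_i\mathcal M_iq_i$'' (Theorem~\ref{dichotomy-intertwining}); since $\mathcal A$ is Cartan, the second option reads $p\mathcal Mp\preceq\mathcal M_i$, which is then excluded by Proposition~\ref{intertwining-afp}. So the paper never passes through your intermediate step $A\preceq_M M_i$ at all.

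This matters because your step ``upgrade $A\preceq_M M_1$ to $A\preceq_M B$'' (essentially Proposition~\ref{intertwining-afp-bis}) and your step ``$M\preceq_M M_1$ is impossible'' become redundant once one has Theorem~\ref{dichotomy-intertwining}. Your argument for the latter is also somewhat loose: the existence of unitaries $u_n\in\mathcal U(M_2)$ with $E_2(u_n)\to 0$ does not follow merely from $M_2\supsetneq B$, and even granting it, concluding $E_{M_1}(xu_ny)\to 0$ for \emph{all} $x,y\in M$ requires care with the word decomposition (compare the proof of Proposition~\ref{intertwining-afp}, which argues differently). None of this is fatal---your route can be made to work, and indeed later papers package an AFP dichotomy closer to the one you state---but as written you are invoking a black box that does not exist in the cited sources, and the actual content of that black box (the $\alpha_t$-deformation and Theorems~\ref{dichotomy-intertwining}, \ref{dichotomy-amenability}) is where the paper's work lies.
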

A similar result was obtained for tracial AFP von Neumann algebras in \cite[Theorem 1.3]{Io12a}.

The first examples of type ${\rm III}$ factors with unique Cartan decomposition were recently obtained in \cite{houdayer-vaes}. Namely, it was shown that any nonamenable nonsingular free ergodic action $\Gamma \curvearrowright (X, \mu)$ of a Gromov hyperbolic group on a standard measure space gives rise to a factor $\rL^\infty(X) \rtimes \Gamma$ with unique Cartan decomposition, up to unitary conjugacy. This generalized the probability measure preserving case that was established in \cite{PV12}.

In order to state our next results, we need to introduce some terminology. Let $\mathcal R$ be a countable nonsingular equivalence relation on a standard measure space $(X, \mu)$ and denote by $\rL(\mathcal R)$ the von Neumann algebra of the equivalence relation $\mathcal R$ (\cite{feldman-moore}). Following \cite[Definition 2.1]{adams}, we say that $\mathcal R$ is {\em recurrent} if for all measurable subsets $\mathcal U \subset X$ such that $\mu(\mathcal U) > 0$, the set $[x]_{\mathcal R} \cap \mathcal U$ is infinite for almost every $x \in \mathcal U$. This is equivalent to saying that $\rL(\cR)$ has no type ${\rm I}$ direct summand. We then say that a nonsingular action $\Gamma \curvearrowright (X, \mu)$ of a countable discrete group on a standard measure space  is {\em recurrent} if the corresponding orbit equivalence relation $\mathcal R(\Gamma \curvearrowright X)$ is recurrent.

Our next result provides a new class of type ${\rm III}$ factors with unique Cartan decomposition, up to unitary conjugacy. These factors arise from countable nonsingular ergodic equivalence relations $\mathcal R$ which split as a free product $\mathcal R = \mathcal R_1 \ast \mathcal R_2$ of arbitrary recurrent subequivalence relations. We refer to \cite[Definition IV.6]{Ga99} for the notion of {\em free product} of countable nonsingular equivalence relations.

\begin{letterthm}\label{thmC}
Let $\mathcal R$ be any countable nonsingular ergodic equivalence relation on a standard measure space $(X, \mu)$ which splits as a free product $\mathcal R = \mathcal R_1 \ast \mathcal R_2$ such that the subequivalence relation $\mathcal R_i$ is recurrent for all $i \in \{1, 2\}$.

Then the nonamenable factor $\rL(\mathcal R)$ has $\rL^\infty(X)$ as its unique Cartan subalgebra, up to unitary conjugacy. In particular, for any nonsingular ergodic equivalence relation $\mathcal S$ on a standard measure space $(Y, \eta)$ such that $\rL(\mathcal R) \cong \rL(\mathcal S)$, we have $\mathcal R \cong \mathcal S$.
\end{letterthm}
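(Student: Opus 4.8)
The plan is to realise $M := \rL(\cR)$ as an amalgamated free product over $B := \rL^\infty(X)$ and to deduce the statement from Theorem~\ref{thmB}. By \cite{feldman-moore} and \cite[Definition~IV.6]{Ga99} the splitting $\cR = \cR_1 \ast \cR_2$ yields a canonical identification $(M, E_B) = (\rL(\cR_1), E_1) \ast_B (\rL(\cR_2), E_2)$, where $E_i \colon \rL(\cR_i) \to B$ is the canonical conditional expectation; here $B = \rL^\infty(X)$ is abelian, hence a finite amenable von Neumann algebra of finite type~${\rm I}$, it is a Cartan subalgebra of $M$, and $M$ is a factor since $\cR$ is ergodic. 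By the standard intertwining criterion for Cartan subalgebras (see e.g.\ \cite{PV12, Io12a}), it suffices to prove that \emph{every} Cartan subalgebra $A \subseteq M$ satisfies $A \preceq_M B$: two Cartan subalgebras of $M$, one of which embeds into the other, are automatically unitarily conjugate in $M$. The final ``in particular'' then follows formally from \cite{feldman-moore}, since an isomorphism $\rL(\cR) \cong \rL(\cS)$ carries the Cartan inclusion $B \subseteq M$, up to unitary conjugacy, onto $\rL^\infty(Y) \subseteq \rL(\cS)$, whence $\cR \cong \cS$.

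Before the case analysis I would record two facts. First, by \cite[Definition~2.1]{adams} recurrence of $\cR_i$ is exactly the condition that $\rL(\cR_i)$ have no type~${\rm I}$ direct summand; and since $B$ is maximal abelian in $\rL(\cR_i)$ one has $B' \cap \rL(\cR_i) = B$, so a nonzero projection there equals $1_\cU$ for some $\cU$ with $\mu(\cU) > 0$, and then $B 1_\cU = \rL^\infty(\cU) \neq 1_\cU \rL(\cR_i) 1_\cU = \rL(\cR_i|_\cU)$ because $\cR_i|_\cU$ still has infinite classes almost everywhere; hence $B \subseteq \rL(\cR_i)$ has no trivial corner for $i \in \{1,2\}$. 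Second, $M$ is nonamenable: recurrence provides in each $\rL(\cR_i)$ a unitary normalising $B$ that implements an aperiodic transformation, and freeness over $B$ makes these two unitaries generate a copy of $\rL(\F_2)$ inside $M$ (equivalently, by \cite{connes-feldman-weiss}, a nontrivial free product of two recurrent equivalence relations is never hyperfinite).

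Now the case distinction. If one of $\rL(\cR_1), \rL(\cR_2)$ --- say $\rL(\cR_1)$ --- has no amenable direct summand, then the second alternative of Theorem~\ref{thmB} applies with $M_1 = \rL(\cR_1)$ and $M_2 = \rL(\cR_2)$ ($B$ is of finite type~${\rm I}$, $M_1$ has no amenable summand, $B \subseteq M_2$ has no trivial corner by the previous paragraph), giving $A \preceq_M B$; and if both $\rL(\cR_i)$ have no amenable summand the first alternative applies. The remaining case is that \emph{both} $\rL(\cR_1)$ and $\rL(\cR_2)$ are amenable --- this genuinely occurs, e.g.\ when $\cR_1, \cR_2$ are the orbit relations of the two cyclic factors of a free Bernoulli action of $\F_2$, for which $\rL(\cR_i)$ is the hyperfinite ${\rm II_1}$ factor while $\rL(\cR)$ is nonamenable --- and here Theorem~\ref{thmB} does not apply directly. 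This is the main obstacle.

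To treat it I would rerun the deformation/rigidity argument underlying Theorem~\ref{thmB} itself, that is, the amalgamated free product analog, in the nontracial setting, of the Popa--Vaes dichotomy of \cite{PV11} and \cite{houdayer-vaes}: for the abelian, hence amenable, subalgebra $A \subseteq M$ one obtains, roughly, that either $A \preceq_M B$, or $A \preceq_M M_i$ for some $i$, or $\cN_M(A)\dpr$ is amenable relative to $M_i$ for some $i$. Since $A$ is Cartan, $\cN_M(A)\dpr = M$; as $M_i$ is amenable in the case at hand, the third possibility would make $M$ amenable relative to an amenable algebra, hence amenable, contradicting the previous paragraph. The second possibility is excluded by the regularity of $A$ together with the relative malnormality of $M_i$ in the amalgamated free product: $A \preceq_M M_i$ with $\cN_M(A)\dpr = M$ forces $A \preceq_M B$, exactly as in the proofs of \cite[Theorem~1.3]{Io12a} and \cite{houdayer-vaes}. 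Hence $A \preceq_M B$ in every case. I expect the delicate points to be, first, setting up this dichotomy in the type~${\rm III}$ framework, which requires passing to the continuous core and using the noncommutative flow of weights in place of a trace, and second, the deduction of $A \preceq_M B$ from $A \preceq_M M_i$ in the amalgamated setting.
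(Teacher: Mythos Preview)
Your reduction to Theorem~\ref{thmB} in the case where some $\rL(\cR_i)$ has no amenable direct summand is fine, as is the observation that recurrence of $\cR_i$ means $B \subset \rL(\cR_i)$ has no trivial corner. The problem is the ``remaining'' case.

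First, a logical gap: the negation of ``some $\rL(\cR_i)$ has no amenable direct summand'' is ``each $\rL(\cR_i)$ has an amenable direct summand'', not ``each $\rL(\cR_i)$ is amenable''. You silently pass from the former to the latter. This can be repaired by cutting down to a subset where $\cR_1$ is nowhere amenable (so $\rL(\cR_1|\cU)$ really has no amenable direct summand), but one then needs to know that $\cR|\cU$ is again a free product with the right properties; this is where the paper invokes Alvarez's Kurosh-type theorem \cite{Al08}, which you do not mention.

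Second, and more seriously, the trichotomy you invoke in the genuinely ``both amenable'' case does not exist in the form you state. The Popa--Vaes/Ioana machinery applied to an AFP over $B$ gives, in the relative amenability branch, that \emph{each $M_i$ has a direct summand amenable relative to $B$} (this is exactly the content of Theorem~\ref{dichotomy-amenability}, strengthening \cite[Theorem~4.1]{Io12a}); it does \emph{not} give that $\cN_M(A)''=M$ is amenable relative to $M_i$. When both $M_i$ are already amenable that conclusion is vacuous, so your contradiction with nonamenability of $M$ never fires. In short, the deformation/rigidity argument underlying Theorem~\ref{thmB} simply does not close when both factors are amenable.

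The paper's way out is specific to equivalence relations and is the idea you are missing. One first passes to the continuous core so that the Maharam extensions $\core(\cR_i)$ are type~${\rm II}$ and the problem becomes tracial; cutting by a finite-trace projection in $\core(B)$ and applying \cite{Al08} yields a pmp free product $\cS_1\ast\cS_2$ with both $\cS_i$ of type~${\rm II_1}$. In the ``both amenable'' subcase one then uses Connes--Feldman--Weiss: an amenable type~${\rm II_1}$ equivalence relation is the orbit relation of a single free pmp $\Z$-action, so $\cS_1\ast\cS_2$ is the orbit relation of a free pmp action of $\F_2$, i.e.\ $q\,\core(M)\,q \cong \rL^\infty(\cU)\rtimes\F_2$, and \cite[Theorem~1.6]{PV11} applies \emph{directly} (as a crossed product by $\F_2$, not as an AFP dichotomy). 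This is Theorem~\ref{disintegration} in the paper. The nonamenable subcase is handled as you suggest, after the Alvarez reduction. Proposition~\ref{intertwining-core} then transfers the intertwining back from the core to $M$.
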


Observe that Theorem \ref{thmC} generalizes \cite[Corollary 1.4]{Io12a} where the same result was obtained for countable pmp equivalence relations under additional assumptions. Note that in the case when $\mathcal R_1$ is {\em nowhere amenable}, that is, $\rL(\mathcal R_1)$ has no amenable direct summand and $\mathcal R_2$ is recurrent, Theorem \ref{thmC} is a consequence of Theorem \ref{thmB} and \cite[Theorem 2.5]{houdayer-vaes}. However, Theorem \ref{thmB} does not cover the case when both $\mathcal R_1$ and $\mathcal R_2$ are amenable. So, in the setting of von Neumann algebras arising from countable nonsingular equivalence relations, Theorem \ref{thmC} is a generalization of Theorem \ref{thmB} in the sense that we are able to remove the nonamenability assumption on $M_1 = \rL(\mathcal R_1)$.

Finally, when dealing with certain nonsingular free ergodic actions $\Gamma \curvearrowright (X, \mu)$ of amalgamated groups $\Gamma_1 \ast_\Sigma \Gamma_2$, we obtain new examples of group measure space type ${\rm III}$ factors with unique Cartan decomposition, up to unitary conjugacy.

\begin{letterthm}\label{thmD}
Let $\Gamma = \Gamma_1 \ast_\Sigma \Gamma_2$ be any amalgamated free product of countable discrete groups such that $\Sigma$ is finite and $\Gamma_i$ is infinite for all $i \in \{ 1, 2 \}$. Let $\Gamma \curvearrowright (X, \mu)$ be any nonsingular free ergodic action on a standard measure space such that for all $i \in \{ 1, 2 \}$, the restricted action $\Gamma_i \curvearrowright (X, \mu)$ is recurrent.
 
Then the group measure space factor $\rL^\infty(X) \rtimes \Gamma$ has $\rL^\infty(X)$ as its unique Cartan subalgebra, up to unitary conjugacy. 
\end{letterthm}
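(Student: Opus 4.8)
The plan is to realise $M := \rL^\infty(X)\rtimes\Gamma$ as an amalgamated free product over an amenable subalgebra and then deploy the strategy of Theorem~\ref{thmC}, afterwards upgrading its conclusion with the help of the concrete amalgam. Set $D := \rL^\infty(X)$, a Cartan subalgebra of $M$, and decompose $\Gamma = \Gamma_1\ast_\Sigma\Gamma_2$ to get $M = M_1\ast_B M_2$ with $M_i := \rL^\infty(X)\rtimes\Gamma_i$ and $B := \rL^\infty(X)\rtimes\Sigma$. Since $\Gamma\curvearrowright(X,\mu)$ is free and $\Sigma$ is finite, $\Sigma\curvearrowright(X,\mu)$ is free with finite orbits, so $B$ is of finite type~${\rm I}$ (hence finite and amenable) and $D\subset B$ is a Cartan subalgebra; moreover $M_i = \rL(\mathcal R(\Gamma_i\curvearrowright X))$, so the recurrence of $\Gamma_i\curvearrowright(X,\mu)$ is precisely the statement that $M_i$ has no type~${\rm I}$ direct summand. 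I would first record two consequences. The inclusion $B\subset M_i$ has no trivial corner for each $i$: a nonzero projection $p\in B'\cap M_i\subset D$ with $Bp = pM_ip$ would force $\mathcal R(\Gamma_i\curvearrowright X)$ to coincide, on the support of $p$, with the finite equivalence relation $\mathcal R(\Sigma\curvearrowright X)$, contradicting recurrence. And $M$ is a nonamenable factor: factoriality is ergodicity of $\Gamma\curvearrowright X$, while nonamenability follows from the amalgamated free product decomposition over the amenable $B$ together with the absence of trivial corners (see \cite{ueda-advances}).

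Next I would reduce everything to a single intertwining statement: for an arbitrary Cartan subalgebra $A\subset M$, it is enough to prove $A\preceq_M B$. Indeed $B$ is generated by $D$ together with the finitely many canonical unitaries $(u_s)_{s\in\Sigma}$, each normalising $D$, so $A\preceq_M B$ forces $A\preceq_M D$ (a routine consequence of $B$ being generated by $D$ and finitely many unitaries normalising $D$). Since $A$ and $D$ are then both Cartan subalgebras of $M$ with $A\preceq_M D$, they are unitarily conjugate in $M$ (see e.g.\ \cite{PV11, houdayer-vaes}), which is the desired conclusion.

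To prove $A\preceq_M B$ I would argue as in the proof of Theorem~\ref{thmC}, with the amalgam $\C$ there replaced by $B$ here. If $M_1$ or $M_2$ has no amenable direct summand this already follows from Theorem~\ref{thmB} --- its second alternative, valid here because $B$ is of finite type~${\rm I}$ and $B\subset M_i$ has no trivial corner for both $i$ --- so the genuinely new case is when $M_1$ and $M_2$ both have amenable direct summands. Then I would pass to the continuous core: on the one hand $\widetilde M := M\rtimes_{\sigma^\varphi}\R$ inherits an amalgamated free product decomposition $\widetilde M = \widetilde M_1\ast_{\widetilde B}\widetilde M_2$ (taking $\varphi$ adapted to $B$), with $\widetilde B$ amenable, each $\widetilde M_i$ without type~${\rm I}$ direct summand and $\widetilde B\subset\widetilde M_i$ still without trivial corner; on the other hand, choosing instead $\varphi = \tau_A\circ E_A$ (with $\tau_A$ the trace of $A$ and $E_A$ the conditional expectation) the modular flow fixes $A$ pointwise, and via the canonical identification of cores $A$ gives rise to an abelian subalgebra $\widetilde A\subset\widetilde M$ whose normaliser generates $\widetilde M$; by the reduction to the continuous core \cite{houdayer-vaes}, it suffices to prove $\widetilde A\preceq_{\widetilde M}\widetilde B$. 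For this I would equip $\widetilde M = \widetilde M_1\ast_{\widetilde B}\widetilde M_2$ with the canonical amalgamated free product malleable deformation (as in \cite{Io12a}) and run the deformation/rigidity dichotomy à la Popa--Vaes and Ioana \cite{PV11, PV12, Io12a} for the regular inclusion $\widetilde A\subset\widetilde M$ (after cutting by a finite-trace projection): after ruling out the relatively amenable alternative --- impossible since $M$ is nonamenable while $\widetilde B$ is amenable --- one arrives at $\widetilde A\preceq_{\widetilde M}\widetilde B$ or $\widetilde A\preceq_{\widetilde M}\widetilde M_i$ for some $i$. In the second case I would invoke the amalgamated free product normaliser analysis (malnormality of $\widetilde M_i$ in $\widetilde M_1\ast_{\widetilde B}\widetilde M_2$, as in \cite{Io12a}), using that $\widetilde A$ is regular and that $\widetilde B\subset\widetilde M_i$ has no trivial corner --- a consequence of recurrence --- to conclude $\widetilde A\preceq_{\widetilde M}\widetilde B$ there as well. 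Hence $A\preceq_M B$, and with the previous paragraph the theorem follows.

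The main obstacle is the amenable case above: once $M_1$ and $M_2$ both have amenable direct summands, Theorem~\ref{thmB} gives nothing, the entire deformation/rigidity machinery has to be carried out, and the recurrence hypothesis --- the absence of a type~${\rm I}$ direct summand in $M_i$, and in particular the absence of trivial corners $B\subset M_i$ --- has to serve in place of the nonamenability assumption both when making the amalgamated free product deformation spread and when closing the dichotomy; arranging this is exactly the additional content of Theorem~\ref{thmC} that must be transplanted to a nontrivial amalgam. A secondary but unavoidable complication is that $M$ is of type~${\rm III}$ in general, so every intertwining and relative amenability argument has to be run inside the continuous core together with the noncommutative flow of weights, following \cite{houdayer-vaes}.
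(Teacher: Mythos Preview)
Your reduction to proving $A\preceq_M B$ and the handling of the case where some $M_i$ has no amenable direct summand via Theorem~\ref{thmB} are fine, as is the passage from $A\preceq_M B=\rL^\infty(X)\rtimes\Sigma$ to $A\preceq_M \rL^\infty(X)$. The gap is in the ``genuinely new case'' where both $M_1$ and $M_2$ have amenable direct summands. There your plan is to run the malleable deformation on $\widetilde M=\widetilde M_1\ast_{\widetilde B}\widetilde M_2$ and dismiss the relatively amenable branch of the Popa--Vaes dichotomy as ``impossible since $M$ is nonamenable while $\widetilde B$ is amenable''. That dismissal does not work. The amenable alternative in \cite[Theorem~1.6]{PV11} is amenability of $\alpha_t(q\widetilde M q)$ relative to $q\mathcal N q$, where $\mathcal N=\bigvee_{g\in\F_2}u_g\widetilde M u_g^*$, not relative to $\widetilde B$; the only tool in this paper that converts this into a usable contradiction is Theorem~\ref{dichotomy-amenability}, whose conclusion is precisely that each $\widetilde M_i$ has an amenable direct summand---which is exactly the case you are in, so no contradiction arises. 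Upgrading relative amenability over $\mathcal N$ to relative amenability over $\widetilde B$ (and hence to amenability of $\widetilde M$) would require the spectral gap argument of \cite[Theorem~5.2]{Io12a}, but that needs the ambient algebra not to have property Gamma, and the continuous core $\core(M)$ is in general not full even when $M$ is (this is stated explicitly in the introduction). The analogy with Theorem~\ref{thmC} does not help either: in the proof of Theorem~\ref{disintegration}, the both-amenable case is handled by noting that each $\mathcal R_i$, being hyperfinite and of type~${\rm II_1}$, is generated by a single $\Z$-action, so $\mathcal R_1\ast\mathcal R_2$ comes from a free $\F_2$-action and \cite[Theorem~1.6]{PV11} applies directly; this trick is specific to free products and does not transplant to an amalgam over a nontrivial $\Sigma$.

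The paper's proof takes an entirely different route that sidesteps this obstruction. It does not use the von Neumann algebraic AFP decomposition $M_1\ast_B M_2$ at all, but works instead with the crossed product structure $\core(M)=\core(B)\rtimes\Gamma$, $\core(B)=\rL^\infty(X\times\R)$, and exploits the group decomposition $\Gamma=\Gamma_1\ast_\Sigma\Gamma_2$ via a \emph{dual coaction}. One quotients out the finite normal core $\Sigma_0=\bigcap_{g}g\Sigma g^{-1}$ to get $\Lambda=\Gamma/\Sigma_0=\Lambda_1\ast_\Upsilon\Lambda_2$ with trivial normal core, so that $\rL(\Lambda)$ is a ${\rm II_1}$ factor without property Gamma. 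The dual coaction $\Delta_\rho:\mathcal M\to\mathcal M\ovt\rL(\Lambda)$ then transports the problem to $\mathcal M\ovt\rL(\widetilde\Lambda)$, where the malleable deformation lives purely on the $\rL(\widetilde\Lambda)$ tensor factor. Now the Popa--Vaes dichotomy is applied there, and the amenable branch is ruled out by Ioana's spectral gap result \cite[Theorem~5.2]{Io12a}, which is available precisely because $\rL(\Lambda)$ has no property Gamma---regardless of whether $M_1$ or $M_2$ is amenable. The non-amenable branch is then closed via \cite[Theorem~3.2]{Io12a} and an intertwining Claim comparing subsets of $\Gamma$ small relative to $\{\Gamma_1,\Gamma_2\}$ with subsets of $\Lambda$ small relative to $\{\Lambda_1,\Lambda_2\}$. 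This is the content of Theorem~\ref{gms-semifinite}, from which Theorem~\ref{thmD} follows immediately via Proposition~\ref{intertwining-core}.
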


Observe that Theorem \ref{thmD} generalizes the probability measure preserving case that was established in \cite[Theorem 1.1]{Io12a}.

In the spirit of \cite[Corollary B]{houdayer-vaes}, we obtain the following interesting consequence. Let $\Gamma = \Gamma_1 \ast \Gamma_2$ be an arbitrary free product group such that $\Gamma_1$ is amenable and infinite and $|\Gamma_2| \geq 2$. Then we get group measure space factors of the form $\rL^\infty(X) \rtimes \Gamma$ with unique Cartan decomposition, having any possible type and with any possible flow of weights in the type ${\rm III_0}$ case.

We finally mention that, unlike the probability measure preserving case \cite[Theorem 1.1]{Io12a},  the assumption of recurrence of the action $\Gamma_i \curvearrowright (X, \mu)$ for all $i \in \{1, 2\}$ is necessary. Indeed, using \cite{SV11}, we exhibit in Section \ref{non-uniqueness} a class of nonamenable infinite measure preserving free ergodic actions $\Gamma \curvearrowright (X, \mu)$ of free product groups $\Gamma = \Gamma_1 \ast \Gamma_2$ such that the corresponding type ${\rm II_\infty}$ group measure space factor $\rL^\infty(X) \rtimes \Gamma$ has uncountably many non conjugate Cartan subalgebras.

\subsection*{Comments on the proofs}
As we already mentioned above, the proofs of our main results rely heavily on results and techniques from \cite{PV11, houdayer-vaes, Io12a}. Let us describe below the main three ingredients which are needed. We will mainly focus on the proof of Theorem \ref{thmA}.

Denote by $(M, \varphi) = (M_1, \varphi_1) \ast (M_2, \varphi_2)$ an arbitrary free product of von Neumann algebras as in Theorem \ref{thmA}. For simplicity, we may assume that $M$ is a factor. In the case when both $M_1$ and $M_2$ are amenable, $M$ is already known to have no Cartan subalgebra by \cite[Theorem 5.5]{houdayer-ricard}. So we may assume that $M_1$ is not amenable. Using \cite{Dy92, ueda-advances}, we may further assume that $M_1$ has no amenable direct summand and $M_2 \neq \C$. By contradiction, assume that $A \subset M$ is a Cartan subalgebra.

We first use Connes-Takesaki's {\em noncommutative flow of weights} \cite{connes73, connestak, Ta03} in order to work inside the semifinite von Neumann algebra $\core(M)$ which is the {\em continuous core} of $M$. We obtain a canonical decomposition of $\core(M)$ as the semifinite amalgamated free product von Neumann algebra $\core(M) = \core(M_1) \ast_{\rL(\R)} \core(M_2)$. Moreover $\core(A) \subset \core(M)$ is a Cartan subalgebra.

Next, we use Popa's intertwining techniques in the setting of nontracial von Neumann algebras that were developed in \cite[Section 2]{houdayer-vaes}. Since $A$ is diffuse, we show that necessarily $\core(A) \npreceq_{\core(M)} \rL(\R)$ (see Proposition \ref{intertwining-core}).

Finally, we extend Ioana's techniques from \cite[Sections 3,4]{Io12a} to {\em semifinite} AFP von Neumann algebras (see Theorems \ref{dichotomy-intertwining} and \ref{dichotomy-amenability}). The major difference though between our approach and Ioana's approach is that we cannot use the spectral gap techniques from \cite[Section 5]{Io12a}. The main reason why Ioana's approach cannot work here is that $\core(M)$ is not full in general even though $M$ is a full factor. Instead, we strengthen \cite[Theorem 4.1]{Io12a} in the following way. We show that the presence of the Cartan subalgebra $\core(A) \subset \core(M)$ which satisfies $\core(A) \npreceq_{\core(M)} \rL(\R)$ forces {\em both} $\core(M_1)$ and $\core(M_2)$ to have an amenable direct summand. Therefore, both $M_1$ and $M_2$ have an amenable direct summand as well. Since we assumed that $M_1$ had no amenable direct summand, this is a contradiction.

\subsection*{Acknowledgments} We are grateful to Adrian Ioana and Stefaan Vaes for their useful comments regarding a first draft of this paper. We also thank the referee for carefully reading the paper.

\tableofcontents

\section{Preliminaries}\label{preliminairies}

Since we want the paper to be as self contained as possible, we recall in this section all the necessary background that will be needed for the proofs of the main results.

\subsection{Intertwining techniques}

All the von Neumann algebras that we consider in this paper are always assumed to be $\sigma$-finite. Let $M$ be a von Neumann algebra. We say that a von Neumann subalgebra $P \subset 1_P M1_P$ is with {\em expectation} if there exists a faithful normal conditional expectation $E_P : 1_P M 1_P \to P$. Whenever $\mathcal V \subset M$ is a linear subspace, we denote by $\Ball(\mathcal V)$ the {\em unit ball} of $\mathcal V$ with respect to the uniform norm $\|\cdot\|_\infty$. We will sometimes say that a von Neumann algebra $(M, \tau)$ is {\em tracial} if $M$ is endowed with a faithful normal tracial state $\tau$.

In \cite{Po01, popa-malleable1}, Popa discovered the following powerful method to unitarily conjugate subalgebras of a finite von Neumann algebra. Let $M$ be a finite von Neumann algebra and $A\subset 1_A M 1_A$, $B \subset 1_B M 1_B$ von Neumann subalgebras. By \cite[Corollary 2.3]{popa-malleable1} and \cite[Theorem A.1]{Po01}, the following statements are equivalent:
\begin{itemize}
\item There exist projections $p \in A$ and $q \in B$, a nonzero partial isometry $v \in p M q$ and a unital normal $\ast$-homomorphism $\varphi : p A p \to q B q$ such that $a v = v \varphi(a)$ for all $a \in A$.

\item There exist $n \geq 1$, a possibly nonunital normal $\ast$-homomorphism $\pi : A \to \mathbf M_n(B)$ and a nonzero partial isometry $v \in \mathbf M_{1, n}(1_A M 1_B)$ such that $a v = v \pi(a)$ for all $a \in A$.

\item There is no net of unitaries $(w_k)$ in $\mathcal U(A)$ such that $E_B(x^* w_k y) \to 0$ $\ast$-strongly for all $x, y \in 1_A M 1_B$.
\end{itemize}
If one of the previous equivalent conditions is satisfied, we say that $A$ {\it embeds into} $B$ {\it inside} $M$ and write $A \preceq_M B$.

We will need the following generalization of Popa's Intertwining Theorem which was proven in \cite[Theorems 2.3, 2.5]{houdayer-vaes}. A further generalization can also be found in \cite[Proposition 3.1]{ueda12}.

\begin{theo}\label{intertwining-general}
Let $M$ be any von Neumann algebra. Let $A \subset 1_A M 1_A$ and $B \subset 1_B M 1_B$ be von Neumann subalgebras such that $B$ is finite and with expectation $E_B : 1_B M 1_B \to B$. The following are equivalent.
\begin{enumerate}
\item There exist $n \geq 1$, a possibly nonunital normal $\ast$-homomorphism $\pi : A \to \mathbf M_n(B)$ and a nonzero partial isometry $v \in \mathbf M_{1, n}(1_A M 1_B)$ such that $a v = v \pi(a)$ for all $a \in A$.

\item There is no net of unitaries $(w_k)$ in $\mathcal U(A)$ such that $E_B(x^* w_k y) \to 0$ $\ast$-strongly for all $x, y \in 1_A M 1_B$.
\end{enumerate}
Moreover, when $M$ is a factor and $A, B \subset M$ are both Cartan subalgebras, the previous conditions are equivalent with the following:
\begin{enumerate}
\item [(3)] There exists a unitary $u \in \mathcal U(A)$ such that $u A u^* = B$.
\end{enumerate}
\end{theo}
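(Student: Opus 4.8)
The plan is to prove $(1)\Leftrightarrow(2)$ in general, and then, under the extra factoriality/Cartan hypotheses, to prove $(1)\Leftrightarrow(3)$; combined with $(1)\Leftrightarrow(2)$ this gives the statement. \textbf{Direction $(1)\Rightarrow(2)$.} This is elementary and uses no extra assumption. Fix a faithful normal tracial state $\tau_B$ on the finite von Neumann algebra $B$, equip $\mathbf M_n(B)$ with the trace $\Tr_n=\operatorname{tr}_n\otimes\tau_B$, and let $E_B^{(n)}$ be the amplification of $E_B$. Given $(n,\pi,v)$ as in $(1)$, suppose toward a contradiction that a net $(w_k)$ as described in $(2)$ exists. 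Writing $q_0=\pi(1_A)$ and using normality of $\pi$ one gets $v\pi(1_A)=v$, hence $v^{*}v=(v^{*}v)q_0$, and each $\pi(w_k)$ is a unitary of $q_0\mathbf M_n(B)q_0$. From $w_kv=v\pi(w_k)$ we get $v^{*}w_kv=(v^{*}v)\pi(w_k)$, so
\begin{equation*}
E_B^{(n)}(v^{*}w_kv)=E_B^{(n)}(v^{*}v)\,\pi(w_k).
\end{equation*}
The entries of the left-hand side are $E_B(v_i^{*}w_kv_j)\to0$ $\ast$-strongly by hypothesis, so $\|E_B^{(n)}(v^{*}w_kv)\|_{2,\Tr_n}\to0$; but right multiplication by the unitary $\pi(w_k)$ of $q_0\mathbf M_n(B)q_0$ preserves $\|\cdot\|_{2,\Tr_n}$ on $\mathbf M_n(B)q_0$, so $\|E_B^{(n)}(v^{*}v)\pi(w_k)\|_{2,\Tr_n}=\|E_B^{(n)}(v^{*}v)\|_{2,\Tr_n}$ is a strictly positive constant (nonzero because $v\neq0$, $v^{*}v\geq0$ and $E_B$ is faithful). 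This contradiction proves $(1)\Rightarrow(2)$.

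\textbf{Direction $(2)\Rightarrow(1)$: the main point.} The obstacle here is that $M$ and $A$ need not be finite, so Popa's original argument inside $\rL^2(M)$ and the Jones basic construction $\langle M,e_B\rangle$ is unavailable; the finiteness of $B$ is what we exploit instead. For notational ease take $1_A=1_B=1$ (the general case requires only cosmetic changes). Fix a faithful normal tracial state $\tau_B$ on $B$, set $\omega=\tau_B\circ E_B$, and let $\mathcal{H}$ be the GNS Hilbert space of $\omega$, regarded as a right Hilbert $B$-module via $\langle x,y\rangle_B=E_B(x^{*}y)$. Left multiplication gives a normal faithful representation $\lambda\colon M\to\mathcal{L}_B(\mathcal{H})$, the target being the commutant of the right $B$-action; because $B$ is finite, $\mathcal{L}_B(\mathcal{H})$ is semifinite and carries a canonical normal semifinite trace $\Tr$ determined by $\tau_B$. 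Let $e\in\mathcal{L}_B(\mathcal{H})$ be the orthogonal projection onto the closure of $B$ in $\mathcal{H}$; then $\Tr(e)=\tau_B(1)<\infty$ and, for every $z\in M$, $e\lambda(z)e$ is left multiplication by $E_B(z)$ on $\overline{B}\cong\rL^2(B,\tau_B)$.

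Now assume $(2)$. A routine net argument (index by pairs $(F,\eps)$; use that a bounded net converging to $0$ in $\|\cdot\|_{2,\tau_B}$ converges $\ast$-strongly to $0$) produces a finite set $F\subset M$ and $\eps_0>0$ with $\sum_{x,y\in F}\|E_B(x^{*}uy)\|_{2,\tau_B}^2\geq\eps_0^2$ for all $u\in\mathcal{U}(A)$. Put $R:=\sum_{x\in F}\lambda(x)e\lambda(x)^{*}\in\mathcal{L}_B(\mathcal{H})_+$. The crucial computation --- and the one place where finiteness of $B$ is really used --- is
\begin{equation*}
\Tr(R)=\sum_{x\in F}\tau_B(E_B(x^{*}x))<\infty,\qquad \Tr(R^2)=\sum_{x,y\in F}\|E_B(x^{*}y)\|_{2,\tau_B}^2<\infty,
\end{equation*}
so $R\in\rL^2(\mathcal{L}_B(\mathcal{H}),\Tr)$; moreover, using $e\lambda(z)e=L_{E_B(z)}$ and the trace property,
\begin{equation*}
\langle R,\lambda(u)R\lambda(u)^{*}\rangle_{\rL^2(\Tr)}=\Tr\big(R\,\lambda(u)R\lambda(u)^{*}\big)=\sum_{x,y\in F}\|E_B(x^{*}uy)\|_{2,\tau_B}^2\geq\eps_0^2
\end{equation*}
for all $u\in\mathcal{U}(A)$. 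Let $b_0$ be the unique element of minimal $\rL^2(\Tr)$-norm in the $\rL^2(\Tr)$-closed convex hull $K$ of the bounded orbit $\{\lambda(u)R\lambda(u)^{*}:u\in\mathcal{U}(A)\}$. Then $b_0\in\mathcal{L}_B(\mathcal{H})_+$, $\Tr(b_0)\leq\Tr(R)<\infty$, conjugation by any $u\in\mathcal{U}(A)$ fixes $b_0$ (it is a $\Tr$-preserving automorphism leaving $K$ invariant), and $b_0\neq0$ since $\langle R,b_0\rangle\geq\eps_0^2$. Hence a spectral projection $p_0=\mathbf 1_{[\delta,\infty)}(b_0)$ with $0<\delta<\|b_0\|$ is a nonzero finite-trace projection in $\lambda(A)'\cap\mathcal{L}_B(\mathcal{H})$, whose range is a $B$-finitely generated $\lambda(A)$-invariant submodule of $\mathcal{H}$; choosing a finite $B$-basis of it among bounded vectors (which lie in $M\subset\mathcal{H}$) and reading off the $\lambda(A)$-action yields $n\geq1$, a normal $\ast$-homomorphism $\pi\colon A\to\mathbf M_n(B)$ and a nonzero $v\in\mathbf M_{1,n}(M)$ with $av=v\pi(a)$ for all $a\in A$; a polar decomposition makes $v$ a partial isometry. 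This is $(1)$.

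\textbf{The ``moreover'' part.} Now $M$ is a factor and $A,B\subset M$ are Cartan, so $1_A=1_B=1$. For $(3)\Rightarrow(1)$: if $u\in\mathcal{U}(M)$ satisfies $uAu^{*}=B$, take $n=1$, $v=u^{*}$ and $\pi(a)=uau^{*}\in B$, so that $av=v\pi(a)$. For $(1)\Rightarrow(3)$: from the corner formulation of $A\preceq_M B$ we obtain projections $p\in A$, $q\in B$, a nonzero partial isometry $v\in pMq$ and a unital normal $\ast$-homomorphism $\theta\colon pAp\to qBq$ with $av=v\theta(a)$ for $a\in pAp$. Since $A$ is maximal abelian, $vv^{*}\in(pAp)'\cap pMp=pAp\subset A$; exploiting the regularity of $B$ (and symmetrically of $A$) through Popa's standard maximality argument --- exhausting $1$ by orthogonal $\mathcal{N}_M(B)$- and $\mathcal{N}_M(A)$-translates of suitable projections --- one promotes $v$ to a unitary $u\in\mathcal{U}(M)$ with $uAu^{*}\subseteq B$. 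As $uAu^{*}$ is maximal abelian and sits inside the abelian algebra $B$, we get $B\subseteq(uAu^{*})'\cap M=uAu^{*}$, hence $uAu^{*}=B$, which is $(3)$. The only genuine obstacle is the one flagged above: Popa's minimal-norm argument must be carried out inside $\mathcal{L}_B(\mathcal{H})$ rather than $\langle M,e_B\rangle$, with finiteness of $B$ invoked both to guarantee that $\mathcal{L}_B(\mathcal{H})$ is semifinite with a canonical trace and to guarantee that $R$ is square-integrable for it; everything else is routine or classical.
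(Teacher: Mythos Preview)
The paper does not give its own proof of this theorem; it records it as a known result from \cite[Theorems~2.3 and~2.5]{houdayer-vaes}, with a further generalization noted in \cite[Proposition~3.1]{ueda12}. Your overall strategy for $(2)\Rightarrow(1)$ --- passing to the commutant $\mathcal L_B(\mathcal H)$ of the right $B$-action on $\mathcal H=\rL^2(M,\omega)$, using finiteness of $B$ to equip it with a canonical semifinite trace, and running Popa's convex-hull/minimal-norm argument there to produce a nonzero finite-trace projection $p_0\in\lambda(A)'\cap\mathcal L_B(\mathcal H)$ --- is precisely the approach of those references, and your $(1)\Rightarrow(2)$ is the standard computation.

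There is, however, a genuine gap in your extraction of $(n,\pi,v)$ from $p_0$. You assert that a finite right-$B$-basis of $p_0\mathcal H$ can be chosen among bounded vectors ``which lie in $M\subset\mathcal H$'', and this is what lets you place $v$ in $\mathbf M_{1,n}(M)$. That parenthetical claim is false once $M$ is not tracial. For a concrete counterexample take $M=B\ovt N$ with $N$ a type~${\rm III}$ factor, $E_B=\id_B\otimes\psi$ for a faithful normal state $\psi$ on $N$, and $\omega=\tau_B\otimes\psi$; then any $b\otimes\eta$ with $b\in B$ and $\eta\in\rL^2(N,\psi)\setminus N\xi_\psi$ is a right-$B$-bounded vector of $\mathcal H$ not lying in $M$. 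In Popa's original tracial argument this step is harmless precisely because the right-$B$-bounded vectors of $\rL^2(M,\tau)$ do coincide with $M$, but you have explicitly left the tracial setting. The cited references close this gap by a different endgame: one uses that the $\ast$-algebra generated by $\lambda(M)$ and $e$ is dense in $\mathcal L_B(\mathcal H)$ for the $\rL^2(\Tr)$-topology on finite-trace elements, approximates $p_0$ by a finite sum $\sum_i\lambda(x_i)\,e\,\lambda(x_i)^*$ with $x_i\in M$, and manufactures $v$ from the $x_i$ (followed by a polar decomposition) rather than from an abstract $B$-basis of $p_0\mathcal H$. You should replace your bounded-vector step by this approximation argument.

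A smaller remark on $(1)\Rightarrow(3)$: invoking ``Popa's standard maximality argument'' is correct in spirit, but in a possibly type~${\rm III}$ factor the patching of partial isometries requires the Cartan expectations $E_A,E_B$ and a comparison argument carried out inside the abelian algebras $A$ and $B$; this is exactly the content of \cite[Theorem~2.5]{houdayer-vaes}, and a pointer to it would be appropriate.
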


\begin{df}\label{intertwining-general-definition}
Let $M$ be any von Neumann algebra. Let $A \subset 1_A M1_A$ and $B \subset 1_B M 1_B$ be von Neumann subalgebras such that $B$ is finite and with expectation. We say that $A$ \emph{embeds into} $B$ \emph{inside} $M$ and denote $A \preceq_M B$ if one of the equivalent conditions of Theorem \ref{intertwining-general} is satisfied.
\end{df}

Observe that when $1_A$ and $1_B$ are finite projections in $M$ then $1_A \vee 1_B$ is finite, and $A \preceq_M B$ in the sense of Definition \ref{intertwining-general-definition} if and only if $A \preceq_{(1_A \vee 1_B) M (1_A \vee 1_B)} B$ holds in the usual sense for finite von Neumann algebras.

In case of semifinite von Neumann algebras, we recall the following useful intertwining result (see \cite[Lemma 2.2]{houdayer-ricard}). When $(\mathcal B, \Tr)$ is a semifinite von Neumann algebra endowed with a semifinite faithful normal trace, we will denote by $\Proj(\mathcal B)$ the set of all nonzero finite trace projections of $\mathcal B$. We will denote by $\|\cdot\|_{2, \Tr}$ the $\rL^2$-norm associated with the trace $\Tr$. 

\begin{lem}\label{intertwining-general-HR}
Let $(\mathcal M, \Tr)$ be a semifinite von Neumann algebra endowed with a semifinite faithful normal trace. Let $\mathcal B \subset \mathcal M$ be a von Neumann subalgebra such that $\Tr | \mathcal B$ is semifinite. Denote by $E_{\mathcal B} : \mathcal M \to \mathcal B$ the unique trace-preserving faithful normal conditional expectation.

Let $p \in \Proj(\mathcal M)$ and $\mathcal A \subset p \mathcal M p$ any von Neumann subalgebra. The following conditions are equivalent:
\begin{enumerate}
\item For every $q \in \Proj(\mathcal B)$, we have $\mathcal A \npreceq_{\mathcal M} q \mathcal B q$.
\item There exists an increasing sequence of projections $q_n \in \Proj(\mathcal B)$ such that $q_n \to 1$ strongly and $\mathcal A \npreceq_{\mathcal M} q_n \mathcal B q_n$ for all $n \in \N$.
\item There exists a net of unitaries $w_k \in \mathcal U(\mathcal A)$ such that $\lim_k \|E_{\mathcal B}(x^* w_k y)\|_{2, \Tr} = 0$ for all $x, y \in p \mathcal M$.
\end{enumerate}
\end{lem}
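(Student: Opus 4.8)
The plan is to reduce everything to the characterization of $\preceq_M$ in terms of nets of unitaries provided by Theorem \ref{intertwining-general}(1)$\Leftrightarrow$(2), keeping careful track of the fact that the conditional expectation onto a corner $q\mathcal B q$ is simply $q E_{\mathcal B}(\,\cdot\,)q$ and that, since $\Tr|_{\mathcal B}$ is semifinite, $\|\cdot\|_{2,\Tr}$-convergence and $\ast$-strong convergence agree on bounded sets in $\mathcal B$ (more precisely, on $q\mathcal B q$ for a fixed finite-trace projection $q$). The implication (2)$\Rightarrow$(1) is immediate: if $\mathcal A\npreceq_{\mathcal M}q_n\mathcal B q_n$ for a strongly dense increasing sequence and $q\in\Proj(\mathcal B)$ is arbitrary, then for $n$ large $q\preceq q_n$ in the sense that $q_n$ dominates a unitary conjugate of $q$ after cutting (use semifiniteness of $\Tr|_{\mathcal B}$ to find $u\in\mathcal U(\mathcal B)$ with $uqu^*\le q_n$ whenever $\Tr(q)\le\Tr(q_n)$), hence $\mathcal A\preceq_{\mathcal M}q\mathcal B q$ would give $\mathcal A\preceq_{\mathcal M}q_n\mathcal B q_n$ by transporting the intertwiner, a contradiction. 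The implication (1)$\Rightarrow$(2) is trivial once one picks any increasing sequence $q_n\in\Proj(\mathcal B)$ with $q_n\to 1$ strongly, which exists because $\Tr|_{\mathcal B}$ is semifinite.

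The substantive content is the equivalence with (3). For (3)$\Rightarrow$(1), suppose a net $w_k\in\mathcal U(\mathcal A)$ has $\|E_{\mathcal B}(x^*w_ky)\|_{2,\Tr}\to 0$ for all $x,y\in p\mathcal M$. Fix $q\in\Proj(\mathcal B)$. For $x,y\in p\mathcal M q$ we have $E_{q\mathcal B q}(x^*w_ky)=qE_{\mathcal B}(x^*w_ky)q$, and since $q$ has finite trace, $\|\,qE_{\mathcal B}(x^*w_ky)q\,\|_{2,\Tr}\le\|E_{\mathcal B}(x^*w_ky)\|_{2,\Tr}\to 0$; as $q\mathcal Bq$ is finite, $\|\cdot\|_{2,\Tr}$-convergence to $0$ of a bounded net implies $\ast$-strong convergence to $0$, so condition (2) of Theorem \ref{intertwining-general} fails for $(\mathcal A,q\mathcal Bq)$ inside $p\mathcal Mp$, i.e.\ $\mathcal A\npreceq_{\mathcal M}q\mathcal Bq$. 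This holds for every $q$, giving (1).

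For the converse direction (1)$\Rightarrow$(3), which I expect to be the main obstacle, the idea is a diagonal/exhaustion argument. Fix an increasing sequence $q_n\in\Proj(\mathcal B)$ with $q_n\to 1$ strongly, and a sequence $(x_m,y_m)$ that is $\ast$-strongly dense in $\Ball(p\mathcal M)$; also fix $\eps_n\downarrow 0$. By (1), $\mathcal A\npreceq_{\mathcal M}q_n\mathcal Bq_n$ for each $n$, so by Theorem \ref{intertwining-general} there is a net of unitaries in $\mathcal U(\mathcal A)$ along which $E_{q_n\mathcal Bq_n}(x^*wy)=q_nE_{\mathcal B}(x^*wy)q_n\to 0$ $\ast$-strongly, hence also in $\|\cdot\|_{2,\Tr}$, for all $x,y\in p\mathcal Mq_n$. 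One extracts a single unitary $w_n\in\mathcal U(\mathcal A)$ with $\|q_nE_{\mathcal B}(x_m^*w_ny_m)q_n\|_{2,\Tr}<\eps_n$ for all $m\le n$. The point is then to upgrade control of the $q_n$-cut-down of $E_{\mathcal B}$ to control of $E_{\mathcal B}$ itself: for fixed $x,y\in p\mathcal M$ write $E_{\mathcal B}(x^*w_ny)=q_nE_{\mathcal B}(x^*w_ny)q_n+(1-q_n)E_{\mathcal B}(x^*w_ny)+q_nE_{\mathcal B}(x^*w_ny)(1-q_n)$. The first term is small by construction; the remaining two are controlled by estimating $\|(1-q_n)\xi\|_{2,\Tr}$ and $\|\xi(1-q_n)\|_{2,\Tr}$ for $\xi=E_{\mathcal B}(x^*w_ny)$, using that $\xi$ lies in the unit ball of $\mathcal B$ and that the square-integrable elements one tests against, namely $E_{\mathcal B}$ applied to fixed $x,y$ after multiplying by suitable finite-trace projections, are well approximated by their $q_n$-truncations — here one uses $q_n\to 1$ strongly together with the fact that it suffices, by density, to verify the conclusion for $x,y$ ranging over a countable set with $\Tr$-finite "support" on the $\mathcal B$-side, replacing $p\mathcal M$ by $p\mathcal Mq$ for each fixed $q\in\Proj(\mathcal B)$ in the statement (3). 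A diagonal choice of $w_n$ across $n$, $m$, and the chosen $q$'s then produces the desired net (indexed by the directed set of finite subsets of data together with tolerances), completing the proof. The delicate bookkeeping is precisely in this truncation step, because $E_{\mathcal B}(x^*w_ny)$ need not be $\Tr$-square-integrable unless one multiplies on the right by a finite-trace projection, which is why the test vectors in (3) are taken in $p\mathcal M$ rather than $p\mathcal Mp$ and why compatibility with the earlier-used corner $q\mathcal Bq$ formulation has to be checked by hand.
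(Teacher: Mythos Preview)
Your argument for $(2)\Rightarrow(1)$ is wrong: the claim that $\Tr(q)\le\Tr(q_n)$ yields $u\in\mathcal U(\mathcal B)$ with $uqu^*\le q_n$ fails whenever $\mathcal B$ is not a factor. For instance take $\mathcal B=\rL^\infty[0,2]$ with Lebesgue trace, $q=\mathbf 1_{[0,1]}$ and $q_n=\mathbf 1_{[1/n,2]}$; then $q_n\to 1$ strongly and $\Tr(q)<\Tr(q_n)$, but every unitary in $\mathcal B$ fixes $q$, and $q\not\le q_n$ for any $n$. Fortunately this error is harmless: your ``$(1)\Rightarrow(3)$'' argument only ever invokes the hypothesis $\mathcal A\npreceq_{\mathcal M}q_n\mathcal Bq_n$ for a fixed increasing sequence $q_n\to 1$, i.e.\ it is really a proof of $(2)\Rightarrow(3)$, and the cycle $(1)\Rightarrow(2)\Rightarrow(3)\Rightarrow(1)$ then closes without ever needing a direct $(2)\Rightarrow(1)$. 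This is exactly the route the paper takes.

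On the substantive step $(2)\Rightarrow(3)$, your approach coincides with the paper's, but your presentation makes the truncation look harder than it is. The paper truncates \emph{before} applying $E_{\mathcal B}$: since $p$ has finite trace, $x^*p$ and $py$ lie in $\rL^2(\mathcal M,\Tr)$, so $\|q_nx^*p-x^*p\|_{2,\Tr}+\|pyq_n-py\|_{2,\Tr}\to 0$; one then uses $\mathcal A\npreceq_{\mathcal M}q_n\mathcal Bq_n$ to make $\|E_{q_n\mathcal Bq_n}(q_nx^*\,w\,yq_n)\|_{2,\Tr}$ small and concludes. Your post-$E_{\mathcal B}$ decomposition is the same computation in disguise, because $E_{\mathcal B}$ is a $\mathcal B$-bimodule map: $(1-q_n)E_{\mathcal B}(x^*w_ny)=E_{\mathcal B}\big((1-q_n)x^*w_ny\big)$ has $\|\cdot\|_{2,\Tr}$ bounded by $\|(1-q_n)x^*p\|_{2,\Tr}$, which tends to $0$ for fixed $x$, and similarly on the right. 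Once you use bimodularity, the ``delicate bookkeeping'' you anticipate disappears, and there is no need to restrict test vectors to $p\mathcal M q$ or to invoke a countable dense family; the net is naturally indexed by pairs (finite subset of $\Ball(p\mathcal M)$, $\varepsilon>0$), as the paper does.
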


\begin{proof}
$(1) \Rightarrow (2)$ is obvious.

$(2) \Rightarrow (3)$ Let $\mathcal F \subset \Ball( p \mathcal M)$ be a finite subset and $\varepsilon > 0$. We need to show that there exists $w \in \mathcal U(\mathcal A)$ such that $\|E_{\mathcal B}(x^* w y)\|_{2, \Tr} < \varepsilon$ for all $x, y \in \mathcal F$. Since the projection $p$ has finite trace, there exists $n \in \N$ large enough such that 
$$\|q_nx^*p - x^*p\|_{2, \Tr} + \|p y q_n - p y\|_{2, \Tr} < \frac{\varepsilon}{2}, \forall x, y \in \mathcal F.$$
Put $q = q_n$. Since $\mathcal A \npreceq_{\mathcal M} q \mathcal B q$, there exists a net $w_k \in \mathcal U(\mathcal A)$ such that $\lim_k \|E_{q \mathcal B q}(a^* w_k b)\|_{2, \Tr} = 0$ for all $a, b \in p \mathcal M q$. Appying this to $a = p x q$ and $b = p y q$, if we take $w = w_k$ for $k$ large enough, we get $\|E_{\mathcal B}(q x^*p \, w \, p y q)\|_{2, \Tr} = \| E_{q\mathcal B q}(q x^*p \, w \, p y q)\|_{2, \Tr} < \frac{\varepsilon}{2}$. Therefore, $\|E_{\mathcal B} (x^* w y)\|_{2, \Tr} < \varepsilon$.

$(3) \Rightarrow (1)$ Let $q \in \Proj(\mathcal B)$ and put $e = p \vee q$. Let $\lambda = \Tr(e) < \infty$ and denote by $\|\cdot\|_2$ the $\rL^2$-norm with respect to the normalized trace on $e \mathcal M e$. For all $x, y \in p \mathcal M q$, we have 
$$\lim_k \|E_{q \mathcal B q} (x^* w_k y)\|_2 = \lambda^{-1/2} \lim_k \|E_{q \mathcal B q}(x^* w_k y)\|_{2, \Tr} = 0.$$
This means exactly that $\mathcal A \npreceq_{e \mathcal M e} q\mathcal B q$ in the usual sense for tracial von Neumann algebras and so $\mathcal A \npreceq_{ \mathcal M} q\mathcal B q$.
\end{proof}

Let $\Gamma$ be any countable discrete group and $\mathcal S$ any nonempty collection of subgroups of $\Gamma$. Following \cite[Definition 15.1.1]{BO08}, we say that a subset $\mathcal F \subset \Gamma$ is {\em small relative to} $\mathcal S$ if there exist $n \geq 1$, $\Sigma_1, \dots, \Sigma_n \in \mathcal S$ and $g_1, h_1, \dots, g_n, h_n \in \Gamma$ such that $\mathcal F \subset \bigcup_{i = 1}^n g_i \Sigma_i h_i$.

We will need the following generalization of \cite[Proposition 2.6]{vaes-cohomology} and \cite[Lemma 2.7]{houdayer-vaes}.

\begin{prop}\label{intertwining-semifinite}
Let $(\mathcal B, \Tr)$ be a semifinite von Neumann algebra endowed with a semifinite faithful normal trace. Let $\Gamma \curvearrowright (\mathcal B, \Tr)$ be a trace preserving action of a countable discrete group $\Gamma$ on $(\mathcal B, \Tr)$ and denote by $\mathcal M = \mathcal B \rtimes \Gamma$ the corresponding semifinite crossed product von Neumann algebra. Let $p \in \Proj(\mathcal M)$ and $\mathcal A \subset p \mathcal M p$ any von Neumann subalgebra. Denote $\mathcal P = \mathcal N_{p\mathcal Mp}(\mathcal A)\dpr$. 

For every subset $\mathcal F \subset \Gamma$ which is small relative to $\mathcal S$, denote by $P_{\mathcal F}$ the orhogonal projection from $\rL^2(\mathcal M, \Tr)$ onto the closed linear span of $\{xu_g : x \in \mathcal B \cap \rL^2(\mathcal B, \Tr), g \in \mathcal F\}$. 

\begin{enumerate}
\item The set $\mathcal J = \{e \in \mathcal A' \cap p\mathcal Mp : \mathcal A e \npreceq_{ \mathcal M} q (\mathcal B \rtimes \Sigma) q, \forall \Sigma \in \mathcal S, \forall q \in \Proj(\mathcal B)\}$ is directed and attains its maximum in a projection $z$ which belongs to $\mathcal Z(\mathcal P)$.
\item There exists a net $(w_k)$ in $\mathcal U(\mathcal Az)$ such that $\lim_k \|P_{\mathcal F}(w_k)\|_{2, \Tr} = 0$ for every subset $\mathcal F \subset \Gamma$ which is small relative to $\mathcal S$.
\item For every $\varepsilon > 0$, there exists a subset $\mathcal F \subset \Gamma$ which is small relative to $\mathcal S$ such that $\|a - P_{\mathcal F}(a)\|_{2, \Tr} < \varepsilon$ for all $a \in \mathcal A(p - z)$.
\end{enumerate}
\end{prop}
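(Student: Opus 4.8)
The plan is to adapt the proof of \cite[Proposition 2.6]{vaes-cohomology} and \cite[Lemma 2.7]{houdayer-vaes} from the finite (tracial) setting to the semifinite setting, using Lemma \ref{intertwining-general-HR} as the replacement for the tracial intertwining criterion. For item (1), I would first check that the family $\mathcal{J}$ is closed under taking suprema of finite (hence arbitrary) families of projections. The key point is that if $e_1, e_2 \in \mathcal{J}$ then $e_1 \vee e_2 \in \mathcal{J}$: this follows because $\mathcal{A}e \preceq_{\mathcal{M}} q(\mathcal{B} \rtimes \Sigma)q$ fails for $e = e_1$ and for $e = e_2$, and one uses the standard fact that $\preceq$ is "monotone" in the cut-down projection together with the characterization via Lemma \ref{intertwining-general-HR}(3) applied on each corner; since $\Proj(\mathcal{B})$ is upward directed and $\mathcal{S}$ may be infinite, one invokes the equivalence $(1)\Leftrightarrow(2)$ of Lemma \ref{intertwining-general-HR} to reduce to a countable cofinal sequence of $q$'s (and, if $\mathcal{S}$ is assumed countable, of $\Sigma$'s). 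By directedness, $\mathcal{J}$ attains a maximal element $z$, and a standard argument shows $z$ is $\mathcal{P}$-central: conjugating by a normalizing unitary $u \in \mathcal{N}_{p\mathcal{M}p}(\mathcal{A})$ sends $\mathcal{J}$ into itself (because $u\mathcal{A}u^* = \mathcal{A}$ and $uew_ku^* $ witnesses the same non-embedding), so $uzu^* = z$, hence $z \in \mathcal{A}'\cap (p\mathcal{M}p) \cap \mathcal{P}' = \mathcal{Z}(\mathcal{P})$.

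For item (2), I would apply Lemma \ref{intertwining-general-HR} componentwise: for the algebra $\mathcal{A}z$ inside $z\mathcal{M}z$ (with the induced semifinite trace), the definition of $z$ gives that $\mathcal{A}z \npreceq_{\mathcal{M}} q(\mathcal{B}\rtimes\Sigma)q$ for every $\Sigma \in \mathcal{S}$ and every $q \in \Proj(\mathcal{B})$. By Lemma \ref{intertwining-general-HR}(3) (applied with $\mathcal{B}$ replaced by $\mathcal{B}\rtimes\Sigma$, whose induced trace is semifinite since the action is trace-preserving), for each fixed $\Sigma$ there is a net of unitaries in $\mathcal{U}(\mathcal{A}z)$ with $\|E_{\mathcal{B}\rtimes\Sigma}(x^*w_ky)\|_{2,\Tr}\to 0$. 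A diagonal/separability argument over a countable cofinal set of finite unions $\bigcup_{i=1}^n g_i\Sigma_i h_i$ then produces a single net $(w_k)$ in $\mathcal{U}(\mathcal{A}z)$ that works simultaneously: one observes that $\|P_{\mathcal{F}}(w_k)\|_{2,\Tr}$ for $\mathcal{F} \subset \bigcup_i g_i\Sigma_i h_i$ is controlled by the left-right translates of $\|E_{\mathcal{B}\rtimes\Sigma_i}(u_{g_i}^* w_k u_{h_i^{-1}})\|_{2,\Tr}$, up to terms handled by cutting down with finite-trace projections of $\mathcal{B}$ as in the proof of Lemma \ref{intertwining-general-HR}. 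This is the computation I would not spell out in full.

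For item (3), I would argue by contradiction: if no such $\mathcal{F}$ works, then there is $\varepsilon > 0$ and a net $a_k \in \mathcal{A}(p-z)$ with $\|a_k\|_\infty \le 1$ and $\|a_k - P_{\mathcal{F}}(a_k)\|_{2,\Tr} \ge \varepsilon$ for every $\mathcal{F}$ small relative to $\mathcal{S}$; by a standard averaging/maximality argument (as in \cite{vaes-cohomology}, \cite{houdayer-vaes}) this forces the existence of a nonzero projection $e \in \mathcal{A}'\cap (p-z)\mathcal{M}(p-z)$ with $e \in \mathcal{J}$, contradicting the maximality of $z$. Concretely, one would consider the $\mathcal{A}$-central "mass" of the limit of $a_k a_k^*$ projected away from all small $\mathcal{F}$, show it is supported under $p-z$ and lies in $\mathcal{A}'\cap p\mathcal{M}p$, and invoke item (2)'s criterion in reverse to see that its support projection lies in $\mathcal{J}$.

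The main obstacle will be the bookkeeping in item (2): passing from the per-$\Sigma$ nets supplied by Lemma \ref{intertwining-general-HR} to one universal net requires a separability argument (the cofinal family of subsets small relative to $\mathcal{S}$ must be reduced to a countable one, which uses that the predual of $\mathcal{M}$ is separable and, implicitly, that $\mathcal{S}$ can be taken countable), and the estimate relating $\|P_{\mathcal{F}}(w_k)\|_{2,\Tr}$ to the conditional expectations $E_{\mathcal{B}\rtimes\Sigma}$ needs the finite-trace cut-down trick to make the non-finite trace harmless. Everything else is a faithful transcription of the tracial proof with $\|\cdot\|_2$ replaced by $\|\cdot\|_{2,\Tr}$ and "finite von Neumann algebra" replaced by "semifinite with the induced trace semifinite on the relevant subalgebras."
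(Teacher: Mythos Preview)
Your plan for item (1) is essentially the same as the paper's, though the paper argues contrapositively (if the supremum $e = \bigvee e_i$ fails to be in $\mathcal J$, then the intertwining partial isometry $v$ for $\mathcal Ae$ satisfies $e_iv \neq 0$ for some $i$, and its polar part witnesses $e_i \notin \mathcal J$); this is slightly cleaner than arguing first for finite joins.

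For item (2) your approach works but is needlessly complicated, and the separability concern is a red herring. The paper avoids any diagonal extraction by the following trick: given a fixed small set $\mathcal F \subset \bigcup_{i=1}^n g_i\Sigma_i h_i$ and $\varepsilon > 0$, embed $\mathcal Az$ diagonally into $\mathbf M_n(\mathcal M)$ via $\rho(x) = x \oplus \cdots \oplus x$, and view $\mathcal Q = \bigoplus_i \mathcal B \rtimes \Sigma_i$ as a subalgebra on which the trace is semifinite. Then $\rho(\mathcal Az) \npreceq_{\mathbf M_n(\mathcal M)} \rho(q)\mathcal Q\rho(q)$ for all $q$, so a \emph{single} application of Lemma~\ref{intertwining-general-HR} produces one net of unitaries in $\mathcal U(\mathcal Az)$ that kills $E_{\mathcal B \rtimes \Sigma_i}$ for all $i$ simultaneously. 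Combined with $P_{g\Sigma h}(x) = u_g E_{\mathcal B \rtimes \Sigma}(u_g^* x u_h^*)u_h$, this gives a unitary $w$ with $\|P_{\mathcal F}(w)\|_{2,\Tr} < \varepsilon$. The net is then simply indexed by pairs $(\mathcal F, \varepsilon)$, and no countability of $\mathcal S$ or separability of $\mathcal M_*$ is needed.

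For item (3) your contradiction approach has a genuine gap. From the failure of (3) you obtain, for each small $\mathcal F$, an element $a_{\mathcal F} \in \Ball(\mathcal A(p-z))$ with at least $\varepsilon$ of its $\|\cdot\|_{2,\Tr}$-mass outside the range of $P_{\mathcal F}$; but these $a_{\mathcal F}$ may be supported on varying parts of $p-z$, and your sketch does not explain how to extract a \emph{fixed} nonzero projection $e \leq p-z$ in $\mathcal A' \cap p\mathcal Mp$ on which the non-embedding holds for \emph{all} $\Sigma$ and $q$. The phrase ``$\mathcal A$-central mass of the limit of $a_k a_k^*$'' is not a well-defined object here, and a weak-$*$ limit could vanish. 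The paper instead argues constructively: since every nonzero $e \leq p-z$ lies outside $\mathcal J$, one chooses finitely many pairwise orthogonal projections $e_1,\dots,e_\ell \leq p-z$ with $\|(p-z) - \sum e_i\|_{2,\Tr} < \varepsilon/3$, each equipped with an explicit intertwining partial isometry $v_i$ into some $\mathbf M_{n_i}(q_i(\mathcal B \rtimes \Sigma_i)q_i)$; one then approximates the combined $v$ by an element $v_0$ with finite Fourier support $\mathcal G$, and sets $\mathcal F = \bigcup_{i}\bigcup_{g,h \in \mathcal G} g\Sigma_i h^{-1}$. For $a \in \Ball(\mathcal A(p-z))$, the relation $ae = v\varphi(a)v^* \approx v_0\varphi(a)v_0^*$ shows $a$ is $\varepsilon$-close to the range of $P_{\mathcal F}$.
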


\begin{proof}
$(1)$ In order to show that the set $\mathcal J$ is directed and attains its maximum, it suffices to prove that 
whenever $(e_i)_{i \in I}$ is a family of projections in $\mathcal A' \cap p\mathcal Mp$ and $e = \bigvee_{i \in I} e_i$, if $e \notin \mathcal J$, then there exists $i \in I$ such that $e_i \notin \mathcal J$. If $e \notin \mathcal J$, there exist $\Sigma \in \mathcal S$ and $q \in \Proj(\mathcal B)$ such that $\mathcal A e \preceq_{ \mathcal M} q (\mathcal B \rtimes \Sigma) q$. Let $n \geq 1$, a nonzero partial isometry $v \in \mathbf M_{1, n}(\C) \otimes e \mathcal M q$ and a normal $\ast$-homomorphism $\varphi : \mathcal A e \to \mathbf M_n(q(\mathcal B \rtimes \Sigma)q)$ such that $a v = v \varphi(a)$ for all $a \in \mathcal A e$. By definition we have $e v = v$. Choose $i \in I$ such that $e_i v \neq 0$ and denote by $w \in \mathbf M_{1, n}(\C) \otimes e_i \mathcal M q$ the polar part of $e_i v$. Since $a w = w \varphi(a)$ for all $a \in \mathcal A e$, it follows that $\mathcal A e_i \preceq_{ \mathcal M } q (\mathcal B \rtimes \Sigma) q$. Hence, $e_i \notin \mathcal J$.

Denote by $z$ the maximum of the set $\mathcal J$. It is easy to see that $u z u^* \in \mathcal J$ whenever $u \in \mathcal N_{p\mathcal Mp}(\mathcal A)$, hence $u z u^* = z$. Therefore $z \in \mathcal Z(\mathcal P)$.

$(2)$ We have that $\mathcal A z \npreceq_{\mathcal M } q(\mathcal B \rtimes \Sigma)q$ for all $\Sigma \in \mathcal S$ and all $q \in \Proj(\mathcal B)$. Let $\varepsilon > 0$ and $\mathcal F \subset \Gamma$ a subset which is small relative to $\mathcal S$. We show that we can find $w \in \mathcal U(\mathcal A z)$ such that $\|P_{\mathcal F}(w)\|_{2, \Tr} < \varepsilon$.

Let $\mathcal F \subset \bigcup_{i = 1}^n g_i \Sigma_i h_i$ with $\Sigma_1, \dots, \Sigma_n \in \mathcal S$ and $g_1, h_1, \dots, g_n, h_n \in \Gamma$. Consider the semifinite von Neumann algebra $\mathbf M_n(\mathcal M)$ together with the diagonal subalgebra $\mathcal Q = \bigoplus_{i = 1}^n \mathcal B \rtimes \Sigma_i$. Observe that the canonical trace on $\mathbf M_n(\mathcal M)$ is still semifinite on $\mathcal Q$. Consider moreover the trace preserving $\ast$-embedding $\rho : \mathcal M \to \mathbf M_n(\mathcal M) : x \mapsto x \oplus \cdots \oplus x$. 

Since $\mathcal A z \npreceq_{ \mathcal M} q(\mathcal B \rtimes \Sigma_i)q$ for all $i \in \{1, \dots, n\}$ and all $q \in \Proj(\mathcal B)$, we get that $\rho(\mathcal A z) \npreceq_{ \mathbf M_n(\mathcal M)} \rho(q) \mathcal Q \rho(q)$ for all $q \in \Proj(\mathcal B)$ by the first criterion in Lemma \ref{intertwining-general-HR}. Then by the second criterion in Lemma \ref{intertwining-general-HR}, there exists a net $w_k \in \mathcal U(\mathcal A z)$ such that 
$$\lim_k \|E_{\mathcal B \rtimes \Sigma_i}(x w_k y)\|_{2, \Tr} = 0, \forall x, y \in \mathcal M, \forall i \in \{1, \dots, n\}.$$
Recall that $P_{g \Sigma h}(x) = u_g E_{\mathcal B \rtimes \Sigma}(u_g^* x u_h^*) u_h$ for all $x \in \mathcal M \cap \rL^2(\mathcal M, \Tr)$. Applying what we have just proved to $x = u_{g_i}^*$ and $y = u_{h_i}^*$, we get that $\lim_k \|P_{g_i \Sigma_i h_i}(w_k)\|_{2, \Tr} = 0$ for all $i \in \{1, \dots, n\}$. Therefore $\lim_k \|P_{\mathcal F}(w_k)\|_{2, \Tr} = 0$.

$(3)$ By construction, for any projection $e \leq p - z$, there exist $\Sigma \in \mathcal S$ and $q \in \Proj(\mathcal B)$ such that $\mathcal Ae \preceq_{\mathcal M} q(\mathcal B \rtimes \Sigma)q$. Let $\varepsilon > 0$. Choose $\ell \geq 1$ and $e_1, \dots, e_\ell \in \mathcal A' \cap p \mathcal M p$ pairwise orthogonal projections such that:
\begin{itemize}
\item For every $i \in \{1, \dots, \ell\}$, $e_i \leq p - z$ and $e = e_1 + \cdots + e_\ell$ satisfies $\|(p - z) - e\|_{2, \Tr} \leq \varepsilon / 3$.
\item For every $i \in \{1, \dots, \ell\}$, there exist $n_i \geq 1$, $\Sigma_i \in \mathcal S$, a projection $q_i \in \Proj(\mathcal B)$, a nonzero partial isometry $v_i \in \mathbf M_{1, n_i}(\C) \otimes e_i \mathcal M q_i$ and a normal $\ast$-homomorphism $\varphi_i : \mathcal A \to \mathbf M_{n_i}(q_i (\mathcal B \rtimes \Sigma_i) q_i)$ such that $v_i v_i^* = e_i$ and $a v_i = v_i \varphi_i(a)$ for all $a \in \mathcal A$.
\end{itemize}

Put $n = n_1 + \cdots + n_\ell$, $q = \bigvee_{i = 1}^\ell q_i$ and define $\varphi : \mathcal A \to \bigoplus_{i = 1}^\ell q_i (\mathcal B \rtimes \Sigma_i) q_i \subset \mathbf M_n(q \mathcal M q)$ by putting together the $\varphi_i$ diagonally. Similarly, define the partial isometry $v \in \mathbf M_{1, n}(\C) \otimes e \mathcal M q$ such that $vv^* = e$ and $a v = v \varphi(a)$ for all $a \in \mathcal A$. 

Using Kaplansky density theorem, choose $v_0 \in \mathbf M_{1, n}(\C) \otimes q(\mathcal B \rtimes_{\alg} \Gamma)q$ such that $\|v_0\|_\infty \leq 1$ and $\|v - v_0\|_{2, \Tr} < \varepsilon/3$. Define $\mathcal G \subset \Gamma$ the finite subset such that $v_0$ belongs to the linear span of $\{e_{1i} \otimes exu_gq : x \in \mathcal B, g \in \mathcal G, 1 \leq i \leq \ell\}$. Put $\mathcal F = \bigcup_{i = 1}^\ell \bigcup_{g, h \in \mathcal G} g \Sigma_i h^{-1}$.

Let $a \in \Ball(\mathcal A(p - z))$ and write $a = a(p - z - e) + a e$. Observe that $\|a(p - z - e)\|_{2, \Tr} \leq \|a\|_\infty \|p - z - e\|_{2, \Tr} < \varepsilon/3$. Since $a e = v \varphi(a) v^*$, it follows that $a e$ lies at a distance less than $2 \varepsilon/3$ from $v_0 \varphi(a) v_0^*$. Observe that by construction $P_{\mathcal F}(v_0 \varphi(a) v_0^*) = v_0 \varphi(a) v_0^*$. Therefore, $a$ lies at a distance less than $\varepsilon$ from the range of $P_{\mathcal F}$.
\end{proof}

\subsection{Amalgamated free product von Neumann algebras}

For $i \in \{1, 2\}$, let $B \subset M_i$ be an inclusion of von Neumann algebras with expectation $E_i : M_i \to B$. Recall that the \emph{amalgamated free product} $(M, E) = (M_1, E_1) \ast_ B (M_2, E_2)$ is the von Neumann algebra $M$ generated by $M_1$ and $M_2$ where the faithful normal conditional expectation $E : M \to B$ satisfies the freeness condition:
$$E(x_1 \cdots x_n) = 0 \mbox{ whenever } x_j \in M_{i_j} \ominus B \mbox{ and } i_j \neq i_{j + 1} \; .$$
Here and in what follows, we denote by $M_i \ominus B$ the kernel of the conditional expectation $E_i : M_i \to B$. We refer to \cite{voiculescu85,voiculescu92,ueda-pacific} for more details on the construction of amalgamated free products in the framework of von Neumann algebras.

Assume that $\Tr$ is a semifinite faithful normal trace on $B$ such that for all $i \in \{1, 2\}$, the weight $\Tr \circ E_i$ is a trace on $M_i$. Then the weight $\Tr \circ E$ is a trace on $M$ by \cite[Theorem 2.6]{ueda-pacific}. In that case, we will say that the amalgamated free product $(M, E) = (M_1, E_1) \ast_{B} (M_2, E_2)$ is {\em semifinite}. Whenever we consider a semifinite faithful normal trace on a semifinite amalgamated free product $(M, E) = (M_1, E_1) \ast_{B} (M_2, E_2)$, we will always assume that $\Tr \circ E = \Tr$ and $\Tr | B$ is semifinite.

The following proposition is a semifinite analogue of \cite[Theorem 1.1]{IPP05}. The proof of Theorem \ref{controlling-afp} is essentially contained in \cite[Theorem 2.4]{CH08}.

\begin{theo}\label{controlling-afp}
Let $(\mathcal M, E) = (\mathcal M_1, E_1) \ast_{\mathcal B} (\mathcal M_2, E_2)$ be a semifinite amalgamated free product von Neumann algebra with semifinite faithful normal trace $\Tr$. Let $p \in \Proj(\mathcal M_1)$ and $\mathcal Q \subset p \mathcal M_1 p$ any von Neumann subalgebra. Assume that there exists a net of unitaries $w_k \in \mathcal U(\mathcal Q)$ such that $\lim_k \|E_{\mathcal B}(x^* w_k y)\|_{2, \Tr} = 0$ for all $x, y \in p \mathcal M_1$.

Then any $\mathcal Q$-$p \mathcal M_1 p$-subbimodule $\mathcal H$ of $\rL^2(p \mathcal M p)$ which has finite dimension as a right $p \mathcal M_1 p$-bimodule must be contained in $\rL^2(p \mathcal M_1 p)$. In particular, $\mathcal N_{p \mathcal M p} (\mathcal Q)\dpr \subset p \mathcal M_1 p$.
\end{theo}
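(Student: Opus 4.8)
The plan is to adapt the tracial malleable-deformation argument of \cite[Theorem~1.1]{IPP05} and \cite[Theorem~2.4]{CH08} to the semifinite setting, the key point being that the deformation $(\theta_t)$ on the core behaves well with respect to the (non-finite) trace $\Tr = \Tr \circ E$ and that the hypothesis on $\mathcal Q$ replaces the usual ``$\mathcal Q$ has no amenable/abelian corner'' mixing hypothesis. First I would recall the malleable deformation: writing $(\mathcal M, E) = (\mathcal M_1, E_1)\ast_{\mathcal B}(\mathcal M_2,E_2)$, one embeds $\mathcal M$ into $\widetilde{\mathcal M} = (\mathcal M_1 \ast_{\mathcal B}(\mathcal B \ovt \rL(\Z))) \ast_{\mathcal B} (\mathcal M_2 \ast_{\mathcal B}(\mathcal B\ovt\rL(\Z)))$ or, more economically, uses the free-product deformation of \cite{IPP05}: there is a one-parameter group $(\theta_t)_{t\in\R}$ of trace-preserving automorphisms of a larger semifinite AFP $\widehat{\mathcal M}\supset \mathcal M$ together with a period-$2$ automorphism $\beta$ with $\beta\theta_t\beta = \theta_{-t}$, such that $\theta_t \to \id$ pointwise in $\|\cdot\|_{2,\Tr}$ on $\mathcal M$ and $E_{\mathcal M}\theta_t E_{\mathcal M}$ is a completely positive ``radial'' multiplier. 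The semifiniteness of $\Tr$ is harmless here because everything is done inside the corner $p\widehat{\mathcal M}p$ (or a slightly larger finite-trace corner), where $\Tr$ is a genuine finite trace after normalization; all $\rL^2$-estimates take place there.

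Next I would run the standard dichotomy. Fix a $\mathcal Q$-$p\mathcal M_1 p$-subbimodule $\mathcal H\subset \rL^2(p\mathcal M p)$ of finite right dimension, say with right support projection close to $p$, and a finite right orthonormal-type system $\xi_1,\dots,\xi_m\in\mathcal H$ with $\sum_j \xi_j (p\mathcal M_1 p)$ spanning a right-dense subspace and $\langle\xi_i,\xi_j\rangle_{p\mathcal M_1 p}$ controlled. The transversality inequality for $(\theta_t)$ gives $\|\xi - \theta_{2t}(\xi)\|_{2,\Tr}^2 \le 2\,\|\xi - E_{\mathcal M}\theta_t(\xi)\|_{2,\Tr}^2$ for $\xi\in\rL^2(p\mathcal M p)$. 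The goal is to show $\theta_t$ converges uniformly on the unit ball of $\mathcal Q$ (for $t$ small), because then a standard argument (Popa's ``if the deformation converges uniformly on $\mathcal U(\mathcal Q)$ then $\mathcal Q\preceq$ something rigid, or $\mathcal Q$ is essentially inside one of the free factors'') forces $\mathcal Q$ into $p\mathcal M_1 p$ up to a corner, and the control of the bimodule $\mathcal H$ then upgrades this to $\mathcal H\subset\rL^2(p\mathcal M_1 p)$. The mechanism for uniform convergence is exactly the hypothesis: if $\theta_t$ did \emph{not} converge uniformly on $\Ball(\mathcal Q)$, one extracts (by the compactness/averaging trick of \cite{Po01, popa-malleable1}, in the form used in \cite[Sec.~2]{CH08}) a $\mathcal Q$-central-ish vector witnessing that $\mathcal Q$ embeds into $\mathcal B$ inside $\mathcal M$ with expectation — i.e. a net of unitaries of $\mathcal Q$ is \emph{not} driven to $0$ by $E_{\mathcal B}(x^*\cdot y)$ — contradicting the assumption $\lim_k \|E_{\mathcal B}(x^* w_k y)\|_{2,\Tr}=0$. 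Here is where Lemma~\ref{intertwining-general-HR} is used: the hypothesis is precisely condition~(3) of that lemma for $\mathcal A = \mathcal Q\subset p\mathcal M_1 p$ and $\mathcal B$, so $\mathcal Q\npreceq_{\mathcal M} q\mathcal B q$ for every $q\in\Proj(\mathcal B)$, which is the non-intertwining input needed to kill the bad alternative.

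Once uniform convergence of $(\theta_t)$ on $\Ball(\mathcal Q)$ is established for small $t$, I would feed the bimodule $\mathcal H$ into the machine: each generating vector $\xi_j$ is moved little by $\theta_t$, and since $\mathcal H$ is finite-dimensional over $p\mathcal M_1 p$ on the right while $\mathcal Q$ acts on the left and $\theta_t$ nearly commutes with both actions on $\mathcal H$, a spectral-gap / conditional-expectation argument (again as in \cite[Thm.~2.4]{CH08}) shows $\theta_t(\xi_j)$ stays in a neighborhood of $\mathcal H$ and then, letting $t\to 0$ appropriately and using the word-length structure of the AFP, that $\mathcal H\subset\rL^2(p\mathcal M_1 p)$. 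Applying this to $\mathcal H = \overline{\lspan}^{\,w}\{\,u\,\xi\,:\,u\in\mathcal N_{p\mathcal M p}(\mathcal Q),\ \xi\in \rL^2(p\mathcal M_1 p)\cdot(\text{finite dim.})\,\}$, equivalently noting that for $u\in\mathcal N_{p\mathcal M p}(\mathcal Q)$ the space $\overline{u\,\rL^2(p\mathcal M_1 p)}$ is a $\mathcal Q$-$p\mathcal M_1 p$-bimodule of right dimension $1$, we conclude $u\in p\mathcal M_1 p$, i.e. $\mathcal N_{p\mathcal M p}(\mathcal Q)\dpr\subset p\mathcal M_1 p$.

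I expect the main obstacle to be the bookkeeping around semifiniteness: one must check that the malleable deformation of \cite{IPP05}, a priori built for \emph{tracial} AFPs, extends to the semifinite AFP $\widehat{\mathcal M}$ with all automorphisms trace-preserving and all the $\rL^2(\cdot,\Tr)$-continuity and transversality estimates intact, and that truncating to finite-trace corners does not interfere with the bimodule $\mathcal H$ (whose right support projection may a priori only be $\le p$, not $=p$). Modulo this — which is exactly the content of ``the proof of Theorem~\ref{controlling-afp} is essentially contained in \cite[Theorem~2.4]{CH08}'' — the argument is the standard deformation/rigidity dichotomy with Lemma~\ref{intertwining-general-HR} supplying the non-intertwining hypothesis in the form needed.
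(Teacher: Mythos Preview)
Your approach has a genuine gap, and it is also far more elaborate than what is needed. The paper's proof does not use the malleable deformation at all. It simply observes that the trace-preserving conditional expectation $E_{\mathcal M_1}:\mathcal M\to\mathcal M_1$ exists, and then checks by a direct word computation that $\lim_k\|E_{\mathcal M_1}(x^* w_k y)\|_{2,\Tr}=0$ for all $x,y\in p(\mathcal M\ominus\mathcal M_1)$: by Kaplansky density one may take $x,y$ to be reduced words beginning (after $p$) with a letter in $\mathcal M_1$, and then freeness collapses $E_{\mathcal M_1}(x^* w_k y)$ to an expression in which the middle piece is $E_{\mathcal B}(x_1^* w_k y_1)$, which tends to $0$ by hypothesis. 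One then invokes \cite[Lemma D.3]{vaes-bourbaki-popa} to conclude that any $\mathcal Q$-$p\mathcal M_1 p$-subbimodule of $\rL^2(p\mathcal M p)$ of finite right dimension lies in $\rL^2(p\mathcal M_1 p)$. No deformation, no transversality, no dichotomy.

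More seriously, your central implication is stated backwards. You write that ``if $\theta_t$ did \emph{not} converge uniformly on $\Ball(\mathcal Q)$, one extracts \dots\ a witness that $\mathcal Q$ embeds into $\mathcal B$.'' This is false: in the IPP/Popa framework it is a \emph{lower bound} $\Tr(\alpha_t(w)w^*)\ge c>0$ for all $w\in\mathcal U(\mathcal Q)$ (i.e.\ rigidity, essentially uniform convergence) that produces an intertwining partial isometry, not the failure of such a bound. Compare Theorem~\ref{convergence-deformation} in this very paper, whose hypothesis is exactly that lower bound and whose proof \emph{uses} Theorem~\ref{controlling-afp}. So your proposed route is also circular: the deformation dichotomy (Theorem~\ref{convergence-deformation}) relies on Theorem~\ref{controlling-afp}, not the other way around. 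Finally, the reference to \cite[Theorem~2.4]{CH08} does not support a deformation argument either; that result is likewise proved by a direct word-length computation of the same kind.
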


\begin{proof}
Using \cite[Proposition V.2.36]{Ta02}, we denote by $E_{\mathcal M_1} : \mathcal M \to \mathcal M_1$ the unique trace preserving faithful normal conditional expectation which satisfies
$$E_{\mathcal M_1}(x_1 \cdots x_{2 m + 1}) = 0$$
whenever $m \geq 1$, $x_1, x_{2 m + 1} \in \mathcal M_1$, $x_{2 j} \in \mathcal M_2 \ominus \mathcal B$ and $x_{2j + 1} \in \mathcal M_1 \ominus \mathcal B$ for all $1 \leq j \leq m - 1$. Observe that we moreover have $\Tr \circ E_{\mathcal M_1} = \Tr$. We denote by $\mathcal M \ominus \mathcal M_1$ the kernel of the conditional expectation $E_{\mathcal M_1} : \mathcal M \to \mathcal M_1$.

\begin{claim}
We have that $\lim_k \|E_{\mathcal M_1}(x^* w_k y)\|_{2, \Tr} = 0$ for all $x, y \in p (\mathcal M \ominus \mathcal M_1)$.
\end{claim}

\begin{proof}[Proof of the Claim]
Observe that using Kaplansky's density theorem, it suffices to prove the Claim for $x = p x_1 \cdots x_{2m + 1}$ and $y = p y_1 \cdots y_{2n + 1}$ with $m, n \geq 1$, $x_1, x_{2m + 1}, y_1, y_{2n + 1} \in \mathcal M_1$, $x_{2 \ell + 1}, y_{2 \ell' + 1} \in \mathcal M_1 \ominus \mathcal B$ and $x_{2 \ell}, y_{2 \ell'} \in \mathcal M_2 \ominus \mathcal B$ for all $1 \leq \ell \leq m - 1$ and all $1\leq  \ell' \leq n - 1$. Then, we have
$$E_{\mathcal M_1}(x^* w_k y) = E_{\mathcal M_1}(x_{2m + 1}^* \cdots x_2^* \, E_{\mathcal B}( x_1^* w_k y_1) \, y_2 \cdots y_{2n + 1}).$$
Hence, $\lim_k \|E_{\mathcal M_1}(x^* w_k y)\|_{2, \Tr} = 0$.
\end{proof}
In particular, we get $\lim_k \|E_{p \mathcal M_1 p}(x^* w_k y)\|_{2, \Tr} = 0$ for all $x, y \in p \mathcal M p \ominus p \mathcal M_1 p$. Finally, applying \cite[Lemma D.3]{vaes-bourbaki-popa}, we are done.
\end{proof}

We will moreover need the following technical results.
 
\begin{prop}\label{intertwining-afp}
Let $(\mathcal M, E) = (\mathcal M_1, E_1) \ast_{\mathcal B} (\mathcal M_2, E_2)$ be a semifinite amalgamated free product von Neumann algebra with semifinite faithful normal trace $\Tr$. Assume the following:
\begin{itemize}
\item For all $i \in \{1, 2\}$ and all nonzero projections $z \in \mathcal Z(\mathcal B)$, $\mathcal Bz \neq z \mathcal M_i z$. 
\item For all $p \in \Proj(\mathcal M)$ and all $q \in \Proj(\mathcal B)$, we have $p \mathcal M p \npreceq_{\mathcal M} q \mathcal B q$.
\end{itemize}

Then for all $i \in \{1, 2\}$, all $e \in \Proj(\mathcal M)$ and all $f \in \Proj(\mathcal M_i)$, we have $e \mathcal M e \npreceq_{\mathcal M} f \mathcal M_i f$.
\end{prop}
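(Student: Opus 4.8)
The plan is to argue by contradiction. Suppose that for some $i \in \{1,2\}$, some $e \in \Proj(\mathcal M)$ and some $f \in \Proj(\mathcal M_i)$ we have $e \mathcal M e \preceq_{\mathcal M} f \mathcal M_i f$. Up to relabelling, take $i = 1$. Unpacking the definition of intertwining, there exist $n \geq 1$, a (possibly nonunital) normal $\ast$-homomorphism $\pi : e \mathcal M e \to \mathbf M_n(f \mathcal M_1 f)$ and a nonzero partial isometry $v \in \mathbf M_{1,n}(\C) \otimes e \mathcal M f$ with $x v = v \pi(x)$ for all $x \in e\mathcal M e$; moreover $v^*v \in \pi(e\mathcal M e)' \cap \mathbf M_n(f\mathcal M_1 f)$ and we may assume $vv^* = e$. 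Set $\mathcal Q = \pi(e\mathcal M e)(v^*v) \subset \mathbf M_n(f \mathcal M_1 f)$, a von Neumann subalgebra of an amplification of $\mathcal M_1$. The first step is to transfer the hypothesis $p\mathcal M p \npreceq_{\mathcal M} q\mathcal B q$ (for all $p, q$) into the statement that $\mathcal Q \npreceq_{\mathbf M_n(\mathcal M_1)} \mathbf M_n(q\mathcal B q)$ for all $q \in \Proj(\mathcal B)$: indeed, if $\mathcal Q$ did embed into a corner of $\mathbf M_n(\mathcal B)$, then composing with $v$ (which intertwines $e\mathcal M e$ into $\mathcal Q$) would give $e\mathcal M e \preceq_{\mathcal M} q\mathcal B q$ for some $q$, contradicting the second bullet hypothesis. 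Here one uses the transitivity of $\preceq$ together with Lemma \ref{intertwining-general-HR} (or its matrix amplification) to move freely between $\mathcal M$ and $\mathbf M_n(\mathcal M_1)$.

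The second step is the heart of the argument: apply Theorem \ref{controlling-afp} to the amplified semifinite AFP $\mathbf M_n(\mathcal M) = \mathbf M_n(\mathcal M_1) \ast_{\mathbf M_n(\mathcal B)} \mathbf M_n(\mathcal M_2)$, with the subalgebra $\mathcal Q \subset (v^*v)\mathbf M_n(\mathcal M_1)(v^*v)$. By the conclusion of the first step, Lemma \ref{intertwining-general-HR} provides a net of unitaries $w_k \in \mathcal U(\mathcal Q)$ with $\|E_{\mathbf M_n(\mathcal B)}(x^* w_k y)\|_{2,\Tr} \to 0$ for all $x, y$ in the relevant corner of $\mathbf M_n(\mathcal M_1)$. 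Theorem \ref{controlling-afp} then forces any $\mathcal Q$-$(v^*v)\mathbf M_n(\mathcal M_1)(v^*v)$-subbimodule of $\rL^2$ of finite right dimension to sit inside $\rL^2$ of the corner of $\mathbf M_n(\mathcal M_1)$; in particular the normalizer $\mathcal N$ of $\mathcal Q$ inside $(v^*v)\mathbf M_n(\mathcal M)(v^*v)$ satisfies $\mathcal N'' \subset (v^*v)\mathbf M_n(\mathcal M_1)(v^*v)$. Now one pulls this back through $v$: the bimodule $\overline{v^* (e\mathcal M e) v} \subset \rL^2((v^*v)\mathbf M_n(\mathcal M)(v^*v))$ is a $\mathcal Q$-$\mathcal Q$-bimodule, and since $\mathcal Q \cong e\mathcal M e$ it carries the same "size" information; more precisely $v$ conjugates the $e\mathcal M e$-$e\mathcal M e$ bimodule $\rL^2(e\mathcal M e)$ into a $\mathcal Q$-subbimodule of $\rL^2((v^*v)\mathbf M_n(\mathcal M)(v^*v))$. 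This subbimodule need not be finite-dimensional on the right, so instead one argues at the level of the normalizer: every unitary $u \in e\mathcal M e$ satisfies $u v = v \pi(u)$ with $\pi(u) \in \mathcal U(\mathcal Q)$, whence $v^* u v \in \mathcal Q$, and conversely $v^* \mathcal M^{(n)} v$-manipulations show that the full algebra $v^*(e\mathcal M e)v$ lies in $(v^*v)\mathbf M_n(\mathcal M_1)(v^*v)$. Consequently $e\mathcal M e = v v^*(e\mathcal M e) v v^* \subset v \, \mathbf M_n(\mathcal M_1) \, v^*$, i.e.\ $e \mathcal M e$ is, via $v$, completely contained in (an amplification of a corner of) $\mathcal M_1$.

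The final step derives the contradiction from the first bullet hypothesis. Having shown $e\mathcal M e \preceq_{\mathcal M} f\mathcal M_1 f$ with the partial isometry $v$ realizing an isomorphism of $e\mathcal M e$ onto a subalgebra $\mathcal Q$ of a corner of $\mathbf M_n(\mathcal M_1)$ whose commutant contribution is trivial, one wants to conclude that a nonzero corner of $\mathcal M$ coincides with a corner of $\mathcal M_1$, which is exactly what the first bullet ($\mathcal B z \neq z \mathcal M_i z$ for all $0 \neq z \in \mathcal Z(\mathcal B)$, i.e.\ the inclusions have no trivial corner in the relevant sense) is designed to rule out: cutting down by a suitable central projection of $\mathcal B$ one produces a nonzero $z \in \mathcal Z(\mathcal B)$ with $\mathcal B z = z \mathcal M_1 z$ or, dually, a place where $\mathcal M_2$ contributes nothing to $\mathcal M$ over $\mathcal B$, contradicting the freeness/non-degeneracy built into the AFP. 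I expect the main obstacle to be precisely this last bookkeeping: carefully tracking how the partial isometry $v$, its support projections $vv^* = e$ and $v^*v \in \mathbf M_n(f\mathcal M_1 f)$, and the matrix amplifications interact with the central decomposition of $\mathcal B$, so as to extract an honest equality of corners rather than a mere embedding — and then invoking the first hypothesis in exactly the right form. The semifiniteness is used throughout only to license the $\|\cdot\|_{2,\Tr}$-formulations of intertwining via Lemma \ref{intertwining-general-HR} and Theorem \ref{controlling-afp}, so no extra care beyond keeping traces semifinite on all the corners is needed there.
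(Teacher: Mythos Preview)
Your overall strategy matches the paper's, but there is a genuine circularity in how you set up Step~2, and a missing ingredient in Step~1.

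You define $\mathcal Q = \pi(e\mathcal M e)(v^*v)$ and assert $\mathcal Q \subset \mathbf M_n(f\mathcal M_1 f)$. This is not justified: while $\pi(e\mathcal M e) \subset \mathbf M_n(f\mathcal M_1 f)$, the projection $v^*v$ lies only in $\mathbf M_n(f\mathcal M f)$, not a priori in $\mathbf M_n(\mathcal M_1)$. Consequently, when you invoke Theorem~\ref{controlling-afp} with the corner projection $p = v^*v$, you are assuming exactly what that theorem is meant to deliver (namely that $v^*v \in \mathbf M_n(\mathcal M_1)$). The paper avoids this by working with $\varphi(e\mathcal M e) \subset f\mathcal M_1 f$ itself (no cut by $v^*v$), applying Theorem~\ref{controlling-afp} with $p = f \in \mathcal M_1$, and concluding $\varphi(e\mathcal M e)' \cap f\mathcal M f \subset f\mathcal M_1 f$; \emph{then} $v^*v \in f\mathcal M_1 f$ follows, and one may shrink $f$ to $v^*v$ to get $f\mathcal M f = f\mathcal M_1 f$.

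Once you make that correction, Step~1 becomes the assertion $\varphi(e\mathcal M e) \npreceq_{\mathcal M_1} q\mathcal B q$, and your transitivity argument (``compose with $v$'') needs $vw \neq 0$ for the second intertwining partial isometry $w$. This is not automatic: $w$ lives in $\mathcal M_1$-matrices and could be orthogonal to $v^*v$. The paper handles this via the standard trick (cf.\ \cite[Remark~3.8]{vaes-bimodules}) of arranging that the support of $E_{\mathcal M_1}(v^*v)$ equals $f$; then $w^* E_{\mathcal M_1}(v^*v) w \neq 0$ forces $vw \neq 0$. You do not mention this, and without it the transitivity step has a gap.

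Finally, the ``bookkeeping'' you anticipate in the last step is not just bookkeeping: from $f\mathcal M f = f\mathcal M_1 f$ with $f \in \mathcal M_1$ one must produce a nonzero $z \in \mathcal Z(\mathcal B)$ with $\mathcal B z = z\mathcal M_2 z$. The paper imports this as \cite[Theorem~5.7]{houdayer-vaes}; it is a nontrivial statement about AFP von Neumann algebras, and you should cite or reprove it rather than leave it as an expectation.
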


\begin{proof}
By contradiction, assume that there exist $i \in \{1, 2\}$, $e \in \Proj(\mathcal M)$ and $f \in \Proj(\mathcal M_i)$, a nonzero partial isometry $v \in e \mathcal M f$ and a unital normal $\ast$-homomorphism $\varphi : e \mathcal M e \to f \mathcal M_i f$ such that $ xv = v \varphi(x)$ for all $x \in e\mathcal M e$. We may assume without loss of generality that $i = 1$. Moreover, as in \cite[Remark 3.8]{vaes-bimodules}, we may assume that the support projection of $E_{\mathcal M_1}(v^*v)$ in $\mathcal M_1$ equals $f$.

Let $q \in \Proj(\mathcal B)$ be arbitrary. By assumption, we have $e \mathcal M e \npreceq_{\mathcal M} q\mathcal B q$. Next, we claim that $\varphi(e \mathcal M e) \npreceq_{\mathcal M_1} q \mathcal B q$. Indeed, otherwise there would exist $n \geq 1$, a nonzero partial isometry $w \in \mathbf M_{1, n} (\C) \otimes f \mathcal M_1 q$ and a normal $\ast$-homomorphism $\psi : \varphi(e \mathcal M e) \to \mathbf M_n(q \mathcal B q)$ such that $\varphi(x) w = w \psi(\varphi(x))$ for all $x \in e \mathcal M e$. Hence, we get $x v w = v w (\psi \circ \varphi)(x)$ for all $x \in e \mathcal M e$. We have $E_{\mathbf M_n(\mathcal M_1)}(w^* v^* v w) = w^* E_{\mathcal M_1}(v^*v) w \neq 0$ since the support projection of $E_{\mathcal M_1}(v^*v)$ is $f$ and $f w = w$. By taking the polar part of $vw$, this would imply that  $e \mathcal M e \preceq_{ \mathcal M} q \mathcal B q$, a contradiction.

By Lemma \ref{intertwining-general-HR} and Theorem \ref{controlling-afp}, we get $\varphi(e \mathcal M e)' \cap f \mathcal M f \subset f \mathcal M_1 f$, hence $v^*v \in f \mathcal M_1 f$. Thus, we may assume that $v^*v = f$. We get $f \mathcal M f = v^* \mathcal M v \subset f \mathcal M_1 f  \subset f \mathcal M f $, so $f \mathcal M_1 f = f \mathcal M f$. The proof of \cite[Theorem 5.7]{houdayer-vaes} shows that there exists a nonzero projection $z \in \mathcal Z(\mathcal B)$ such that $z \mathcal M_2 z = \mathcal B z$, contradicting the assumptions.
\end{proof}

\begin{prop}\label{intertwining-afp-bis}
Let $(\mathcal M, E) = (\mathcal M_1, E_1) \ast_{\mathcal B} (\mathcal M_2, E_2)$ be a semifinite amalgamated free product von Neumann algebra with semifinite faithful normal trace $\Tr$. Let $p \in \Proj(\mathcal M)$ and $\mathcal A \subset p \mathcal M p$ any von Neumann subalgebra. Assume there exist $i \in \{1, 2\}$ and $p_i \in \Proj(\mathcal M_i)$ such that $\mathcal A \preceq_{\mathcal M} p_i \mathcal M_i p_i$.

Then either there exists $q \in \Proj(\mathcal B)$ such that $\mathcal A \preceq_{\mathcal M} q\mathcal B q$ or $\mathcal N_{p \mathcal M p}(\mathcal A)\dpr \preceq_{\mathcal M} p_i \mathcal M_i p_i$.
\end{prop}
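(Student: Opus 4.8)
\textbf{Proof proposal for Proposition \ref{intertwining-afp-bis}.}

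The plan is to imitate the classical ``normalizer control'' argument from \cite{IPP05, vaes-bimodules}, adapted to the semifinite amalgamated free product setting, using Theorem \ref{controlling-afp} as the crucial input. First I would unpack the hypothesis $\mathcal A \preceq_{\mathcal M} p_i \mathcal M_i p_i$: there exist $n \geq 1$, a nonzero partial isometry $v \in \mathbf M_{1,n}(\C) \otimes p \mathcal M (p_i \otimes 1_n)$ and a normal $\ast$-homomorphism $\varphi : \mathcal A \to \mathbf M_n(p_i \mathcal M_i p_i)$ such that $a v = v \varphi(a)$ for all $a \in \mathcal A$. Let $q_0 = v^*v \in (\varphi(\mathcal A))' \cap \mathbf M_n(p_i \mathcal M_i p_i)$ and $e = vv^* \in \mathcal A' \cap p \mathcal M p$; replacing $\mathcal A$ by $\mathcal A e$ is harmless for the normalizer statement since $\mathcal N_{p\mathcal Mp}(\mathcal A)\dpr$ and $\mathcal N_{e\mathcal Me}(\mathcal A e)\dpr$ interact in the standard way (an element of the normalizer of $\mathcal A$ eventually normalizes $\mathcal A e$ after cutting by the central support of $e$), so I would work with a corner where $vv^* = e = 1$ in the relevant compression, i.e. assume $\mathcal A \subset e\mathcal M e$ with $e = vv^*$.

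The dichotomy then comes from applying Theorem \ref{controlling-afp} to the subalgebra $\varphi(\mathcal A) \subset \mathbf M_n(p_i \mathcal M_i p_i)$ inside the semifinite AFP $\mathbf M_n(\mathcal M) = \mathbf M_n(\mathcal M_1) \ast_{\mathbf M_n(\mathcal B)} \mathbf M_n(\mathcal M_2)$ (amplification preserves the amalgamated free product structure and the semifiniteness of the trace on the amalgam). Either for some $q \in \Proj(\mathcal B)$ we have $\varphi(\mathcal A) \preceq_{\mathbf M_n(\mathcal M_i)} \mathbf M_n(q \mathcal B q)$, and then pulling back along $\varphi$ via the intertwiner $v$ gives $\mathcal A \preceq_{\mathcal M} q \mathcal B q$ — this is exactly the mechanism used in the proof of Proposition \ref{intertwining-afp} (compose the two intertwiners, check the relevant conditional expectation of $w^*v^*vw$ is nonzero using a support-projection normalization as in \cite[Remark 3.8]{vaes-bimodules}, take the polar part). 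This is the first alternative. Otherwise, by Lemma \ref{intertwining-general-HR} there is a net of unitaries $w_k \in \mathcal U(\varphi(\mathcal A))$ with $\|E_{\mathbf M_n(\mathcal B)}(x^* w_k y)\|_{2,\Tr} \to 0$ for all $x,y$ in the relevant corner of $\mathbf M_n(\mathcal M_i)$, so Theorem \ref{controlling-afp} applies and tells us that any $\varphi(\mathcal A)$–$(q_0\mathbf M_n(\mathcal M_i)q_0)$-subbimodule of $\rL^2(q_0 \mathbf M_n(\mathcal M) q_0)$ of finite right dimension sits inside $\rL^2(q_0 \mathbf M_n(\mathcal M_i) q_0)$; in particular $\mathcal N_{q_0 \mathbf M_n(\mathcal M)q_0}(\varphi(\mathcal A))\dpr \subset q_0 \mathbf M_n(\mathcal M_i) q_0$.

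To finish I would transport the normalizer of $\mathcal A$ through $v$: for $u \in \mathcal N_{p\mathcal Mp}(\mathcal A)$, the element $v^* u v$ (after suitable cutting) normalizes $\varphi(\mathcal A)$ inside $q_0 \mathbf M_n(\mathcal M) q_0$, hence lies in $q_0\mathbf M_n(\mathcal M_i)q_0$ by the previous paragraph; more precisely $v \mathcal N_{p\mathcal Mp}(\mathcal A)\dpr v^* \subset q_0 \mathbf M_n(\mathcal M_i)q_0$, and since $v \in \mathbf M_{1,n}(\C)\otimes p\mathcal M(p_i\otimes 1_n)$ implements an intertwining of $\mathcal N_{p\mathcal Mp}(\mathcal A)\dpr$ into $p_i\mathcal M_ip_i$, we conclude $\mathcal N_{p\mathcal Mp}(\mathcal A)\dpr \preceq_{\mathcal M} p_i\mathcal M_ip_i$. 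I expect the main obstacle to be the bookkeeping around the corner $q_0 = v^*v$ and the central-support reductions needed so that ``$\mathcal N(\mathcal A)$ normalizes $\mathcal A e$'' and ``$v^*uv$ normalizes $\varphi(\mathcal A)$'' hold honestly rather than just up to cutting — in particular ensuring that passing from $\mathcal A$ to $\mathcal A e$ does not lose normalizing unitaries, which is the point where one typically invokes that $e$ can be chosen with central support $1$ in $\mathcal A' \cap p\mathcal Mp$ or argues componentwise over the central decomposition. The AFP-specific content (Theorem \ref{controlling-afp} plus the amplification trick) is routine once that scaffolding is in place.
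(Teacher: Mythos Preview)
Your approach is essentially the paper's, and the dichotomy via Theorem \ref{controlling-afp} applied to $\varphi(\mathcal A)$ inside the amplified AFP is exactly right. Two simplifications are worth noting. First, the reduction to $\mathcal A e$ with $e = vv^*$ and the attendant central-support bookkeeping you flag as the ``main obstacle'' is unnecessary: the paper works directly with the intertwiner $v$ and never replaces $\mathcal A$ by a corner. Second, and more substantively, $v^* u v$ does \emph{not} normalize $\varphi(\mathcal A)$, even after cutting --- it is not a unitary in $q_0\mathbf M_n(\mathcal M)q_0$ and satisfies only the intertwining relation $\varphi(uau^*)\,v^*uv = v^*uv\,\varphi(a)$. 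So invoking the normalizer consequence $\mathcal N_{q_0\mathbf M_n(\mathcal M)q_0}(\varphi(\mathcal A))'' \subset q_0\mathbf M_n(\mathcal M_i)q_0$ is not enough; you must use the full bimodule conclusion of Theorem \ref{controlling-afp} (which you did state correctly): the closure of $v^*uv\,\mathbf M_n(p_i\mathcal M_ip_i)$ is a $\varphi(\mathcal A)$-$\mathbf M_n(p_i\mathcal M_ip_i)$-subbimodule of finite right dimension, hence lies in $\rL^2(\mathbf M_n(p_i\mathcal M_ip_i))$, giving $v^*uv \in \mathbf M_n(p_i\mathcal M_ip_i)$. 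With that correction (and fixing the typo $v\,\mathcal N\,v^*$ to $v^*\,\mathcal N\,v$), your argument is the paper's proof.
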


\begin{proof}
We assume that for all $q \in \Proj(\mathcal B)$, we have $\mathcal A \npreceq_{\mathcal M} q\mathcal B q$ and show that necessarily $\mathcal N_{p \mathcal M p}(\mathcal A)\dpr \preceq_{\mathcal M} p_i \mathcal M_i p_i$.

Since $\mathcal A \preceq_{\mathcal M} p_i \mathcal M_i p_i$, there exist $n \geq 1$, a nonzero partial isometry $v \in \mathbf M_{1, n}(\C) \otimes p \mathcal M p_i$ and a possibly nonunital normal $\ast$-homomorphism $\varphi :  \mathcal A  \to \mathbf M_n( p_i \mathcal M_i p_i )$ such that $a v= v \varphi(a)$ for all $a \in \mathcal A$. Since we also have $\mathcal A \npreceq_{\mathcal M} q \mathcal B q$ for all $q \in \Proj(\mathcal B)$, a reasoning entirely analogous to the one of the proof of Proposition \ref{intertwining-afp} allows us to further assume that $\varphi(\mathcal A) \npreceq_{\mathbf M_n(\mathcal M_i)} \mathbf M_n(q \mathcal B q)$ for all $q \in \Proj(\mathcal B)$.

Let $u \in \mathcal N_{p \mathcal M p}(\mathcal A)$. Then for all $a \in \mathcal A$, we have
$$v^* u v \varphi(a) = v u a v = v^*(u a u^*) uv = \varphi(u a u^*) v^* u v.$$
By Theorem \ref{controlling-afp} and Lemma \ref{intertwining-general-HR}, we get $v^* u v \in \mathbf M_n(p_i \mathcal M_i p_i)$ for all $u \in \mathcal N_{p \mathcal M p}(\mathcal A)$, hence $v^* \mathcal N_{p \mathcal M p}(\mathcal A)\dpr v \subset p_i \mathcal M_i p_i$. Therefore, we have $\mathcal N_{p \mathcal M p}(\mathcal A)\dpr \preceq_{\mathcal M} p_i \mathcal M_i p_i$.
\end{proof}

\subsection{Hilbert bimodules}

Let $M$ and $N$ be any von Neumann algebras. Recall that an $M$-$N$-{\em bimodule} $\mathcal H$ is a Hilbert space endowed with two commuting normal $\ast$-representations $\pi : M \to \mathbf B(\mathcal H)$ and $\rho : N^{\op} \to \mathbf B(\mathcal H)$. We then define $\pi_{\mathcal H} : M \otimes_{\alg} N^{\op} \to \mathbf B(\mathcal H)$ by $\pi_{\mathcal H}(x \otimes y^{\op}) = \pi(x) \rho(y^{\op})$ for all $x \in M$ and all $y \in N$. We will simply write $x \xi y = \pi_{\mathcal H}(x \otimes y^{\op}) \xi$ for all $x \in M$, all $y \in N$ and all $\xi \in \mathcal H$. 

Let $\mathcal H$ and $\mathcal K$ be $M$-$N$-bimodules. Following \cite[Appendix V.B]{Co94}, we say that $\mathcal K$ is {\em weakly contained} in $\mathcal H$ and write $\mathcal K \subset_{\weak} \mathcal H$ if $\|\pi_{\mathcal K}(T) \|_\infty \leq \|\pi_{\mathcal H}(T)\|_\infty$ for all $T \in M \otimes_{\alg} N^{\op}$. We simply denote by $(N, \rL^2(N), J, \mathfrak P)$ the standard form of $N$ (see e.g.\ \cite[Chapter IX.1]{Ta03}). Then the $N$-$N$-bimodule $\rL^2(N)$ with left and right action given by $x \xi y = x J y^*J \xi$ is the {\em trivial} $N$-$N$-bimodule while the $N$-$N$-bimodule $\rL^2(N) \otimes \rL^2(N)$ with left and right action given by $x (\xi \otimes \eta) y = x \xi \otimes J y^* J \eta$ is the {\em coarse} $N$-$N$-bimodule.

Recall that a von Neumann algebra $N$ is {\em amenable} if as $N$-$N$-bimodules, we have $\rL^2(N) \subset_{\weak} \rL^2(N) \otimes \rL^2(N)$. Equivalently, there exists a norm one projection $\Phi : \mathbf B(\rL^2(N)) \to N$.

For any von Neumann algebras $B, M, N$, any $M$-$B$-bimodule $\mathcal H$ and any $B$-$N$-bimodule $\mathcal K$, there is a well defined $M$-$N$-bimodule $\mathcal H \otimes_B \mathcal K$ called the Connes's fusion tensor product of $\mathcal H$ and $\mathcal K$ over $B$.
We refer to \cite[Appendix V.B]{Co94} and \cite[Section 1]{AD93} for more details regarding this construction.

We will be using the following well known fact (see \cite[Lemma 1.7]{AD93}). For any von Neumann algebras $B, M, N$ such that $B$ is amenable, any $M$-$B$-bimodule $\mathcal H$ and any $B$-$N$-bimodule $\mathcal K$, we have, as $M$-$N$-bimodules,
$$\mathcal H \otimes_B \mathcal K \subset_{\weak} \mathcal H \otimes \mathcal K.$$

\subsection{Relative amenability}

Let $M$ be any von Neumann algebra. Denote by $(M, \rL^2(M), J, \mathfrak P)$ the standard form of $M$.  Let $P \subset 1_PM1_P$ (resp.\ $Q \subset M$) be a von Neumann subalgebra with expectation $E_P : 1_P M 1_P \to P$ (resp.\ $E_Q : M \to Q$). The {\em basic construction} $\langle M, Q\rangle$  is the von Neumann algebra $(J Q J)' \cap \mathbf B(H)$. Following \cite[Section 2.1]{OP07}, we say that $P$ is {\em amenable relative to} $Q$ {\em inside} $M$ if there exists a norm one projection $\Phi : 1_P \langle M, Q\rangle 1_P \to P$ such that $\Phi | 1_P M 1_P = E_P$.

In the case when $(M, \tau)$ is a tracial von Neumann algebra and the conditional expectation $E_P : M \to P$ (resp.\ $E_Q : M \to Q$) is $\tau$-preserving, the basic construction that we denote by $\langle M, e_Q \rangle$ coincides with the von Neumann algebra generated by $M$ and the orthogonal projection $e_Q : \rL^2(M, \tau) \to \rL^2(Q, \tau | Q)$. Observe that $\langle M, e_Q \rangle$ comes with a semifinite faithful normal trace given by $\Tr(x e_Q y) = \tau(xy)$ for all $x, y \in M$. Then \cite[Theorem 2.1]{OP07} shows that $P$ is amenable relative to $Q$ inside $M$ if and only if there exists a net of vectors $\xi_n \in \rL^2(\langle M, e_Q\rangle, \Tr)$ such that $\lim_n \langle x \xi_n, \xi_n\rangle_{\Tr} = \tau(x)$ for all $x \in 1_P M 1_P$ and $\lim_n \|y \xi_n - \xi_n y\|_{2, \Tr} = 0$ for all $y \in P$.

\subsection{Noncommutative flow of weights}

Let $(M, \varphi)$ be a von Neumann algebra together with a faithful normal state. Denote by $M^\varphi$ the centralizer of $\varphi$ and by $M \rtimes_\varphi \R$ the {\it continuous core} of $M$, that is, the crossed product of $M$ with the modular automorphism group $(\sigma_t^\vphi)_{t \in \R}$ associated with the faithful normal state $\varphi$. We have a canonical $\ast$-embedding $\pi_\vphi : M \recht M \rtimes_\vphi \R$ and a canonical group of unitaries $(\lambda_\vphi(s))_{s \in \R}$ in $M \rtimes_\vphi \R$ such that
$$\pi_\vphi(\si^\vphi_s(x)) = \lambda_\vphi(s) \, \pi_\vphi(x) \, \lambda_\vphi(s)^* \quad\text{for all}\;\; x \in M, s \in \R.$$
The unitaries $(\lambda_\vphi(s))_{s \in \R}$ generate a copy of $\rL(\R)$ inside $M \rtimes_\vphi \R$.

We denote by $\vphih$ the \emph{dual weight} on $M \rtimes_\vphi \R$ (see \cite[Definition X.1.16]{Ta03}), which is a semifinite faithful normal weight on $M \rtimes_\vphi \R$ whose modular automorphism group $(\sigma_t^{\vphih})_{t \in \R}$ satisfies
$$
\sigma_t^{\widehat{\varphi}}(\pi_\varphi(x)) = \pi_\varphi(\sigma_t^\varphi(x)) \;\text{for all}\; x \in M \quad\text{and}\quad
\sigma_t^{\widehat{\varphi}}(\lambda_\varphi(s)) = \lambda_\varphi(s) \;\text{for all}\; s \in \R.
$$
We denote by $(\theta^\varphi_t)_{t \in \R}$ the \emph{dual action} on $M \rtimes_\vphi \R$, given by
$$\theta^\vphi_t(\pi_\vphi(x)) = \pi_\vphi(x) \;\text{for all}\; x \in M \quad\text{and}\quad \theta^\vphi_t(\lambda_\vphi(s)) = \exp({\rm i} t s) \lambda_\vphi(s) \;\text{for all}\; s \in \R.$$
Denote by $h_\varphi$ the unique nonsingular positive selfadjoint operator affiliated with $\rL(\R) \subset M \rtimes_\vphi \R$ such that $h_\varphi^{{\rm i}s} = \lambda_\varphi(s)$ for all $s \in \R$. Then $\Tr_\varphi = \widehat{\varphi}(h_\varphi^{-1} \cdot)$ is a semifinite faithful normal trace on $M \rtimes_\varphi \R$ and the dual action $\theta^\varphi$ {\it scales} the trace $\Tr_\varphi$:
\begin{equation*}
\Tr_\varphi \circ \theta^\varphi_t = \exp(t) \Tr_\varphi, \forall t \in \R.
\end{equation*}
Note that $\Tr_\vphi$ is semifinite on $\rL(\R) \subset M \rtimes_\vphi \R$. Moreover, the canonical faithful normal conditional expectation $E_{\rL(\R)} : M \rtimes_\varphi \R \to \rL(\R)$ defined by $E_{\rL(\R)}(x \lambda_\varphi(s)) = \varphi(x) \lambda_\varphi(s)$ preserves the trace $\Tr_\varphi$, that is,
\begin{equation*}
\Tr_\varphi \circ E_{\rL(\R)} = \Tr_\varphi.
\end{equation*}

Because of Connes's Radon-Nikodym cocycle theorem (see \cite[Theorem VIII.3.3]{Ta03}), the semifinite von Neumann algebra $M \rtimes_\vphi \R$, together with its trace $\Tr_\vphi$ and trace-scaling action $\theta^\vphi$, ``does not depend'' on the choice of $\vphi$ in the following precise sense. If $\psi$ is another faithful normal state on $M$, there is a canonical surjective $*$-isomorphism
$\Pi_{\psi,\vphi} : M \rtimes_\vphi \R \recht M \rtimes_\psi \R$ such that $\Pi_{\psi,\vphi} \circ \pi_\vphi = \pi_\psi$, $\Tr_\psi \circ \Pi_{\psi,\vphi} = \Tr_\vphi$ and $\Pi_{\psi,\vphi} \circ \theta^\vphi = \theta^\psi \circ \Pi_{\psi,\vphi}$. Note however that $\Pi_{\psi,\vphi}$ does not map the subalgebra $\rL(\R) \subset M \rtimes_\vphi \R$ onto the subalgebra $\rL(\R) \subset M \rtimes_\psi \R$.

Altogether we can abstractly consider the \emph{continuous core} $(\core(M),\theta,\Tr)$, where $\core(M)$ is a von Neumann algebra with a faithful normal semifinite trace $\Tr$, $\theta$ is a trace-scaling action of $\R$ on $(\core(M),\Tr)$ and $\core(M)$ contains a copy of $M$. Whenever $\vphi$ is a faithful normal state on $M$, we get a canonical surjective $\ast$-isomorphism $\Pi_\vphi : M \rtimes_\vphi \R \recht \core(M)$ such that
$$\Pi_{\varphi} \circ \theta^\varphi = \theta \circ \Pi_{\varphi},\quad \Tr_\varphi = \Tr \circ \Pi_{\varphi}, \quad \Pi_{\varphi}(\pi_\varphi(x)) = x \;\; \forall x \in M.$$
A more functorial construction of the continuous core, known as the {\it noncommutative flow of weights} can be given, see \cite{connes73,connestak,falcone}.

By Takesaki's duality theorem \cite[Theorem X.2.3]{Ta03}, we have that $\core(M) \rtimes_{\theta} \R \cong M \ovt \mathbf B(\rL^2(\R))$. In particular, by \cite[Proposition 3.4]{AD93}, $M$ is amenable if and only if $\core(M)$ is amenable.

If $P \subset 1_P M 1_P$ is a von Neumann subalgebra with expectation, we have a canonical trace preserving inclusion $\core(P) \subset 1_P \core(M) 1_P$.

We will also frequently use the following well-known fact: if $A \subset M$ is a Cartan subalgebra then $\core(A) \subset \core(M)$ is still a Cartan subalgebra. For a proof of this fact, see e.g.\ \cite[Proposition 2.6]{houdayer-ricard}.

\begin{prop}\label{amenable-core}
Let $M$ be any von Neumann algebra with no amenable direct summand. Then the continuous core $\core(M)$ has no amenable direct summand either.
\end{prop}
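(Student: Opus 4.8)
The plan is to contrapose: assuming $\core(M)$ has an amenable direct summand, I will produce an amenable direct summand of $M$. Concretely, suppose $z \in \mathcal Z(\core(M))$ is a nonzero central projection such that $z\core(M)z$ is amenable. The first observation is that, since $\core(M) = M \rtimes_\varphi \R$ carries the trace-scaling flow $\theta = \theta^\varphi$ and $\mathcal Z(\core(M))$ is globally $\theta$-invariant, the projection $\bigvee_{t \in \R}\theta_t(z)$ is a nonzero $\theta$-invariant central projection whose corner is still amenable (a union over $t$ of amenable corners $\theta_t(z)\core(M)\theta_t(z) = \theta_t(z\core(M)z)$, each amenable because $\theta_t$ is a $\ast$-isomorphism, and amenability passes to increasing unions and to central direct sums). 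So I may assume from the start that $z$ is $\theta$-invariant.

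A $\theta$-invariant central projection $z \in \mathcal Z(\core(M))$ corresponds, under the noncommutative flow of weights, to a central projection of $M$ itself: indeed the fixed-point algebra considerations, or more simply the fact that $\core(M)^\theta$-type data descends, show that $z = \Pi_\varphi(\pi_\varphi(p))$ for a unique central projection $p \in \mathcal Z(M)$, and moreover $z\core(M)z = \core(pMp)$ as semifinite von Neumann algebras with the restricted trace-scaling flow. (This is exactly the statement that the continuous core construction is compatible with cutting by central projections of $M$, which is the functoriality recorded in the paragraph on the noncommutative flow of weights; a $\theta$-invariant central projection of the core is precisely one coming from $M$.) Thus $\core(pMp)$ is amenable.

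Now invoke the duality/amenability equivalence already stated in the excerpt: by Takesaki duality $\core(N) \rtimes_\theta \R \cong N \ovt \B(\rL^2(\R))$, and by \cite[Proposition 3.4]{AD93} a von Neumann algebra $N$ is amenable if and only if $\core(N)$ is amenable. Applying this with $N = pMp$: since $\core(pMp)$ is amenable, $pMp$ is amenable, hence $pMp$ is an amenable direct summand of $M$ (with $p \in \mathcal Z(M)$ nonzero). This contradicts the hypothesis that $M$ has no amenable direct summand, completing the proof.

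The step I expect to be the main obstacle is the middle one: matching $\theta$-invariant central projections of $\core(M)$ with central projections of $M$, and identifying the corner $z\core(M)z$ with $\core(pMp)$. One clean way to handle this is to work in a fixed picture $\core(M) = M \rtimes_\varphi \R$ and note that $\mathcal Z(M \rtimes_\varphi \R)^\theta$ — the $\theta$-fixed centre — can be computed: a $\theta$-fixed element of $\mathcal Z(M\rtimes_\varphi\R)$ commutes with all $\lambda_\varphi(s)$ and with $\pi_\varphi(M)$, and the $\theta$-fixedness forces it to lie in $\pi_\varphi(M)$ (since $\theta_t$ acts on the $\rL(\R)$-direction and is trivial on $\pi_\varphi(M)$, a spectral/Fourier argument on the $\R$-grading kills everything with nontrivial $\rL(\R)$-component); being central in $M\rtimes_\varphi\R$ and in $\pi_\varphi(M)$, it lies in $\pi_\varphi(\mathcal Z(M))$. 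Once $z = \pi_\varphi(p)$ with $p \in \mathcal Z(M)$, the identification $z(M\rtimes_\varphi\R)z = pMp \rtimes_\varphi \R = \core(pMp)$ is immediate because $p$ is $\sigma^\varphi$-invariant. If one prefers to avoid the reduction to $\theta$-invariant $z$ altogether, an alternative is to observe directly that any amenable corner of $\core(M)$ yields, via $\core(M)\rtimes_\theta\R \cong M\ovt\B(\rL^2(\R))$ and heredity of amenability, an amenable corner of $M\ovt\B(\rL^2(\R))$ cut by a projection with nonzero central support, whence an amenable direct summand of $M$; this routes everything through \cite[Proposition 3.4]{AD93} and avoids analyzing the fixed-point centre by hand.
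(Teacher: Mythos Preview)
Your proof is correct and follows essentially the same route as the paper's: take the supremum $e = \bigvee_{t}\theta_t(z)$ to obtain a $\theta$-invariant central projection, identify $e$ with a projection in $\mathcal Z(M)$ (the paper cites \cite[Theorem XII.6.10]{Ta03} for this), observe $\core(M)e = \core(Me)$ is amenable by passage to direct limits, and then conclude $Me$ is amenable via Takesaki duality. The only cosmetic difference is that you spell out the fixed-point argument for $\mathcal Z(\core(M))^\theta = \mathcal Z(M)$ by hand where the paper cites Takesaki.
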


\begin{proof}
Assume that $\core(M)$ has an amenable direct summand. Let $z \in \mathcal Z(\core(M))$ be a nonzero projection such that $\core(M) z$ is amenable. Denote by $\theta : \R \curvearrowright \core(M)$ the dual action which scales the trace $\Tr$. Put $e = \bigvee_{t \in \R} \theta_t(z)$. Observe that $e \in \mathcal Z(\core (M))$ and $\theta_t(e) = e$ for all $t \in \R$. By \cite[Theorem ${\rm XII}$.6.10]{Ta03}, we have $e \in M \cap \mathcal Z(\core(M))$, hence $e \in \mathcal Z(M)$. We canonically have $\core(M) e = \core(M e)$.

Since amenability is stable under direct limits, we have that $\core(M) e$ is amenable, hence $\core(M e)$ is amenable. Applying again \cite[Theorem ${\rm XII}$.6.10]{Ta03}, we have $\core(M e) \rtimes_\theta \R \cong (M e) \ovt \mathbf B(\rL^2(\R))$. We get that $\core(M e) \rtimes_\theta \R$ is amenable and so is $Me$. Therefore, $M$ has an amenable direct summand.
\end{proof}

We will frequently use the following:

\begin{nota}\label{notation}
Let $A \subset M$ (resp.\ $B \subset M$) be a von Neumann subalgebra with expectation $E_A : M \to A$ (resp.\ $E_B : M \to B$) of a given von Neumann algebra $M$.  Assume moreover that $A$ and $B$ are both tracial. Let $\tau_A$ be a faithful normal trace on $A$
(resp.\ $\tau_B$ on $B$) and write $\varphi_A = \tau_A \circ E_A$ (resp.\ $\varphi_B = \tau_B \circ E_B$). 
Write $\pi_{\varphi_A} : M \to M \rtimes_{\varphi_A} \R$ (resp.\ $\pi_{\varphi_B} : M \to M \rtimes_{\varphi_B} \R$) for the canonical $\ast$-representation of $M$ into its continuous core associated with $\varphi_A$ (resp.\ $\varphi_B$).

By Connes's Radon-Nikodym cocycle theorem, there is a surjective $\ast$-isomorphism
$$\Pi_{\varphi_B, \varphi_A} : M \rtimes_{\varphi_A} \R \to M \rtimes_{\varphi_B} \R$$
which intertwines the dual actions, that is, $\theta^{\varphi_B} \circ \Pi_{\varphi_B, \varphi_A} = \Pi_{\varphi_B, \varphi_A} \circ \theta^{\varphi_A}$, and preserves the faithful normal semifinite traces, that is, $\Tr_{\varphi_B} \circ \Pi_{\varphi_B, \varphi_A} = \Tr_{\varphi_A}$. In particular, we have $\Pi_{\varphi_B, \varphi_A}(\pi_{\varphi_A}(x)) = \pi_{\varphi_B}(x)$ for all $x \in M$. 

Put $\core(M) = M \rtimes_{\varphi_B} \R$, $\core(B) = B \rtimes_{\varphi_B} \R$ and $\core(A) = \Pi_{\varphi_B, \varphi_A}(A \rtimes_{\varphi_A} \R)$. We simply denote by $\Tr = \Tr_{\varphi_B}$ the canonical semifinite faithful normal trace on $\core(M)$.  Observe that $\Tr$ is still semifinite on $\mathcal Z(\core(A))$ and $\mathcal Z(\core(B))$.
\end{nota}

\begin{prop}\label{intertwining-core}
Assume that we are in the setup of Notation \ref{notation}. If $A \npreceq_M B$, then for all $p \in \Proj(\mathcal Z(\core(A)))$ and all $q \in \Proj(\mathcal Z(\core(B)))$, we have $\core(A) p \npreceq_{\core(M)} \core(B) q$.
\end{prop}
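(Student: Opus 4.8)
The plan is to construct directly, for each fixed $p\in\Proj(\mathcal Z(\core(A)))$, a net of unitaries in $\core(A)p$ that witnesses $\core(A)p\npreceq_{\core(M)}\core(B)q$ through Lemma~\ref{intertwining-general-HR}. The starting point is that $\varphi_A|_A=\tau_A$ and $\varphi_B|_B=\tau_B$ are traces, so $(\sigma^{\varphi_A}_t)$ and $(\sigma^{\varphi_B}_t)$ act trivially on $A$, resp.\ $B$. Consequently $A\rtimes_{\varphi_A}\R=\pi_{\varphi_A}(A)\ovt\rL(\R)$ and $\core(B)=B\rtimes_{\varphi_B}\R=\pi_{\varphi_B}(B)\ovt\rL(\R)$, with $\pi_{\varphi_B}(B)$ commuting with $\rL(\R)$; moreover $\Tr$ restricts on $\core(B)$ to the tensor product trace $\tau_B\ovt(\Tr|_{\rL(\R)})$, and the $\Tr$-preserving conditional expectation $E_{\core(B)}\colon\core(M)\to\core(B)$ is $\rL(\R)$-bimodular and satisfies $E_{\core(B)}(\pi_{\varphi_B}(m))=\pi_{\varphi_B}(E_B(m))$ for all $m\in M$. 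I will also use that for every $u\in\mathcal U(A)$ we have $\pi_{\varphi_A}(u)\in A\rtimes_{\varphi_A}\R$, whence $\pi_{\varphi_B}(u)=\Pi_{\varphi_B,\varphi_A}(\pi_{\varphi_A}(u))\in\core(A)$.

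Next, since $A\npreceq_M B$, Theorem~\ref{intertwining-general} provides a net $(u_k)$ in $\mathcal U(A)$ with $E_B(x^*u_ky)\to 0$ $*$-strongly for all $x,y\in M$; as $E_B(x^*u_ky)\in B$ and $*$-strong convergence is finer than $\|\cdot\|_{2,\tau_B}$-convergence on bounded subsets of $B$, we get $\|E_B(x^*u_ky)\|_{2,\tau_B}\to 0$ for all $x,y\in M$. Put $w_k=p\,\pi_{\varphi_B}(u_k)$; since $\pi_{\varphi_B}(u_k)\in\core(A)$ and $p\in\mathcal Z(\core(A))$, this is a unitary in $\core(A)p$. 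It then suffices to prove $\lim_k\|E_{\core(B)}(x^*w_ky)\|_{2,\Tr}=0$ for all $x,y\in p\core(M)$: by Lemma~\ref{intertwining-general-HR} (applied with $\mathcal M=\core(M)$, $\mathcal B=\core(B)$, $\mathcal A=\core(A)p$) this yields $\core(A)p\npreceq_{\core(M)}q'\core(B)q'$ for \emph{every} $q'\in\Proj(\core(B))$, hence in particular $\core(A)p\npreceq_{\core(M)}\core(B)q$ since $q\core(B)q=\core(B)q$ for $q$ central. For $x,y\in p\core(M)$ one has $x^*p=x^*$, so $x^*w_ky=x^*\pi_{\varphi_B}(u_k)y$, and since $\Tr(x^*x)\le\|x\|_\infty^2\Tr(p)<\infty$ (and likewise for $y$) it is enough to show $\lim_k\|E_{\core(B)}(a^*\pi_{\varphi_B}(u_k)b)\|_{2,\Tr}=0$ for all $a,b\in\core(M)$ of finite $\Tr$-norm.

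For this final point I would reduce, by density of the linear span $\mathcal S_0$ of $\{\pi_{\varphi_B}(m)f : m\in M,\ f\in\rL(\R),\ \Tr(f^*f)<\infty\}$ in $\rL^2(\core(M),\Tr)$, to $a=\sum_i\pi_{\varphi_B}(m_i)f_i$ and $b=\sum_j\pi_{\varphi_B}(n_j)g_j$ in $\mathcal S_0$ (choosing the approximant of $a$ first and that of $b$ afterwards, so that no operator-norm control of the approximants is needed). Then $a^*\pi_{\varphi_B}(u_k)b=\sum_{i,j}f_i^*\pi_{\varphi_B}(m_i^*u_kn_j)g_j$, and applying $E_{\core(B)}$ via its $\rL(\R)$-bimodularity, the relation $E_{\core(B)}\circ\pi_{\varphi_B}=\pi_{\varphi_B}\circ E_B$, and the commutation of $\pi_{\varphi_B}(B)$ with $\rL(\R)$ gives $E_{\core(B)}(a^*\pi_{\varphi_B}(u_k)b)=\sum_{i,j}\pi_{\varphi_B}(E_B(m_i^*u_kn_j))\,f_i^*g_j$; since the two factors commute and $\Tr|_{\core(B)}=\tau_B\ovt(\Tr|_{\rL(\R)})$, the $(i,j)$-summand has $\|\cdot\|_{2,\Tr}$-norm equal to $\|E_B(m_i^*u_kn_j)\|_{2,\tau_B}\,\|f_i^*g_j\|_{2,\Tr}$, which tends to $0$ because $\|f_i^*g_j\|_{2,\Tr}<\infty$ and $\|E_B(m_i^*u_kn_j)\|_{2,\tau_B}\to 0$. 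The steps I expect to require the most care are: (i) the structural identifications of $\core(B)$, of $\Tr|_{\core(B)}$ and of $E_{\core(B)}$ recorded above — standard facts about the noncommutative flow of weights once $B\subset M^{\varphi_B}$, but worth spelling out; and (ii) the density reduction to $\mathcal S_0$, where one must keep in mind that $\Tr$ is only semifinite (so one first cuts down by finite-trace projections of $\rL(\R)$ and applies Kaplansky's density theorem in the resulting finite corners) and must order the two approximations as indicated.
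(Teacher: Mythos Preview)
Your proof is correct and follows essentially the same approach as the paper: both lift the witnessing net from $\mathcal U(A)$ to $\mathcal U(\core(A)p)$ via $\pi_{\varphi_B}$, compute $E_{\core(B)}$ on an algebraic span using $\rL(\R)$-bimodularity and $E_{\core(B)}\circ\pi_{\varphi_B}=\pi_{\varphi_B}\circ E_B$, and then extend by a density/Kaplansky argument. The only cosmetic difference is that the paper localizes with the finite-trace projection $q$ (cutting $qE_{\core(B)}(\cdots)q$ and approximating $px,py$ by $\core(M)_{\alg}$ in operator norm via Kaplansky), whereas you localize with $p$ through Lemma~\ref{intertwining-general-HR} and approximate in $\|\cdot\|_{2,\Tr}$ by $\mathcal S_0=\core(M)_{\alg}\cdot(\rL(\R)\cap\rL^2)$; both packagings are equivalent.
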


\begin{proof}
Let $v_k \in \mathcal U(A)$ be a net such that $E_B(x^* v_k y) \to 0$ $\ast$-strongly for all $x, y \in M$. Recall that $\core(M) = M \rtimes_{\varphi_B} \R$, $\core(B) = B \rtimes_{\varphi_B} \R$ and $\core(A) = \Pi_{\varphi_B, \varphi_A}(A \rtimes_{\varphi_A} \R)$. Let $p \in \Proj(\mathcal Z(\core(A)))$ and $q \in \Proj(\mathcal Z(\core(B)))$. Observe that since $p$ commutes with every element in $\core(A)$, $p$ commutes with every element in $\Pi_{\varphi_B, \varphi_A}(\pi_{\varphi_A}(A)) = \pi_{\varphi_B}(A) \subset \core(A)$. Then $w_k = \Pi_{\varphi_B, \varphi_A}(\pi_{\varphi_A}(v_k)) p = \pi_{\varphi_B}(v_k)p$ is a net of unitaries in $\mathcal U(\core(A) p)$.

Write $\core(M)_{\alg} = M \rtimes_{\varphi_B}^{\alg} \R$ for the algebraic crossed product, that is, the linear span of $\{\pi_{\varphi_B}(x) \lambda_{\varphi_B}(t) : x \in M, t \in \R\}$. Observe that $\core(M)_{\alg}$ is a dense unital $\ast$-subalgebra of $\core(M)$. We have $E_{\core(B)}(x^* \pi_{\varphi_B}(v_k) y) \to 0$ $\ast$-strongly for all $x, y \in \core(M)_{\alg}$. Since $q \in \Proj(\core(B))$, we have
$$\|E_{ \core(B) q}(q \, x^* \pi_{\varphi_B}(v_k)y \, q) \|_{2, \Tr} = \|q E_{\core(B)}(x^* \pi_{\varphi_B}(v_k) y) q \|_{2, \Tr} \to 0, \forall x, y \in \core(M)_{\alg}.$$

Fix now $x, y \in \Ball(\core(M))$. By Kaplansky density theorem, choose a net $(x_i)_{i \in I}$ (resp.\ $(y_j)_{j \in J}$) in $\Ball(\core(M)_{\alg})$ such that $x_i \to px$ (resp.\ $y_j \to py$) $\ast$-strongly. Let $\varepsilon > 0$. Since $q \in \Proj(\core(B))$, we can choose $(i, j) \in I \times J$ such that 
$$\|(py - y_j) q\|_{2, \Tr} + \|q (x^*p - x_j)\|_{2, \Tr} < \varepsilon.$$
Therefore, by triangle inequality, we obtain
$$
\limsup_k \|E_{\core(B) q}(q \, x^* \, p\pi_{\varphi_B}(v_k)p \, y \, q) \|_{2, \Tr}  \leq \limsup_k \|E_{ \core(B) q}(q \, x_i^*\pi_{\varphi_B}(v_k) y_j \, q) \|_{2, \Tr} +  \varepsilon \leq \varepsilon.$$
Since $\varepsilon > 0$ is arbitrary, we get $\lim_k \|E_{\core(B) q}(q x^* p \, w_k \, p y  q) \|_{2, \Tr} = 0$. This finally proves that $\core(A) p \npreceq_{\core(M)} \core(B) q$.
\end{proof}

\begin{example}
We emphasize two well-known examples that will be extensively used in this paper.
\begin{enumerate}
\item Let $\Gamma \curvearrowright (X, \mu)$ be any nonsingular action on a standard measure space. Define the \emph{Maharam extension} (see \cite{maharam}) $\Gamma \curvearrowright (X \times \R, m)$ by
$$g \cdot (x, t) = \left( gx, t + \log\left( \frac{{\rm d} (\mu \circ g^{-1})}{{\rm d} \mu}(x)\right)\right),$$
where ${\rm d}m = {\rm d} \mu \times \exp(t){\rm d}t$. It is easy to see that the action $\Gamma \curvearrowright X \times \R$ preserves the infinite measure $m$ and we moreover have that
$$\core(\rL^\infty(X) \rtimes \Gamma) = \rL^\infty(X \times \R) \rtimes \Gamma.$$

\item Let $(M, E) = (M_1, E_1) \ast_B (M_2, E_2)$ be any amalgamated free product von Neumann algebra.  Fix a faithful normal state $\varphi$ on $B$. We still denote by $\varphi$ the faithful normal state $\varphi \circ E$ on $M$. We realize the continuous core of $M$ as $\core(M) = M \rtimes_\vphi \R$. Likewise, if we denote by $\varphi_i = \varphi \circ E_i$ the corresponding state on $M_i$, we realize the continuous core of $M_i$ as $\core(M_i) = M_i \rtimes_{\vphi_i} \R$. We denote by $\core(E) : \core(M) \to \core(B)$ (resp.\ $\core(E_i) : \core(M_i) \to \core(B)$) the canonical trace preserving faithful normal conditional expectation. Recall from \cite[Section 2]{ueda-pacific} that $\si_t^\vphi(M_i) = M_i$ for all $t \in \R$ and all $i \in \{1, 2\}$, hence
$$(\core(M), \core(E)) = (\core(M_1), \core(E_1)) \ast_{\core(B)} (\core(M_2), \core(E_2)).$$
Moreover, $\core(M)$ is a semifinite amalgamated free product von Neumann algebra.
\end{enumerate}
\end{example}

\section{Intertwining subalgebras inside semifinite AFP von Neumann algebras}

\subsection{Malleable deformation on semifinite AFP von Neumann algebras}\label{section-malleable}
First, we recall the construction of the malleable deformation on amalgamated free product von Neumann algebras discovered in \cite[Section 2]{IPP05}.

Let $(\mathcal M, E) = (\mathcal M_1, E_1) \ast_{\mathcal B} (\mathcal M_2, E_2)$ be any semifinite amalgamated free product von Neumann algebra with semifinite faithful normal trace $\Tr$. We will simply write $\mathcal M = \mathcal M_1 \ast_{\mathcal B} \mathcal M_2$ when no confusion is possible. Put $\widetilde {\mathcal M} = \mathcal M \ast_{\mathcal B} (\mathcal B \ovt \rL(\F_2))$ and observe that $\widetilde{\mathcal M}$ is still a semifinite amalgamated free product von Neumann algebra. We still denote by $\Tr$ the semifinite faithful normal trace on $\widetilde{\mathcal M}$. Let $u_1, u_2 \in \mathcal U(\rL(\mathbf F_2))$ be the canonical Haar unitaries generating $\rL(\F_2)$. Observe that we can decompose $\widetilde {\mathcal M} = \widetilde {\mathcal M}_1 \ast_{\mathcal B} \widetilde {\mathcal M}_2$ with $\widetilde {\mathcal M}_i = \mathcal M_i \ast_{\mathcal B} (\mathcal B \ovt \rL(\Z))$.

Consider the unique Borel function $f : \mathbf T \to (-\pi, \pi]$ such that $f(\exp({\rm i} t)) = t$
 for all $t \in (-\pi, \pi]$. Define the selfadjoint operators $h_1 = f(u_1)$ and $h_2 = f(u_2)$ so that $\exp({\rm i} u_1) = h_1$ and $\exp({\rm i} u_2) = h_2$. For every $t \in \R$, put $u_1^t = \exp({\rm i} t h_1)$ and $u_2^t = \exp({\rm i} t h_2)$. We have 
$$\tau(u_1^t) = \tau(u_2^t) =  \frac{\sin(\pi t)}{\pi t}, \forall t \in \R.$$
 
Define the one-parameter group of trace preserving $\ast$-automorphisms $\alpha_t \in \Aut(\widetilde {\mathcal M})$ by
$$\alpha_t = \Ad(u_1^t) \ast_{\mathcal B} \Ad(u_2^t), \forall t \in \R.$$
Define moreover the trace preserving $\ast$-automorphism $\beta \in \Aut(\widetilde{\mathcal M})$ by 
$$\beta = \id_{\mathcal M} \ast_\mathcal B (\id_{\mathcal B} \ovt \beta_0)$$
with $\beta_0(u_1) = u_1^*$ and $\beta_0(u_2) = u_2^*$. We have $\alpha_t \beta = \beta \alpha_{- t}$ for all $t \in \R$. Thus, $(\alpha_t, \beta)$ is a malleable deformation in the sense of Popa \cite{Po06b}.

We will be using the following notation throughout this section. 

\begin{nota}\label{notation2}
Put $\mathcal H_0 = \rL^2(\mathcal B, \Tr)$ and $\mathcal K_0 = \rL^2(\cB \ovt \rL(\F_2), \Tr)$. For $n \geq 1$, define $S_n = \lbrace (i_1 , \dots, i_n) : i_1 \neq \cdots \neq i_n \rbrace$ to be the set of the two alternating sequences of length $n$ made of $1$'s and $2$'s. For $\mathcal I = (i_1, \ldots, i_n) \in S_n$, denote by 
\begin{itemize}
\item $\mathcal H_{\mathcal I}$ the closed linear span in $\rL^2(\mathcal M, \Tr)$ of elements $x_1 \cdots x_n$, with  $x_j \in \cM_{i_j} \ominus \cB$ such that $\Tr(x_j^*x_j) < \infty$ for all $j \in \lbrace 1, \dots, n\rbrace$.
\item $\mathcal K_{\mathcal I}$ the closed linear span in $\rL^2(\widetilde{\mathcal M}, \Tr)$ of elements $u_{h_1}x_1 \cdots u_{h_n} x_n u_{h_{n+1}}$, with $h_j \in \F_2$ for all $j \in \{1, \dots, n + 1\}$ and $x_j \in \cM_{i_j} \ominus \cB$ such that $\Tr(x_j^*x_j) < \infty$ for all $j \in \lbrace 1, \dots, n\rbrace$.
\end{itemize}
We denote by $E_{\mathcal M} : \widetilde{\mathcal M} \to \mathcal M$ the unique trace preserving faithful normal conditional expectation as well as the orthogonal projection $\rL^2(\widetilde{\mathcal M}, \Tr) \to \rL^2(\mathcal M, \Tr)$. We still denote by $\alpha : \R \to \mathcal U(\rL^2(\widetilde {\mathcal M}, \Tr))$ the Koopman representation associated with the trace preserving action $\alpha : \R \to \Aut(\widetilde{\mathcal M})$.
\end{nota}

\begin{lem}
\label{orthogonalranges}
Let $m, n \geq 1$, $\cI = (i_1,\ldots,i_m) \in S_m$ and $\cJ = (j_1,\ldots,j_n) \in S_n$. Let $x_1 \in \cM_{i_1} \ominus \cB, \ldots, x_m \in \cM_{i_m} \ominus \cB$ and $y_1 \in \cM_{j_1} \ominus \cB, \ldots, y_n \in \cM_{j_n} \ominus \cB$ with $\Tr(a^*a) < \infty$ for $a = x_1,\ldots , x_m,y_1,\ldots,y_n$. Let $g_1,\ldots,g_{m+1},h_1,\ldots, h_{n+1} \in \F_2$. Then 
$$\langle u_{g_1}x_1 \cdots u_{g_{m}}x_m u_{g_{m+1}},u_{h_1}y_1\cdots u_{h_{n}}y_n u_{h_{n+1}} \rangle_{\rL^2(\widetilde{\mathcal M},\Tr)} =$$
$$\begin{cases} \langle x_1 \cdots x_m, y_1 \cdots y_n \rangle_{\rL^2(\mathcal M, \Tr)} \mbox{ if } m = n, \, \mathcal I=\mathcal J \mbox{ and } g_k = h_k,\forall k\in\{1,\ldots ,m+1\}; \\
0 \mbox{ otherwise}. \end{cases}$$
\end{lem}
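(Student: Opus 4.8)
The plan is to recognize $\widetilde{\mathcal M}$ as the iterated amalgamated free product $\mathcal M_1 \ast_{\mathcal B} \mathcal M_2 \ast_{\mathcal B} (\mathcal B \ovt \rL(\F_2))$ (using $\mathcal M = \mathcal M_1 \ast_{\mathcal B} \mathcal M_2$ and associativity of the amalgamated free product) and to read the statement off from the standard orthogonality of reduced words in the associated free-product Hilbert space. Concretely, $\rL^2(\widetilde{\mathcal M}, \Tr)$ decomposes orthogonally as $\rL^2(\mathcal B, \Tr)$ together with a sum, indexed by reduced colour words in the three ``colours'' $\mathcal M_1 \ominus \mathcal B$, $\mathcal M_2 \ominus \mathcal B$ and $(\mathcal B \ovt \rL(\F_2)) \ominus \mathcal B$, of the Connes fusion tensor products over $\mathcal B$ of the corresponding $\rL^2$-spaces; two elementary tensors built from centered square-integrable letters are orthogonal unless their colour words coincide, in which case their inner product is evaluated leg by leg by an iterated application of $E_{\mathcal B}$. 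Here $E_{\mathcal B}$ is the trace-preserving conditional expectation onto $\mathcal B$, which on $\mathcal B \ovt \rL(\F_2)$ is $\id \ovt \tau_{\rL(\F_2)}$, so that $E_{\mathcal B}(u_g) = \delta_{g = e}$ for $g \in \F_2$.

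First I would put $W = u_{g_1} x_1 \cdots u_{g_m} x_m u_{g_{m+1}}$ into reduced form by simply deleting every factor $u_{g_j}$ with $g_j = e$. No further reduction is needed: the surviving $u_{g_j}$ ($g_j \neq e$) are centered letters of the third colour, each $x_k \in \mathcal M_{i_k} \ominus \mathcal B$ is a centered letter of colour $i_k$, and whenever two of the $x$'s become adjacent (i.e.\ $g_j = e$ for an interior index $j$) one has $i_{j-1} \neq i_j$ because $\mathcal I \in S_m$ is alternating, so they are letters of \emph{distinct} colours. Hence $W$ equals a genuine reduced word $\widehat W$ whose colour word records precisely $\mathcal I$ together with the set $\{j : g_j \neq e\}$; likewise $W' = \widehat{W'}$ with colour word recording $\mathcal J$ together with $\{j : h_j \neq e\}$.

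It then remains to conclude. By orthogonality of reduced words, $\langle W, W' \rangle_{\rL^2(\widetilde{\mathcal M}, \Tr)}$ vanishes unless the two colour words coincide, and coincidence forces $m = n$, $\mathcal I = \mathcal J$ and $\{j : g_j \neq e\} = \{j : h_j \neq e\}$ at once. In that case the iterated $E_{\mathcal B}$-evaluation contributes, for each surviving $u$-leg, the scalar factor $E_{\mathcal B}(u_{g_j}^* u_{h_j}) = \tau_{\rL(\F_2)}(u_{g_j^{-1} h_j}) = \delta_{g_j = h_j}$, which is $0$ unless $g_j = h_j$; combined with the equality of supports this gives $g_k = h_k$ for every $k \in \{1, \ldots, m+1\}$. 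Collapsing all these matched $u$-legs (each contributing the factor $1$) and using that $\rL^2(\mathcal M, \Tr) \subset \rL^2(\widetilde{\mathcal M}, \Tr)$ is isometrically embedded compatibly with the amalgamated-free-product structure of $\mathcal M = \mathcal M_1 \ast_{\mathcal B} \mathcal M_2$, the residual iterated-$E_{\mathcal B}$ expression is exactly $\langle x_1 \cdots x_m, y_1 \cdots y_n \rangle_{\rL^2(\mathcal M, \Tr)}$, which is the asserted value.

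The only steps that are not purely formal are the ``analytic'' underpinnings of the free-product Hilbert space formalism in this semifinite setting with possibly unbounded letters: that $x_1 \cdots x_m$ and $u_{g_1} x_1 \cdots u_{g_{m+1}}$ genuinely define $\rL^2$-vectors, and that the orthogonal decomposition and the iterated-$E_{\mathcal B}$ inner-product formula hold as stated. I would handle these by first approximating each $x_k, y_k$ in $\|\cdot\|_{2, \Tr}$ by bounded elements of $\mathcal M_{i_k} \ominus \mathcal B$ (Kaplansky density plus continuity of the relevant maps) and by invoking the construction of semifinite amalgamated free products recalled above; once everything is bounded, the argument is the textbook orthogonality-of-reduced-words computation, so beyond this bookkeeping I do not expect a real obstacle.
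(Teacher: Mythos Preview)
Your approach is correct and is essentially the only natural one: view $\widetilde{\mathcal M}$ as the three-fold amalgamated free product $\mathcal M_1 \ast_{\mathcal B} \mathcal M_2 \ast_{\mathcal B} (\mathcal B \ovt \rL(\F_2))$, reduce each word by deleting the trivial $u_e$'s, and invoke orthogonality of reduced words with distinct colour patterns. The paper does not write out a proof at all; it simply refers to \cite[Lemma~3.1]{Io12a}, whose argument is precisely this computation in the tracial case.

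Two minor remarks. First, your final paragraph about approximating the $x_k,y_k$ by bounded elements is unnecessary: by hypothesis $x_k \in \mathcal M_{i_k} \ominus \mathcal B$, so each letter is already a bounded operator, and the condition $\Tr(x_k^*x_k)<\infty$ only ensures square-integrability. The element $W = u_{g_1}x_1\cdots x_m u_{g_{m+1}}$ then lies in $\rL^2(\widetilde{\mathcal M},\Tr)$ because it has the form $A x_m B$ with $A,B$ bounded and $x_m$ square-integrable; the whole computation of $\Tr(W'^*W)$ via the freeness condition $E_{\mathcal B}(\text{reduced word})=0$ is therefore a manipulation of bounded operators throughout, with no limiting argument needed. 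Second, the step ``each surviving $u$-leg contributes the scalar $E_{\mathcal B}(u_{g_j}^*u_{h_j})$'' tacitly uses that the element of $\mathcal B$ carried over from the previous step of the iterated expectation commutes with the unitaries $u_g \in 1 \ot \rL(\F_2)$; this is exactly where the tensor structure of $\mathcal B \ovt \rL(\F_2)$ enters, and it is worth saying so explicitly.
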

\begin{proof}
The proof is the same as the proof of \cite[Lemma 3.1]{Io12a}. We leave it to the reader.
\end{proof}

Lemma \ref{orthogonalranges} allows us, in particular, to put $\mathcal H_n = \bigoplus_{\mathcal I \in S_n} \mathcal H_{\mathcal I}$ and $\mathcal K_n = \bigoplus_{\mathcal I \in S_n} \mathcal K_{\mathcal I}$ since the $\mathcal K_{\mathcal I}$'s are pairwise orthogonal. We then have
$$\rL^2(\mathcal M, \Tr) = \bigoplus_{n \in \N} \mathcal H_n \; \mbox{  and  } \; \rL^2(\widetilde{\mathcal M}, \Tr) = \bigoplus_{n \in \N} \mathcal K_n.$$

For all $\xi \in \rL^2(\mathcal M, \Tr)$, write $\xi = \sum_{n \in \N} \xi_n$ with $\xi_n \in \mathcal H_n$ for all $n \in \N$. A simple calculation shows that for all $t \in \R$, 
$$\Tr(\alpha_t(\xi) \xi^*) = \Tr(E_{\mathcal M}(\alpha_t(\xi)) \xi^*) = \sum_{n \in \N} \left( \frac{\sin(\pi t)}{\pi t} \right)^{2n} \|\xi_n\|_{2, \Tr}^2.$$
Observe that $t \mapsto \Tr(\alpha_t(\xi) \xi^*)$ is decreasing on $[0, 1]$ for all $\xi \in \rL^2(\mathcal M, \Tr)$.

\subsection{A semifinite analogue of Ioana-Peterson-Popa's intertwining theorem \cite{IPP05}}
The first result of this section is an analogue of the main technical result of \cite{IPP05} (see \cite[Theorem 4.3]{IPP05}) for semifinite amalgamated free product von Neumann algebras. A similar result also appeared in \cite[Theorem 4.2]{CH08}. For the sake of completeness, we will give the proof.

\begin{theo}\label{convergence-deformation}
Let $\mathcal M = \mathcal M_1 \ast_{\mathcal B} \mathcal M_2$ be a semifinite amalgamated free product von Neumann algebra with semifinite faithful normal trace $\Tr$. Let $p \in \Proj(\mathcal M)$ and $\mathcal A \subset p \mathcal M p$ any von Neumann subalgebra. Assume that there exist $c > 0$ and $t \in (0, 1)$ such that $\Tr(\alpha_t(w) w^*) \geq c$ for all $w \in \mathcal U(\mathcal A)$.

Then there exists $q \in \Proj(\mathcal B)$ such that $\mathcal A \preceq_{\mathcal M} q \mathcal B q$ or there exists $i \in \{1, 2\}$ and $q_i \in \Proj(\mathcal M_i)$ such that $\mathcal N_{p \mathcal M p}(\mathcal A)\dpr \preceq_{\mathcal M} q_i \mathcal M_i q_i$.
\end{theo}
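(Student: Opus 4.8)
The plan is to follow the strategy of \cite[Theorem 4.3]{IPP05} adapted to the semifinite setting, using the malleable deformation $(\alpha_t,\beta)$ on $\widetilde{\mathcal M} = \mathcal M \ast_{\mathcal B}(\mathcal B \ovt \rL(\F_2))$ built in Section \ref{section-malleable}. First I would observe that the hypothesis $\Tr(\alpha_t(w)w^*) \geq c$ for all $w \in \mathcal U(\mathcal A)$ and some fixed $t \in (0,1)$ means that the deformation does not converge uniformly to the identity on $\Ball(\mathcal A)$; by the spectral-decomposition formula for $\Tr(\alpha_s(\xi)\xi^*) = \sum_n (\sin(\pi s)/\pi s)^{2n}\|\xi_n\|_{2,\Tr}^2$ recorded at the end of Section \ref{section-malleable}, monotonicity in $s$ on $[0,1]$ lets me assume $t$ is as small as I like, in particular $t < 1/2$ so that $u_1^t, u_2^t$ are close to $1$. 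The standard transversality/malleability trick (using $\alpha_t\beta = \beta\alpha_{-t}$ and that $\alpha_{2t} = \alpha_t^2$ up to the $\beta$-flip) upgrades the lower bound $\Tr(\alpha_t(w)w^*)\geq c$ to a uniform lower bound $\Tr(\alpha_s(w)w^*) \geq c'$ for all $s$ in a neighborhood of $0$, with $c'>0$ independent of $s$ in that range; this is the semifinite analogue of Popa's transversality lemma and only uses that $\Tr$ is a trace on $\widetilde{\mathcal M}$.

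Next I would run the standard dichotomy argument. Consider the $\mathcal A$-$\mathcal A$-bimodule $\mathcal K = \overline{\alpha_t(p)\rL^2(\widetilde{\mathcal M},\Tr)p}$ with right action twisted by $\alpha_t$ — more precisely look at how $\mathcal A$ acts on the left and $\alpha_t(\mathcal A)$ acts on the right of suitable vectors. The uniform lower bound produces, for each $w \in \mathcal U(\mathcal A)$, a vector $\xi \in \rL^2(\widetilde{\mathcal M},\Tr)$ (a suitable average/limit of $\alpha_t(w)w^*$, or rather the fixed vector of the associated completely positive map) with $\|\xi\|_{2,\Tr}^2 \geq c'$, lying in $p\rL^2(\widetilde{\mathcal M},\Tr)p$, and satisfying $\alpha_t(w)\,\xi = \xi\,w$ (up to the transversality manipulation, after replacing $\alpha_t$ by $\alpha_{t/2}\beta\alpha_{t/2}^{-1}$-type conjugates so the intertwining is clean) for all $w\in\mathcal U(\mathcal A)$; taking weak limits gives a single nonzero $\xi$ with $\alpha_t(a)\xi = \xi a$ for all $a\in\mathcal A$. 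Writing $\eta = \alpha_{-t}(\xi) \in \rL^2(\widetilde{\mathcal M},\Tr)$ we get $a\eta = \eta\,\alpha_{-t}(a)$, i.e.\ $\eta$ generates a nonzero $\mathcal A$-$\alpha_{-t}(\mathcal A)$-subbimodule of $\rL^2(\widetilde{\mathcal M})$ of ``finite right dimension'' in the appropriate sense. Then I dichotomize according to whether $\eta$ stays in finitely many of the $\mathcal K_n$'s: if $\eta$ has a nonzero component in $\mathcal K_0 = \rL^2(\mathcal B \ovt \rL(\F_2),\Tr)$ after conjugating into one of the free ``corners'', this forces $\mathcal A \preceq_{\widetilde{\mathcal M}} \mathcal B \ovt \rL(\Z)$-type subalgebras, which by a malleability/conjugation argument (using that $\alpha_t$ is an automorphism and the $\mathcal B$-valued freeness) pushes down to $\mathcal A \preceq_{\mathcal M} q\mathcal B q$ for some $q \in \Proj(\mathcal B)$; otherwise $\eta$ is genuinely supported on words of bounded length, which together with Lemma \ref{orthogonalranges} and a bimodule-dimension computation (as in the proof of Theorem \ref{controlling-afp}, via \cite[Lemma D.3]{vaes-bourbaki-popa} and \cite[Remark 3.8]{vaes-bimodules}) confines $\eta$ to $\rL^2(q_i\mathcal M_i q_i)$-type pieces, yielding $\mathcal N_{p\mathcal M p}(\mathcal A)\dpr \preceq_{\mathcal M} q_i\mathcal M_i q_i$.

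For the normalizer statement in the second alternative I would use Proposition \ref{intertwining-afp-bis}: once I have produced $\mathcal A \preceq_{\mathcal M} p_i\mathcal M_i p_i$ for some $i$ and $p_i \in \Proj(\mathcal M_i)$, if $\mathcal A \npreceq_{\mathcal M} q\mathcal B q$ for all $q \in \Proj(\mathcal B)$ then that proposition immediately gives $\mathcal N_{p\mathcal M p}(\mathcal A)\dpr \preceq_{\mathcal M} p_i\mathcal M_i p_i$, which is exactly the conclusion; so the real work is only to prove the bare intertwining $\mathcal A \preceq_{\mathcal M} p_i\mathcal M_i p_i$ or $\mathcal A \preceq_{\mathcal M} q\mathcal B q$ from the deformation hypothesis, and then invoke Proposition \ref{intertwining-afp-bis} to promote it. A technical point throughout is that $\mathcal A$ need not be tracial and $p \in \Proj(\mathcal M)$ is merely a finite-trace projection, so all Hilbert-bimodule manipulations must be done in $\rL^2(\cdot,\Tr)$ with the semifinite trace and I must keep track of finiteness of traces of the intermediate vectors — this is where Lemma \ref{intertwining-general-HR} (equivalence of the intertwining criteria in the semifinite $\|\cdot\|_{2,\Tr}$ language) is used repeatedly to translate between $\preceq$ and the vanishing of $\|E_{\mathcal B}(x^*w_k y)\|_{2,\Tr}$.

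The main obstacle I expect is the transversality step and the careful extraction of the intertwining vector: in the semifinite, non-tracial-$\mathcal A$ setting one cannot just take $\lim_k$ of $\alpha_t(w_k)w_k^*$ in $\rL^2$ as in the II$_1$ case and conclude a clean intertwiner, because $p$ is only a finite projection and the relevant completely positive maps act between non-finite algebras; getting a genuine nonzero fixed vector with the exact intertwining relation $a\eta = \eta\alpha_{-t}(a)$ — rather than merely an approximate one — requires the transversality identity $\|\alpha_{2t}(x) - x\|_{2,\Tr} \leq 2\|\alpha_t(x) - \mathbb{E}_{\mathcal M}\alpha_t(x)\|_{2,\Tr}$-type estimate (semifinite version) plus a convexity/closed-convex-hull argument in $\rL^2(\widetilde{\mathcal M},\Tr)$, and checking that the resulting vector lands in a finite-trace corner so that Theorem \ref{controlling-afp} applies. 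Once that vector is in hand, the word-length dichotomy via Lemma \ref{orthogonalranges} is routine.
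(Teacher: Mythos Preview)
Your proposal has the right ingredients at the edges --- extracting a nonzero intertwiner from the uniform lower bound, and invoking Proposition \ref{intertwining-afp-bis} at the end --- but the core of the argument is organized incorrectly, and the ``dichotomy on the $\mathcal K_n$-support of $\eta$'' is not a viable route.

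First, the malleability step goes in the \emph{opposite} direction to what you describe. The point is not to push $t$ down to $0$ (the monotonicity of $s \mapsto \Tr(\alpha_s(\xi)\xi^*)$ on $[0,1]$ already gives you the bound for all $s \leq t$ for free, so no transversality is needed there). The point is to iterate \emph{upward} to $t = 1$. One first reduces to $t = 2^{-r}$ by monotonicity, obtains a nonzero partial isometry $v$ with $vx = \alpha_t(x)v$ for $x \in \mathcal A$, and then uses Popa's malleability trick $w = \alpha_t(v\beta(v^*))$ to pass from an $\alpha_t$-intertwiner to an $\alpha_{2t}$-intertwiner, iterating until one reaches $\alpha_1$. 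The reason $t = 1$ is the target is structural: $\alpha_1 = \Ad(u_1) \ast_{\mathcal B} \Ad(u_2)$, so $\alpha_1(\mathcal M)$ sits in $\widetilde{\mathcal M}$ in a position that is genuinely free from $\mathcal M$ over $\mathcal B$, and this is what makes the final contradiction work.

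Second, the argument is by \emph{contradiction}, and this is not optional. One assumes that \emph{both} alternatives of the conclusion fail; then by Proposition \ref{intertwining-afp-bis} (more precisely, by negating its conclusion and combining with Lemma \ref{intertwining-general-HR}) one obtains a single net $w_k \in \mathcal U(\mathcal A)$ with $\lim_k \|E_{\mathcal M_i}(x^* w_k y)\|_{2,\Tr} = 0$ for both $i$ and all $x, y \in p\mathcal M$. This net is needed twice: once via Theorem \ref{controlling-afp} to ensure $v^*v \in p\mathcal M p$ (so the malleability trick can be iterated), and once at the end. The endgame is to prove that $\lim_k \|E_{\alpha_1(\mathcal M)}(x^* w_k y)\|_{2,\Tr} = 0$ for all $x, y \in p\widetilde{\mathcal M}$; since $w_k$ satisfies $\alpha_1(w_k)v = v w_k$, this forces $\|vv^*\|_{2,\Tr} = \|E_{\alpha_1(\mathcal M)}(v w_k v^*)\|_{2,\Tr} \to 0$, contradicting $v \neq 0$. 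The computation of $E_{\alpha_1(\mathcal M)}$ uses exactly the freeness at $t = 1$: on a reduced word $a'^*(c^* w_k d)b'$ with $a', b'$ beginning in $\mathcal B \ovt \rL(\F_2) \ominus \mathcal B$, only the $\mathcal H_0 \oplus \mathcal H_1$ component of $c^* w_k d$ survives under $E_{\alpha_1(\mathcal M)}$, and that component is exactly $E_{\mathcal M_1} + E_{\mathcal M_2} - E_{\mathcal B}$ applied to $c^* w_k d$, which vanishes by the choice of $w_k$.

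Your proposed direct dichotomy --- ``if $\eta$ has a $\mathcal K_0$ component then $\mathcal A \preceq_{\mathcal M} q\mathcal B q$, otherwise $\eta$ is supported on bounded word length and one gets $\mathcal A \preceq_{\mathcal M} \mathcal M_i$'' --- does not hold up. An intertwiner $\eta$ for $\alpha_t$ with $t$ small and irrational will typically have components spread across all $\mathcal K_n$ with no finite support, and a nonzero $\mathcal K_0$ component of $\eta$ carries no direct information about $\mathcal A \preceq \mathcal B$ (the intertwining relation is twisted by $\alpha_t$, so the $\mathcal B$-component of $\eta$ does not intertwine $\mathcal A$ with $\mathcal B$). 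There is no word-length dichotomy available at generic $t$; the whole content of the argument is that at $t = 1$ the geometry degenerates to a clean free position.
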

 
\begin{proof}
By assumption, there exist $c > 0$ and $t \in (0, 1)$ such that $\Tr(\alpha_t(w) w^*) \geq c$ for all $w \in \mathcal U(\mathcal A)$. Choose $r \in \N$ large enough such that $2^{-r} \leq t$. Then $\Tr(\alpha_{2^{-r}}(w) w^*) \geq c$ for all $w \in \mathcal U(\mathcal A)$. So, we may assume that $t = 2^{-r}$. A standard functional analysis trick yields a nonzero partial isometry $v \in \alpha_t(p) \widetilde {\mathcal M} p$ such that $v x= \alpha_t(x) v$ for all $x \in \mathcal A$. Observe that $v^*v \in \mathcal A' \cap p  \widetilde{\mathcal M} p$ and $vv^* \in \alpha_t(\mathcal A' \cap  p \widetilde {\mathcal M} p )$.

We prove the result by contradiction. Using Proposition \ref{intertwining-afp-bis} and as in the proof of Proposition \ref{intertwining-semifinite}, we may choose a net of unitaries $w_k \in \mathcal U(\mathcal A)$ such that $\lim_k \|E_{\mathcal M_i}(x^* w_k y)\|_{2, \Tr} = 0$ for all $i \in \{1, 2\}$ and all $x, y \in p \mathcal M$. In particular, we get $\lim_k \|E_{\mathcal B}(x^* w_k y)\|_{2, \Tr} = 0$ for all $x, y \in p \mathcal M$. Regarding $\widetilde{\mathcal M} = \mathcal M \ast_{\mathcal B} (\mathcal B \ovt \rL(\F_2))$, we get $v^*v \in \mathcal A' \cap p \mathcal M p$ by Theorem \ref{controlling-afp}. We use now Popa's malleability trick \cite{popa-malleable1} and put $w = \alpha_t(v \beta(v^*)) \in \alpha_{2t}(p) \widetilde{\mathcal M} p$. Since $ww^* = \alpha_t(vv^*) \neq 0$, we get $w \neq 0$ and $w x =  \alpha_{2t}(x) w$ for all $x \in \mathcal A$. Iterating this construction, we find a nonzero partial isometry $v \in \alpha_1(p) \widetilde {\mathcal M} p$ such that 
\begin{equation}\label{malleability}
v x  =  \alpha_1(x) v, \; \forall x \in \mathcal A.
\end{equation}
Moreover, using again Proposition \ref{controlling-afp}, we get $v^*v \in \mathcal A' \cap p  \mathcal M p$ and $vv^* \in \alpha_1(\mathcal A' \cap p \mathcal M p)$.

Next, exactly as in the proof of \cite[Claim 4.3]{CH08}, we obtain the following.
\begin{claim}
We have $\lim_k \|E_{\alpha_1(\mathcal M)}(x^* w_k y)\|_{2, \Tr} = 0$ for all $x, y \in p \widetilde {\mathcal M}$.
\end{claim}

\begin{proof}[Proof of the Claim]
Regard $\widetilde {\mathcal M} = \mathcal M \ast_{\mathcal B} (\mathcal B \ovt \rL(\F_2))$. By Kaplansky density theorem, it suffices to prove the Claim for $x =  p a$ and $y = p b$ with $a, b$ in $\mathcal B$ or reduced words in $\widetilde {\mathcal M}$ with letters alternating from $\mathcal M \ominus \mathcal B$  and $\mathcal B \ovt \rL(\F_2) \ominus \mathcal B \ovt \C 1$. Write $a = c a'$ with $c = a$ if $a \in \mathcal B$; $c = 1$ if $a$ begins with a letter from $\mathcal B \ovt \rL(\F_2) \ominus \mathcal B \ovt \C 1$; $c$ equals the first letter of $a$ otherwise. Likewise, write $b = d b'$. Then we have $x^* w_k y = a^*  w_k b = a'^* \, c^* w_k d \, b'$ and note that $c^* w_k d \in \mathcal M$. Observe that $a'$ (resp.\ $b'$) equals $1$ or is a reduced word beginning with a letter from $\mathcal B \ovt \rL(\F_2) \ominus \mathcal B \ovt \C 1$.

Denote by $P$ the orthogonal projection from $\rL^2(\mathcal M, \Tr)$ onto $\mathcal H_0 \oplus \mathcal H_1$. Observe that since $c^* w_k d \in \mathcal M \cap \rL^2(\mathcal M, \Tr)$, we have
$$P(c^* w_k d) = E_{\mathcal M_1}(c^* w_k d) + E_{\mathcal M_2}(c^* w_k d) - E_{\mathcal B}(c^* w_k d).$$
Hence, $\lim_k \|P(c^* w_k d)\|_{2, \Tr} = 0$. Moreover, a simple calculation shows that 
$$E_{\alpha_1(\mathcal M)}(x^* w_k y) = E_{\alpha_1(\mathcal M)}(a'^* P(c^* w_k d) b').$$
Therefore, $\lim_k \|E_{\alpha_1(\mathcal M)}(x^* w_k y)\|_{2, \Tr} = 0$. This finishes the proof of the Claim.
\end{proof}

Finally, combining Equation $(\ref{malleability})$ together with the Claim, we get
$$\|vv^*\|_{2, \Tr} = \|\alpha_1(w_k) vv^*\|_{2, \Tr} = \|E_{\alpha_1(\mathcal M)}(\alpha_1(w_k) vv^*)\|_{2, \Tr} = \|E_{\alpha_1(\mathcal M)}(v w_k v^*)\|_{2, \Tr} \to 0.$$
This contradicts the fact that $v \neq 0$ and finishes the proof of Theorem \ref{convergence-deformation}.
\end{proof}

\subsection{A semifinite analogue of Ioana's intertwining theorem \cite{Io12a}}
Let $\mathcal M = \mathcal M_1 \ast_{\mathcal B} \mathcal M_2$ be a semifinite amalgamated free product von Neumann algebra with semifinite faithful normal trace $\Tr$. Put $\widetilde {\mathcal M} = \mathcal M \ast_{\mathcal B} (\mathcal B \ovt \rL(\F_2))$ and observe that $\widetilde{\mathcal M}$ is still a semifinite amalgamated free product von Neumann algebra. We still denote by $\Tr$ the semifinite faithful normal trace on $\widetilde{\mathcal M}$. Let $\mathcal N = \bigvee \{ u_g \mathcal M u_g^* : g \in \F_2\} \subset \widetilde{\mathcal M}$. Observe that $\mathcal N$ can be identified with an infinite amalgamated free product von Neumann algebra, that $\Tr | \mathcal N$ is semifinite and that, under this identification, the action $\F_2 \curvearrowright \mathcal N$ is given by the {\em free Bernoulli shift} which preserves the canonical trace $\Tr$. We moreover have $\widetilde{\mathcal M} = \mathcal N \rtimes \F_2$. 

We will denote by $E_{\mathcal N} : \widetilde{\mathcal M} \to \mathcal N$ the unique trace preserving faithful normal conditional expectation as well as the orthogonal projection  $E_{\mathcal N} : \rL^2(\widetilde{\mathcal M}, \Tr) \to \rL^2(\mathcal N, \Tr)$.

We prove next the analogue of \cite[Theorem 3.2]{Io12a} for semifinite amalgamated free product von Neumann algebras.

\begin{theo}\label{dichotomy-intertwining}
Let $\mathcal M = \mathcal M_1 \ast_{\mathcal B} \mathcal M_2$ be a semifinite amalgamated free product von Neumann algebra with semifinite faithful normal trace $\Tr$. Let $p \in \Proj(\mathcal B)$, $\mathcal A \subset p\mathcal M p$ any von Neumann subalgebra and $t \in (0, 1)$. Assume that there is no net of unitaries $w_k \in \mathcal U(\mathcal A)$ such that 
\[ \lim_k \Vert E_{\mathcal N}(x^*\alpha_t(w_k)y)\Vert_{2,\Tr} = 0, \, \forall x, y \in p\widetilde{\mathcal M}.\]

Then there exists $q \in \Proj(\mathcal B)$ such that $\mathcal A \preceq_{\mathcal M} q \mathcal B q$ or there exists $i \in \{1, 2\}$ and $q_i \in \Proj(\mathcal M_i)$ such that $\mathcal N_{p \mathcal M p}(\mathcal A)\dpr \preceq_{\mathcal M} q_i \mathcal M_i q_i$.
\end{theo}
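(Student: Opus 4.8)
The plan is to run the malleability argument from the proof of Theorem \ref{convergence-deformation} but with the ``target'' algebra being $\mathcal N$ rather than $\mathcal M$, and then to feed the output into Theorem \ref{convergence-deformation} itself. So assume, by contradiction, that $\mathcal A \npreceq_{\mathcal M} q\mathcal B q$ for all $q \in \Proj(\mathcal B)$ and that $\mathcal N_{p\mathcal M p}(\mathcal A)\dpr \npreceq_{\mathcal M} q_i \mathcal M_i q_i$ for $i \in \{1,2\}$ and all $q_i \in \Proj(\mathcal M_i)$. First I would record, exactly as in Theorem \ref{convergence-deformation}, that this negation together with Proposition \ref{intertwining-afp-bis} (and the Lemma \ref{intertwining-general-HR}/Proposition \ref{intertwining-semifinite} packaging) produces a net $w_k \in \mathcal U(\mathcal A)$ with $\lim_k \|E_{\mathcal M_i}(x^*w_k y)\|_{2,\Tr} = 0$ for all $i$ and all $x,y \in p\mathcal M$.

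Next I would show that this net is actually ``mixing relative to $\mathcal N$ after applying $\alpha_t$'', i.e.\ $\lim_k \|E_{\mathcal N}(x^*\alpha_t(w_k)y)\|_{2,\Tr} = 0$ for all $x,y \in p\widetilde{\mathcal M}$, which directly contradicts the hypothesis. This is the computational heart of the argument and mirrors \cite[Theorem 3.2]{Io12a}. The key point is the orthogonality decomposition $\rL^2(\widetilde{\mathcal M},\Tr) = \bigoplus_n \mathcal K_n$ from Notation \ref{notation2} and Lemma \ref{orthogonalranges}: the range of $E_{\mathcal N}$ is the closed span of the $\mathcal K_{\mathcal I}$ with all Bernoulli-translates allowed, and conjugating a word in $\mathcal M$ by $\alpha_t = \Ad(u_1^t)\ast_{\mathcal B}\Ad(u_2^t)$ spreads its syllables over translates $u_1^t x u_1^{-t}$ etc. Using Kaplansky density I would reduce to $x,y$ being reduced words in $\widetilde{\mathcal M}$ (alternating between $\mathcal M\ominus\mathcal B$ and $\mathcal B\ovt\rL(\F_2)\ominus\mathcal B$), peel off the outer $\rL(\F_2)$-letters, and be left with estimating $\|E_{\mathcal N}(a^* \alpha_t(w_k) b)\|_{2,\Tr}$ for $a,b \in \mathcal M$; expanding $\alpha_t(w_k)$ and using $\lim_k\|E_{\mathcal M_i}(\cdot\, w_k\,\cdot)\|_{2,\Tr}=0$ together with Lemma \ref{orthogonalranges} to kill every surviving term. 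The main obstacle is precisely this bookkeeping: one has to check that after applying $\alpha_t$, every component of $a^*\alpha_t(w_k)b$ lying in $\rL^2(\mathcal N,\Tr)$ has a factor of the form $E_{\mathcal B}(c^* w_k d)$ or $E_{\mathcal M_i}(c^* w_k d)$ appearing, so that the mixing hypothesis on $(w_k)$ forces the norm to $0$; the nontrivial case is when $w_k$ has large ``word length'' and its middle syllables, after conjugation by $u_i^t$, become orthogonal to everything in sight.

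Finally, having derived the contradiction, I note that the statement is proved; in fact the cleanest presentation is contrapositive: if neither $\mathcal A \preceq_{\mathcal M} q\mathcal B q$ nor $\mathcal N_{p\mathcal M p}(\mathcal A)\dpr \preceq_{\mathcal M} q_i\mathcal M_i q_i$ holds, then the displayed mixing property of $(\alpha_t(w_k))$ relative to $\mathcal N$ does hold. I would remark that the hypothesis on $t\in(0,1)$ is used only through the fact that $\tau(u_i^t) = \sin(\pi t)/(\pi t) \neq 0$, which is what makes the $\alpha_t$-conjugated syllables genuinely ``move'' while still overlapping the original ones; and that, as in \cite{Io12a}, combining this theorem with Theorem \ref{convergence-deformation} is how one will later obtain the dichotomy (intertwining vs.\ relative amenability) that drives the applications.
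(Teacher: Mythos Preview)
Your proposed argument has a genuine gap at its ``computational heart.'' You claim that after applying $\alpha_t$ and projecting to $\rL^2(\mathcal N,\Tr)$, every surviving term in $a^*\alpha_t(w_k)b$ contains a factor of the form $E_{\mathcal B}(c^* w_k d)$ or $E_{\mathcal M_i}(c^* w_k d)$, so that the mixing hypothesis on $(w_k)$ kills it. That is not how the structure works. For $t\in(0,1)$ the unitary $u_i^t$ is \emph{not} a group element; expanding $u_i^t=\sum_{g\in G_i}\beta_i(g)u_g$ shows that for $x=x_1\cdots x_n\in\mathcal H_{\mathcal I}$ one has
\[
E_{\mathcal N}(u_g\alpha_t(x)u_h)=\sum_{\substack{g_1,h_1,\dots,g_n,h_n\\ g\,g_1 h_1^{-1}\cdots g_n h_n^{-1} h=1}}\beta_{i_1}(g_1)\beta_{i_1}(h_1)\cdots\beta_{i_n}(g_n)\beta_{i_n}(h_n)\,u_{g_1}x_1u_{h_1}^*\cdots u_{g_n}x_nu_{h_n}^*,
\]
so the $\rL^2$-norm squared is $\|x\|_{2,\Tr}^2$ times a convolution weight $(\nu_{i_1}\ast\cdots\ast\nu_{i_n})(g^{-1}h^{-1})$ of probability measures on $\F_2$. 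No factor of $E_{\mathcal M_i}(c^* w_k d)$ appears, and the mixing relative to $\mathcal M_i$ gives you no control whatsoever over the high-word-length components of $w_k$, whose total $\|\cdot\|_{2,\Tr}$-mass is bounded away from zero since $w_k$ is unitary.

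The paper handles this in a completely different way. First, the contrapositive of Theorem~\ref{convergence-deformation} yields a net $w_k\in\mathcal U(\mathcal A)$ with $\Tr(\alpha_t(w_k)w_k^*)\to 0$; since $\Tr(\alpha_t(w_k)w_k^*)=\sum_n(\sin(\pi t)/\pi t)^{2n}\|w_{k,n}\|_{2,\Tr}^2$, this forces $\|w_{k,n}\|_{2,\Tr}\to 0$ for each \emph{fixed} $n$. Second, and this is the missing idea, the tail is controlled by the uniform bound $c_n=\sup_{x\in\mathcal H_n,\,\|x\|\le 1}\|E_{\mathcal N}(u_g\alpha_t(x)u_h)\|_{2,\Tr}\to 0$ (Lemma~\ref{cn}), which is proved via the random walk identity above and the fact that iterated convolutions of a nondegenerate probability measure on the nonamenable group $\F_2$ decay pointwise. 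Your remark that $t\in(0,1)$ is used ``only through $\tau(u_i^t)\neq 0$'' understates what is needed: one uses the full Fourier expansion of $u_i^t$ to produce a probability measure whose support generates $\F_2$, so that Kesten-type decay applies. Without this random walk estimate the argument does not close.
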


The main technical lemma that will be used to prove Theorem \ref{dichotomy-intertwining} is a straightforward generalization of \cite[Lemma 3.4]{Io12a}. We include a proof for the sake of completeness.

\begin{lem}\label{cn}
Let $t \in (0,1)$ and $g,h \in \F_2$. For all $n \geq 0$, define 
$$c_n = \sup_{x \in \mathcal H_n, \, \| x \|_{2,\Tr} \leq 1} \| E_\cN(u_g\alpha_t(x)u_h) \|_{2,\Tr}.$$
Then  $\lim_n c_n = 0$.
\end{lem}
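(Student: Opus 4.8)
The plan is to reduce the statement to the decay of a hitting probability for a random walk on $\F_2$, in the spirit of \cite[Lemma 3.4]{Io12a}. Using the identification $\widetilde{\mathcal M} = \mathcal N \rtimes \F_2$, for $y \in \widetilde{\mathcal M}$ written in Fourier form $y = \sum_{w \in \F_2} y_w u_w$ with $y_w \in \mathcal N$ one has $E_{\mathcal N}(u_g y u_h) = u_g\, y_{g^{-1}h^{-1}}\, u_g^*$, so that $\|E_{\mathcal N}(u_g \alpha_t(x) u_h)\|_{2,\Tr} = \|(\alpha_t(x))_{w_0}\|_{2,\Tr}$ where $w_0 := g^{-1}h^{-1}$ is fixed. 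Hence $c_n$ is the norm of the bounded operator $x \mapsto E_{\mathcal N}(\alpha_t(x) u_{w_0}^*)$ restricted to $\mathcal H_n$. Since $\mathcal H_n = \bigoplus_{\mathcal I \in S_n}\mathcal H_{\mathcal I}$, since $\alpha_t(\mathcal H_{\mathcal I}) \subset \mathcal K_{\mathcal I}$, and since the $\mathcal K_{\mathcal I}$ are pairwise orthogonal by Lemma \ref{orthogonalranges}, it suffices to bound this operator on each $\mathcal H_{\mathcal I}$ with $\mathcal I = (i_1, \dots, i_n) \in S_n$ fixed.

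Next I would expand each $u_{i_j}^t \in \rL(\langle u_{i_j}\rangle)$ into its Fourier series $u_{i_j}^t = \sum_{p \in \Z}\lambda_p u_{i_j}^p$, where $\lambda_p = \sin(\pi(t-p))/(\pi(t-p))$, so that $\lambda_p \neq 0$ for every $p$ and $\sum_p |\lambda_p|^2 = 1$. For a reduced word $x = x_1 \cdots x_n$ with $x_j \in \mathcal M_{i_j} \ominus \mathcal B$ this yields
$$\alpha_t(x) = \sum_{(\vec p,\vec q) \in \Z^n \times \Z^n}\Bigl(\prod_{j=1}^n \lambda_{p_j}\overline{\lambda_{q_j}}\Bigr)\, u_{i_1}^{p_1} x_1 u_{i_1}^{-q_1}\, u_{i_2}^{p_2} x_2 u_{i_2}^{-q_2} \cdots u_{i_n}^{p_n} x_n u_{i_n}^{-q_n},$$
and after regrouping the $\F_2$-unitaries sitting between consecutive letters each summand lies in $\mathcal K_{\mathcal I}$, with $\F_2$-degree (in $\mathcal N \rtimes \F_2$) equal to $u_{i_1}^{p_1-q_1} u_{i_2}^{p_2-q_2}\cdots u_{i_n}^{p_n-q_n}$ by a telescoping computation. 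By Lemma \ref{orthogonalranges}, distinct $(\vec p,\vec q)$ give orthogonal summands of equal $\|\cdot\|_{2,\Tr}$-norm, and this orthogonality persists after applying $E_{\mathcal N}(\,\cdot\, u_{w_0}^*)$; extending $\mathcal B$-linearly to arbitrary $x \in \mathcal H_{\mathcal I}$, the restricted operator becomes a sum of scalars times isometries with pairwise orthogonal ranges, and therefore
$$c_n^2 = \max_{(i_1,\dots,i_n) \in S_n}\ \sum_{\substack{(\vec p,\vec q) \in \Z^n \times \Z^n \\ u_{i_1}^{p_1-q_1}\cdots u_{i_n}^{p_n-q_n} = w_0 \ \text{in}\ \F_2}}\ \prod_{j=1}^n |\lambda_{p_j}|^2 |\lambda_{q_j}|^2.$$

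Finally, letting $\nu$ be the probability measure on $\Z$ defined by $\nu(\{m\}) = \sum_{p \in \Z}|\lambda_p|^2 |\lambda_{p-m}|^2$, one has $\nu(\{0\}) = \sum_p |\lambda_p|^4 \in (0,1)$ and $\nu(\{\pm 1\}) \geq |\lambda_0|^2 |\lambda_1|^2 > 0$, and the inner sum above equals $\Prob(u_{i_1}^{m_1} u_{i_2}^{m_2}\cdots u_{i_n}^{m_n} = w_0)$ for $m_1,\dots,m_n$ i.i.d.\ with law $\nu$. As $(i_1,\dots,i_n)$ alternates, this is (after pairing consecutive steps) the probability that a genuine i.i.d.\ random walk on $\F_2$ — with step law having at least two atoms and support generating $\F_2$ — equals the fixed element $w_0$ at time $n$; since $\F_2$ is non-amenable, such probabilities tend to $0$ as $n \to \infty$, and taking the maximum over the two alternating patterns of each length gives $\lim_n c_n = 0$. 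The step I expect to be the main obstacle is the middle one: reconciling the amalgamated-free-product and the crossed-product descriptions of $\widetilde{\mathcal M}$, verifying the telescoping formula for the $\F_2$-degree, and organizing the estimate so that $c_n^2$ becomes exactly the sum above — in particular handling the degenerate indices where some $p_j$ or $q_j$ vanishes, so that an $\F_2$-unitary disappears and two $\mathcal M$-letters become adjacent. The random-walk input at the end is classical, and only a weak form of it is needed.
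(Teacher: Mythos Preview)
Your proposal is correct and follows essentially the same route as the paper. Both arguments expand $\alpha_t(x)$ for a reduced word $x = x_1\cdots x_n$ via the Fourier series $u_{i_j}^t = \sum_p \lambda_p u_{i_j}^p$, use Lemma \ref{orthogonalranges} to see that the resulting summands are isometric images with pairwise orthogonal ranges, observe that $E_{\mathcal N}$ simply selects those summands whose total $\F_2$-degree equals $g^{-1}h^{-1}$, and arrive at exactly the formula $c_{\mathcal I}^2 = \sum \prod_j |\lambda_{p_j}|^2|\lambda_{q_j}|^2$ over the constraint $u_{i_1}^{p_1-q_1}\cdots u_{i_n}^{p_n-q_n} = g^{-1}h^{-1}$; this is the paper's formula $(\ref{c_I})$ written in your coordinates.

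Two remarks. First, the ``main obstacle'' you anticipate --- the degenerate indices where some $p_j$ or $q_j$ vanishes --- is a non-issue: Lemma \ref{orthogonalranges} is stated for \emph{arbitrary} $g_1,\dots,g_{n+1} \in \F_2$, including the identity, so the isometry/orthogonality statement needs no separate treatment of those cases, and reconciling the crossed-product and AFP pictures is immediate from equation $(\ref{esp})$. Second, your closing sentence ``since $\F_2$ is non-amenable, such probabilities tend to $0$'' hides the only nontrivial input left: the paired step law $\mu = \nu_1 * \nu_2$ is not symmetric, so Kesten's theorem does not apply directly, and one needs exactly the statement the paper cites as \cite[Lemma 2.13]{Io12a} (decay of $\mu^{*k}(s)$ when the support of $\mu$ generates $\F_2$ and $\mu(e)>0$). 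You have checked the hypotheses of that lemma, so this is only a matter of citing it rather than invoking non-amenability informally.
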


\begin{proof}
First, observe that for all $g_1 \ldots,g_{n+1} \in \F_2$ and all $x_1,\ldots, x_n \in \cM$, we have
\begin{equation}\label{esp} 
E_{\cN}(u_{g_1}x_1\cdots u_{g_n}x_nu_{g_{n+1}}) = \begin{cases} u_{g_1}x_1\cdots u_{g_n}x_nu_{g_{n+1}} \mbox{ if } g_1 \cdots g_{n+1} = 1; \\ 
 0 \mbox{ otherwise}.\end{cases}
 \end{equation}
Thus for all $\mathcal I \in S_n$, we have $E_\cN(\cK_\cI) \subset \cK_\cI$ and since $\alpha_t(\mathcal H_{\mathcal I}) \subset \mathcal K_{\mathcal I}$, we get that $E_\cN(u_g\alpha_t(x)u_h) \in \mathcal K_{\mathcal I}$ for all $x \in \mathcal H_{\mathcal I}$. So defining 
$$c_{\mathcal I}  = \sup_{x \in \mathcal H_{\mathcal I}, \, \Vert x \Vert_{2,\Tr} \leq 1} \Vert E_\cN(u_g\alpha_t(x)u_h) \Vert_{2,\Tr},$$ 
we see that $c_n = \max_{\mathcal I \in S_n} c_{\mathcal I}$ since the subspaces $\mathcal K_{\mathcal I}$'s are pairwise orthogonal.

Let us fix $\mathcal I = (i_1,\ldots,i_n) \in S_n$ and calculate $c_{\mathcal I}$. Denote by $a$ and $b$ the canonical generators of $\F_2$ so that $u_1 = u_a$, $u_2 = u_b$ and put $G_1 = \langle a \rangle$ and $G_2= \langle b \rangle$. For $g_1,h_1 \in G_{i_1},\ldots, g_n,h_n \in G_{i_n}$, define a map
$$V_{g_1,h_1, \dots, g_n,h_n}(x_1\cdots x_n) = u_{g_1}x_1u_{h_1}^* \cdots u_{g_n} x_n u_{h_n}^*$$
for all $x_j \in \cM_{i_j} \ominus \cB$ such that $\Tr(x_j^*x_j) < \infty$ for all $j \in \lbrace 1, \dots, n\rbrace$. By Lemma \ref{orthogonalranges}, these maps $V_{g_1,h_1,\ldots,g_n,h_n}$ extend to isometries $V_{g_1,h_1,\ldots,g_n,h_n} : \mathcal H_{\mathcal I} \rightarrow \mathcal K_{\mathcal I}$ with pairwise orthogonal ranges when $(g_1, h_1, \dots, g_n, h_n)$ are pairwise distinct. Indeed, we have $V_{g_1,h_1, \dots, g_n,h_n}(\mathcal H_{\mathcal I}) \perp V_{g_1',h_1', \dots, g_n',h_n'}(\mathcal H_{\mathcal I})$ unless $g_1 = g_1', h_1^{-1} g_2 = h_1'^{-1} g_2', \dots, h_{n - 1}^{-1}g_n = h_{n - 1}'^{-1} g_n', h_n^{-1} = h_n'^{-1}$. Since moreover $G_1 \cap G_2 = \{e\}$, this further implies that $g_j = g_j'$ and $h_j = h_j'$ for all $j \in \{1, \dots, n\}$.

Denote the Fourier coefficients of $u_1^t$ and $u_2^t$ respectively by $\beta_1(g_1)=\tau(u_1^tu_{g_1}^*)$ for  $g_1 \in G_1$ and $\beta_2(g_2)=\tau(u_2^tu_{g_2}^*)$ for $g_2 \in G_2$. We have an explicit formula for these coefficients given by
$$\beta_i(u_i^n) = \tau(u_i^t u_i^{-n}) = \tau(u_i^{t - n}) = \frac{\sin(\pi (t - n))}{\pi(t - n)}.$$
It follows in particular that $\beta_i(g_i) \in \R$ for all $i \in \{1, 2\}$ and all $g_i \in G_i$.
 Since $u_1^t$ and $u_2^t$ are unitaries, we moreover have  \[\sum_{g_1\in G_1}\beta_1(g_1)^2=\sum_{g_2\in G_2}\beta_2(g_2)^2=1.\]

If $x =x_1\cdots x_n$ with $x_j\in \cM_{i_j}\ominus \cB$ satisfying $\Tr(x_j^*x_j) < \infty$, we have
\begin{align*}
u_g\alpha_t(x)u_h & = u_g \, u_{i_1}^tx_1u_{i_1}^{t*}\cdots u_{i_n}^tx_nu_{i_n}^{t*} \,u_h\\
& = \sum_{g_1,h_1\in G_{i_1},\ldots,g_n, h_n\in G_{i_n}}\beta_{i_1}(g_1)\beta_{i_1}(h_1)\cdots \beta_{i_n}(g_n)\beta_{i_n}(h_n)\;u_g \, u_{g_1}x_1u_{h_1}^*\cdots u_{g_n}x_n u_{h_n}^* \, u_h\\
& = \sum_{g_1,h_1\in G_{i_1},\ldots,g_n, h_n\in G_{i_n}}\beta_{i_1}(g_1)\beta_{i_1}(h_1)\cdots \beta_{i_n}(g_n)\beta_{i_n}(h_n)\;u_gV_{g_1,h_1,\ldots,g_n,h_n}(x)u_h,
\end{align*}
where the sum converges in $\Vert \cdot \Vert_{2,\Tr}$. Thus, for all $x \in \mathcal H_{\mathcal I}$, we get
$$u_g\alpha_t(x)u_h = \sum_{g_1,h_1\in G_{i_1},\ldots,g_n, h_n\in G_{i_n}}\beta_{i_1}(g_1)\beta_{i_1}(h_1)\cdots \beta_{i_n}(g_n)\beta_{i_n}(h_n)\;u_gV_{g_1,h_1,\ldots,g_n,h_n}(x)u_h.$$

Now, using the calculation $(\ref{esp})$, and the fact that the isometries $V_{g_1,h_1, \dots ,g_n,h_n}$ have mutually orthogonal ranges, we get that for all $x \in \mathcal H_{\mathcal I}$,
$$\Vert E_\cN(u_g\alpha_t(x)u_h) \Vert_{2,\Tr}^2 = \Vert x \Vert_{2,\Tr}^2 \sum_{\substack{g_1,h_1\in G_{i_1},\ldots ,g_n, h_n\in G_{i_n}\\ gg_1h_1^{-1} \cdots g_n h_n^{-1}h = 1}} \beta_{i_1}(g_1)^2\beta_{i_1}(h_1)^2 \cdots \beta_{i_n}(g_n)^2 \beta_{i_n}(h_n)^2.$$
Thus we get an explicit formula for $c_{\mathcal I}$ given by
\begin{equation}\label{c_I}
c_{\mathcal I} =\sum_{\substack{g_1,h_1\in G_{i_1},\ldots ,g_n,h_n\in G_{i_n}\\ gg_1h_1\cdots g_nh_nh = 1}} \beta_{i_1}(g_1)^2\beta_{i_1}(h_1^{-1})^2 \ldots \beta_{i_n}(g_n)^2\beta_{i_n}(h_n^{-1})^2.
\end{equation}

For $i \in \{1,2\}$, define $\mu_i \in \Prob(\F_2)$ by $\mu_i(g) = \beta_i(g)^2$ if $g \in G_i$ and $\mu_i(g) = 0$ otherwise. Likewise, define $\check \mu_i \in \Prob(\F_2)$ by $\check \mu_i(g) = \mu_i(g^{-1})$ for all $g \in \F_2$. Put $\nu_i = \mu_i \ast \check \mu_i$.
Then we have 
$$c_{\mathcal I} = (\nu_{i_1}  \ast \cdots \ast \nu_{i_n})(g^{-1} h^{-1}).$$
So if we put $\mu = \nu_1 \ast  \nu_2$, we have that 
$$c_{\mathcal I} \in \left\lbrace \mu^{\ast[\frac{n}{2}]}(g^{-1} h^{-1}),\mu^{\ast[\frac{n}{2}]} \ast \nu_1(g^{-1}h^{-1}),\nu_2 \ast \mu^{\ast[\frac{n}{2}]}(g^{-1} h^{-1}),\nu_2  \ast \mu^{\ast[\frac{n-1}{2}]}\ast\nu_1(g^{-1} h^{-1}) \right\rbrace.$$
Then \cite[Lemma 2.13]{Io12a} implies that
$\lim_k \mu^{\ast k}(s) = 0$ for all $s \in \F_2$ and so $\lim_n c_n = 0$.
\end{proof}

\begin{proof}[Proof of Theorem \ref{dichotomy-intertwining}]
Assume by contradiction that the conclusion of the theorem does not hold. Then Theorem \ref{convergence-deformation} implies that for $t \in (0, 1)$ there exists a net $w_k \in \mathcal U(\cA)$ such that 
$$\lim_k \Tr(\alpha_t(w_k)w_k^*) = 0.$$
We will show that for all $x,y \in p\widetilde{\mathcal M}$, we have $\lim_k \Vert E_{\mathcal N}(x^*\alpha_t(w_k)y)\Vert_{2,\Tr} = 0$, which will contradict the assumption of Theorem \ref{dichotomy-intertwining}. By a linearity/density argument, it is sufficient to show that for all $g, h \in \F_2$, 
\begin{equation} \label{absurd}
\lim_k \Vert E_{\mathcal N}(u_g\alpha_t(w_k)u_h)\Vert_{2,\Tr} = 0.
\end{equation}

For all $k$, we have $w_k \in \cA \subset \rL^2(\cM, \Tr) = \bigoplus_{n \in \N} \mathcal H_n$ so that we can write $w_k = \sum_{n \in \N} w_{k,n}$, with $w_{k,n} \in \mathcal H_n$. Recall that $\Tr(\alpha_t(w_k)w_k^\ast) = \sum_{n \in \N} \left(\frac{\sin(\pi t)}{\pi t} \right)^{2n}\Vert w_{k,n} \Vert_{2,\Tr}^2$.
Thus the fact that $\lim_k \Tr(\alpha_t(w_k)w_k^*) = 0$ implies that for all $n \geq 0$, $\lim_k \Vert w_{k,n} \Vert_{2,\Tr} = 0$.

Fix $g,h \in \F_2$ and $\varepsilon > 0$. Note that for $n \geq 1$, $E_{\mathcal N}(u_g\alpha_t(w_{k,n})u_h) \in \mathcal K_n$, so that all these terms are pairwise orthogonal. They are also all orthogonal to $E_{\mathcal N}(u_g\alpha_t(w_{k,0})u_h)$, which belongs to $\mathcal K_0$. Thus
\begin{align*}
\Vert E_{\mathcal N}(u_g\alpha_t(w_k)u_h)\Vert_{2,\Tr}^2 & = \sum_{n \geq 0} \Vert E_{\mathcal N}(u_g\alpha_t(w_{k,n})u_h)\Vert_{2,\Tr}^2\\
& \leq \sum_{n \geq 0} c_n^2\Vert w_{k,n} \Vert_{2,\Tr}^2
\end{align*}
where $c_n$ is defined in Lemma \ref{cn}. Observe that $c_n \leq 1$ for all $n \in \N$.

Lemma \ref{cn} implies that there exists $n_0 \geq 0$ such that for all $n > n_0$, $c_n^2 < \varepsilon/2$. Then we can find $k_0$ such that for all $k \geq k_0$, and all $n \leq n_0$, $\Vert w_{k,n} \Vert_{2,\Tr}^2 < \varepsilon/2(n_0 +1)$. So we get that for all $k \geq k_0$,
$$\Vert E_{\mathcal N}(u_g\alpha_t(w_k)u_h)\Vert_{2,\Tr}^2 \leq \sum_{n = 0}^{n_0} \Vert w_{k,n} \Vert_{2,\Tr}^2 + \frac{\varepsilon}{2} \sum_{n \geq n_0} \Vert w_{k,n} \Vert_{2,\Tr}^2 \leq \sum_{n = 0}^{n_0} \Vert w_{k,n} \Vert_{2,\Tr}^2 + \frac{\varepsilon}{2} \Vert w_{k} \Vert_{2,\Tr}^2 \leq \varepsilon.$$
This shows $(\ref{absurd})$ and finishes the proof of Theorem \ref{dichotomy-intertwining}.
\end{proof}

\section{Relative amenability inside semifinite AFP von Neumann algebras}

Let $\mathcal M = \mathcal M_1 \ast_{\mathcal B} \mathcal M_2$ be a semifinite amalgamated free product von Neumann algebra with semifinite faithful normal trace $\Tr$. Recall that $\widetilde{\mathcal M} = \mathcal M \ast_{\mathcal B} (\mathcal B \ovt \rL(\F_2))$, $\mathcal N = \bigvee \{ u_g \mathcal M u_g^* : g \in \F_2\} \subset \widetilde{\mathcal M}$ and observe that $\widetilde{\mathcal M} = \mathcal N \rtimes \F_2$. We denote by $\alpha : \R \to \Aut(\widetilde{\mathcal M})$ the malleable deformation from Section \ref{section-malleable}.

The main result of this section is the following strengthening of Ioana's result \cite[Theorem 4.1]{Io12a} in the framework of semifinite amalgamated free product von Neumann algebras over an {\em amenable} subalgebra.

\begin{theo}\label{dichotomy-amenability}
Let $\mathcal M = \mathcal M_1 \ast_{\mathcal B} \mathcal M_2$ be a semifinite amalgamated free product von Neumann algebra with semifinite faithful normal trace $\Tr_{\mathcal M}$. Assume that $\mathcal B$ is amenable. Let $q \in \Proj(\mathcal B)$ such that $q \mathcal M_1 q \neq q \mathcal B q \neq q \mathcal M_2 q$ and $t \in (0, 1)$ such that $\alpha_t(q \mathcal M q)$ is amenable relative to $q \mathcal N q$ inside $q \widetilde{\mathcal M} q$.

Then for all $i \in \{ 1, 2 \}$, there exists a nonzero projection $z_i \in \mathcal Z(\mathcal M_i)$ such that $\mathcal M_i z_i$ is amenable.
\end{theo}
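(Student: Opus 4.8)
The plan is to run the standard "deformation produces relative amenability at the level of the core, then spectral gap / convexity argument transfers it to one of the two sides" strategy of \cite[Section 4]{Io12a}, but carefully adapted to the semifinite situation and exploiting that $\mathcal B$ is amenable. The hypothesis is that $\alpha_t(q\mathcal Mq)$ is amenable relative to $q\mathcal Nq$ inside $q\widetilde{\mathcal M}q$ for some $t\in(0,1)$; since $\widetilde{\mathcal M}=\mathcal N\rtimes\F_2$ and $\mathcal N$ is an (infinite) amalgamated free product of copies of $\mathcal M$ over $\mathcal B$ with $\mathcal B$ amenable, one first wants to ``unwind'' the crossed product. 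More precisely I would first observe that amenability of $\alpha_t(q\mathcal Mq)$ relative to $q\mathcal Nq$ inside $q\widetilde{\mathcal M}q$, together with amenability of $\F_2$ acting nicely — no, $\F_2$ is \emph{not} amenable — forces us instead to work directly inside $\widetilde{\mathcal M}$ viewed as the AFP $\widetilde{\mathcal M}=\widetilde{\mathcal M}_1\ast_{\mathcal B}\widetilde{\mathcal M}_2$ with $\widetilde{\mathcal M}_i=\mathcal M_i\ast_{\mathcal B}(\mathcal B\ovt\rL(\Z))$. The point of passing through $\mathcal N$ is that relative amenability to $q\mathcal Nq$ is exactly what one gets for free from a malleable deformation (this is the content of the argument behind \cite[Theorem 4.1]{Io12a}, and the analogue of Theorem \ref{dichotomy-intertwining} in the ``amenable'' regime). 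So Step 1: use the given relative amenability and the deformation $\alpha$ (Popa's malleability trick, exactly as in the proof of Theorem \ref{convergence-deformation}) to upgrade to: $q\mathcal Mq$ is amenable relative to $q\mathcal M_1q$ or to $q\mathcal M_2q$ \emph{inside} $q\mathcal Mq$ — or rather, the bimodule-theoretic conclusion that $\rL^2(q\mathcal Mq)$ is weakly contained, as a $q\mathcal Mq$-$q\mathcal Mq$ bimodule, in a bimodule built from $\rL^2(q\mathcal M_iq)$'s amplified over the amenable algebra $\mathcal B$, using the key fact $\mathcal H\otimes_{\mathcal B}\mathcal K\subset_{\weak}\mathcal H\otimes\mathcal K$ when $\mathcal B$ is amenable.

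Step 2: translate this into a statement about $\mathcal M_1$ and $\mathcal M_2$ individually. The hypothesis $q\mathcal M_iq\neq q\mathcal Bq$ for $i=1,2$ rules out degenerate cases and, combined with Step 1, should give that the trivial $\mathcal M_i$-$\mathcal M_i$-bimodule $\rL^2(\mathcal M_i)$ is weakly contained in the coarse bimodule $\rL^2(\mathcal M_i)\otimes\rL^2(\mathcal M_i)$ after cutting by some nonzero (central) projection. Concretely: the free-product structure means $\rL^2(\mathcal M)\ominus\rL^2(\mathcal M_i)$, as an $\mathcal M_i$-$\mathcal M_i$-bimodule, is a direct sum of bimodules of the form $\rL^2(\mathcal M_i)\otimes_{\mathcal B}\mathcal H\otimes_{\mathcal B}\rL^2(\mathcal M_i)$ with $\mathcal H$ a ``middle'' bimodule; amenability of $\mathcal B$ turns each of these into something weakly contained in $\rL^2(\mathcal M_i)\otimes\mathcal K\otimes\rL^2(\mathcal M_i)$ hence in a multiple of the coarse bimodule. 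So the only ``non-coarse'' part of $\rL^2(q\mathcal Mq)$ as a $q\mathcal M_iq$-bimodule is $\rL^2(q\mathcal M_iq)$ itself. Feeding Step 1's conclusion into this decomposition, and using that a von Neumann algebra $N$ is amenable iff $\rL^2(N)\subset_{\weak}\rL^2(N)\otimes\rL^2(N)$, one isolates a nonzero central projection $z_i\in\mathcal Z(\mathcal M_i)$ on which $\mathcal M_iz_i$ must be amenable. This is where the hypothesis $q\mathcal Bq\neq q\mathcal M_iq$ is used crucially: if $q\mathcal M_iq=q\mathcal Bq$ the ``trivial'' summand $\rL^2(q\mathcal M_iq)$ would itself already be inside the amenable world and there would be nothing to conclude.

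Step 3 (bookkeeping): get the conclusion for \emph{both} $i=1$ and $i=2$, not just one. In Ioana's tracial argument this symmetry comes out of the structure of the relative-amenability-over-an-amenable-base estimate — the malleable deformation treats the two free factors on equal footing — so I expect Step 1 actually to yield relative amenability to $q\mathcal M_1q$ \emph{and} to $q\mathcal M_2q$ simultaneously (this is the strengthening over \cite[Theorem 4.1]{Io12a} advertised in the introduction), after which Step 2 applies to each $i$ separately. Finally, one must promote the cut-down statement ``$q\mathcal M_iq$ has an amenable corner'' to ``$\mathcal M_i$ has an amenable direct summand'', i.e.\ produce a genuine central projection $z_i\in\mathcal Z(\mathcal M_i)$; this is routine, using that amenability passes to corners and direct sums and a maximality argument over central projections (and note $q\in\mathcal B\subset\mathcal M_i$, so $q\mathcal M_iq$ is a corner of $\mathcal M_i$).

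\medskip

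The main obstacle, I expect, is Step 1: making the malleability trick produce \emph{relative amenability inside $q\mathcal Mq$ to one of the $q\mathcal M_iq$} out of relative amenability to $q\mathcal Nq$ inside $q\widetilde{\mathcal M}q$, in the semifinite setting where one cannot appeal to a finite trace, to Connes' $\chi$-invariant type arguments, or — as the authors themselves flag in the introduction — to spectral gap. One has to run the convergence-of-the-deformation machinery (as in Theorem \ref{convergence-deformation}, the $c_n\to 0$ estimate of Lemma \ref{cn}, and the $\rL^2(\widetilde{\mathcal M},\Tr)$-orthogonality package of Notation \ref{notation2}) but now tracking a norm-one projection / hypertrace rather than a net of unitaries going weakly to zero, and check that the relevant completely positive maps stay bounded with respect to the semifinite trace. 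The amenability of $\mathcal B$ is what keeps all the Connes-fusion tensor products under control via $\mathcal H\otimes_{\mathcal B}\mathcal K\subset_{\weak}\mathcal H\otimes\mathcal K$; without it the whole scheme collapses, which is why the theorem is stated over an amenable base. Everything after Step 1 is, by comparison, standard bimodule bookkeeping.
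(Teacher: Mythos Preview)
Your bimodule-theoretic instincts in Step~2 are sound and essentially match the paper's Lemma~\ref{lem:weak-containment}: as a $q\mathcal M_1q$-bimodule, $\rL^2(\langle q\widetilde{\mathcal M}q,e_{q\mathcal Nq}\rangle)$ splits as a ``diagonal'' piece $\mathcal H_1=\bigoplus_{g\in\F_2}\rL^2(q\mathcal M_1q)\,u_g e_{q\mathcal Nq}u_g^*$ plus an orthogonal complement weakly contained in the coarse bimodule, and amenability of $\mathcal B$ is exactly what makes this work. But your Step~1 is where the scheme goes wrong.

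There is no ``upgrade'' from relative amenability to $q\mathcal Nq$ inside $q\widetilde{\mathcal M}q$ to relative amenability to $q\mathcal M_iq$ inside $q\mathcal Mq$; the paper never leaves the basic construction $\langle q\widetilde{\mathcal M}q,e_{q\mathcal Nq}\rangle$. Popa's malleability trick manipulates intertwining partial isometries, not hypertraces, and does not transport relative amenability between inclusions in the way you suggest. More seriously, you have misread the introduction's remark about spectral gap. What the authors cannot use is Ioana's \emph{non-Gamma} argument from \cite[Section~5]{Io12a}, because $\core(M)$ need not be full. What they \emph{do} use---and what your proposal omits entirely---is Kesten's spectral gap for random walks on $\F_2$. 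This is the engine of the proof and there is no way around it.

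Concretely, the argument runs by contradiction: assume $q\mathcal M_1q$ has no amenable direct summand. Take the net $(\xi_n)$ in $\rL^2(\langle q\widetilde{\mathcal M}q,e_{q\mathcal Nq}\rangle)$ witnessing relative amenability of $\alpha_t(q\mathcal Mq)$; the bimodule lemma forces $u_1^{t*}\xi_n u_1^t$ to asymptotically live in $\mathcal H_1$, so one may replace $\xi_n$ by $\eta_n\in u_1^t\mathcal H_1 u_1^{t*}$ with the same asymptotic centrality for $\alpha_t(q\mathcal Mq)$. Now pick $y\in q\mathcal M_2q\ominus q\mathcal Bq$ with $\|y\|_2=1$ (this is where $q\mathcal M_2q\neq q\mathcal Bq$ enters) and compute $\langle\alpha_t(y)\eta_n\mid\eta_n\alpha_t(y)\rangle$. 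Expanding $\eta_n=\sum_g u_1^t x_{n,g}u_g e_{q\mathcal Nq}u_g^* u_1^{t*}$ and $u_i^t$ in Fourier coefficients, this inner product becomes a sum governed by a convolution measure $\mu=\check\mu_1*\mu_2*\check\mu_2*\mu_1$ on $\F_2$, and one gets the bound
\[
|\langle\alpha_t(y)\eta_n\mid\eta_n\alpha_t(y)\rangle|\;\le\;\|T\|_\infty\,\|\eta_n u_1^t y\|_2\,\|yu_1^{t*}\eta_n\|_2,
\qquad T=\sum_{g}\mu(g)\lambda_g.
\]
Since $\mu$ has support generating $\F_2$ and $\mu(e)>0$, Kesten's criterion gives $\|T\|_\infty<1$. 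But asymptotic centrality forces the left side to tend to $1$, contradiction. The argument is symmetric in $i$, which is why one gets an amenable direct summand in \emph{both} $\mathcal M_1$ and $\mathcal M_2$---not via malleable symmetry as you surmise, but simply by rerunning the same proof with the roles swapped.
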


Let $\Tr_{\widetilde {\mathcal M}}$ be the semifinite faithful normal trace on $\widetilde {\mathcal M} = \mathcal M \ast_{\mathcal B} (\mathcal B \ovt \rL(\F_2))$. Consider the basic construction $\langle q\widetilde{\mathcal M} q, e_{q\mathcal N q} \rangle$ associated with the inclusion of tracial von Neumann algebras $q\mathcal N q \subset q\widetilde{\mathcal M}q$. 

We denote by $\tau = \frac{1}{\Tr_{\widetilde{\mathcal M}}(q)}\Tr_{\widetilde{\mathcal M}}(q \cdot q)$ the faithful normal tracial state on $q \widetilde{\mathcal M}q$ and by $\|\cdot\|_2$ the $\rL^2$-norm on $q\widetilde{\mathcal M}q$ associated with $\tau$. We then simply denote by $\Tr$ the canonical semifinite faithful normal trace on $\langle q\widetilde{\mathcal M}q, e_{q \mathcal N q}\rangle$ given by $\Tr(a e_{q \mathcal N q} b) = \tau(ab)$ for all $a, b \in q\widetilde{\mathcal M}q$. Observe that $q \widetilde{\mathcal M} q = q \mathcal N q \rtimes \F_2$. Following \cite[Section 4]{Io12a}, we define the $q\mathcal M_1q$-$q\mathcal M_1q$-bimodule
\[
  \mathcal H_1
  =
  \bigoplus_{g \in \F_2} \mathrm{L}^2(q \mathcal M_1 q) \, u_g e_{q\mathcal Nq} u_g^*
  \subset \mathrm{L}^2(\langle q \widetilde{\mathcal M} q, e_{q \mathcal Nq} \rangle).
\]
Denote by $\mathcal H = \mathrm{L}^2(\langle q \widetilde{\mathcal M} q, e_{q\mathcal Nq} \rangle, \Tr) \ominus \mathcal H_1$.

\begin{lem}
\label{lem:weak-containment}
As  $q\mathcal M_1 q$-$q\mathcal M_1 q$-bimodules, we have that $\mathcal H \subset_{\weak} \mathrm{L}^2(q\mathcal M_1 q) \otimes \mathrm{L}^2(q\mathcal M_1 q)$.
\end{lem}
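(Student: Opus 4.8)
The goal is to show that the complementary bimodule $\mathcal H = \mathrm{L}^2(\langle q\widetilde{\mathcal M}q, e_{q\mathcal Nq}\rangle,\Tr) \ominus \mathcal H_1$ is weakly contained in the coarse $q\mathcal M_1q$-bimodule. The plan is to produce a concrete orthonormal-type decomposition of $\mathcal H$ indexed by the free-product word structure of $\mathcal N$ and $\F_2$, and to show each piece embeds into a multiple of the coarse bimodule. First I would recall that $\langle q\widetilde{\mathcal M}q, e_{q\mathcal Nq}\rangle$ is spanned (as a left-right $q\widetilde{\mathcal M}q$-module) by elements $a\, e_{q\mathcal Nq}\, b$ with $a,b\in q\widetilde{\mathcal M}q$, and that $\mathrm{L}^2$ of the basic construction decomposes as $\bigoplus_{g\in\F_2}\mathrm{L}^2(q\widetilde{\mathcal M}q)\,u_g e_{q\mathcal Nq}$ using $\widetilde{\mathcal M}=\mathcal N\rtimes\F_2$ and the fact that $e_{q\mathcal Nq}$ commutes with $\mathcal N$. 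Since $\mathcal M_1\subset\mathcal M\subset\widetilde{\mathcal M}$ and $\widetilde{\mathcal M}=\mathcal M\ast_{\mathcal B}(\mathcal B\ovt\rL(\F_2))$, I would further decompose $\mathrm{L}^2(q\widetilde{\mathcal M}q)$ as a left $q\mathcal M_1q$-module into $\mathrm{L}^2(q\mathcal M_1q)$ and its orthogonal complement, split along reduced words; the piece coming from $\mathrm{L}^2(q\mathcal M_1q)u_ge_{q\mathcal Nq}u_g^*$ is exactly $\mathcal H_1$, so that $\mathcal H$ is spanned by vectors $\xi\, u_g e_{q\mathcal Nq}\, \eta$ where the left $q\mathcal M_1q$-leg of $\xi$ (after moving $u_g$ past) lands in a reduced word of positive length beginning with a letter not in $q\mathcal M_1q$, or $\eta\notin q\mathcal N q u_g^*$ in the appropriate sense.

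The key computation is then to estimate, for $x,y\in q\mathcal M_1q$ and a reduced-word vector $\zeta = (\text{first letter from }\mathcal M_2\ominus\mathcal B\text{ or from }\rL(\F_2))\cdots$, the inner products $\langle x\zeta y,\ x'\zeta y'\rangle$ and show they factor through the coarse bimodule — i.e. that the left and right $q\mathcal M_1q$-actions on $\mathcal H$ become ``decoupled'' because any reduced word sitting between a left $\mathcal M_1$-letter and a right $\mathcal M_1$-letter must contain a genuine $\mathcal M_2\ominus\mathcal B$ or $\rL(\F_2)\ominus\C$ letter, and such letters are free from and orthogonal to everything in $\mathcal M_1$. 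Concretely, I would exhibit an isometric $q\mathcal M_1q$-$q\mathcal M_1q$-bimodular map (or a net of such maps, enough to get weak containment) from each homogeneous piece of $\mathcal H$ into $\mathrm{L}^2(q\mathcal M_1q)\otimes(\text{multiplicity space})\otimes\mathrm{L}^2(q\mathcal M_1q)$, the multiplicity space being spanned by the ``middle'' of the reduced word together with the $\F_2$-label $g$ and the $e_{q\mathcal Nq}$. This is essentially the content of \cite[Lemma 1.7]{AD93} applied degree-by-degree, using that $\mathcal B$ is amenable so that the bimodule tensor products over $\mathcal B$ are weakly contained in the ordinary tensor products, combined with the orthogonality relations of Lemma \ref{orthogonalranges}.

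The main obstacle I anticipate is bookkeeping: correctly identifying which reduced-word components of $\mathrm{L}^2(\langle q\widetilde{\mathcal M}q,e_{q\mathcal Nq}\rangle,\Tr)$ fall into $\mathcal H_1$ versus $\mathcal H$, and handling the cases where the left-most or right-most letter of a word does lie in $\mathcal M_1\ominus\mathcal B$ (so it is absorbed into the left or right $q\mathcal M_1q$-action) — one must check that after absorbing all such letters, the remaining word is nontrivial precisely on the complement of $\mathcal H_1$. A secondary subtlety is that $q\mathcal M_1q$ need not be a factor nor have $\mathcal B$ as an expectation-complemented subalgebra with nice trace, but since $\Tr$ is a trace on everything in sight and $q\in\mathcal B$, the expectations $E_{q\mathcal M_1q}$, $E_{q\mathcal Bq}$ are all trace-preserving, so the $\mathrm{L}^2$-orthogonal decompositions behave well. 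Once the homogeneous decomposition is set up, each summand is manifestly of the form $\mathrm{L}^2(q\mathcal M_1q)\otimes_{q\mathcal Bq}\mathcal K\otimes_{q\mathcal Bq}\mathrm{L}^2(q\mathcal M_1q)$ for an auxiliary $q\mathcal Bq$-bimodule $\mathcal K$, and amenability of $\mathcal B$ (hence of $q\mathcal Bq$) upgrades this to a subrepresentation of the coarse bimodule, giving $\mathcal H\subset_{\weak}\mathrm{L}^2(q\mathcal M_1q)\otimes\mathrm{L}^2(q\mathcal M_1q)$ after taking the (at most countable) direct sum over degrees and words.
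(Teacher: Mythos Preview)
Your plan is correct in spirit and lands on the same key ingredient as the paper---amenability of $\mathcal B$ via \cite[Lemma 1.7]{AD93} to pass from $\otimes_{\mathcal B}$ to $\otimes$---but the paper organizes the decomposition more efficiently, and there is one structural observation you are missing that makes the bookkeeping you worry about disappear.

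First, a minor point: your one-sided decomposition $\bigoplus_{g\in\F_2}\mathrm{L}^2(q\widetilde{\mathcal M}q)\,u_g e_{q\mathcal Nq}$ is not literally an orthogonal direct sum (each summand is all of $\mathrm{L}^2(q\widetilde{\mathcal M}q)e_{q\mathcal Nq}$), and it hides the right $q\mathcal M_1q$-action. The paper instead uses the two-sided decomposition $\mathrm{L}^2(\langle q\widetilde{\mathcal M}q, e_{q\mathcal Nq}\rangle)\cong\bigoplus_{g,h\in\F_2}\mathrm{L}^2(q\mathcal Nq)\,u_g e_{q\mathcal Nq}u_h$, under which each summand becomes the $q\mathcal M_1q$-bimodule $\mathrm{L}^2(q\mathcal Nq)^{gh}$ with twisted right action $\xi\cdot y=\xi\, u_{gh}yu_{gh}^*$. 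This immediately identifies $\mathcal H$ as a countable direct sum of copies of $\mathrm{L}^2(q\mathcal Nq)\ominus\mathrm{L}^2(q\mathcal M_1q)$ (the diagonal pieces $gh=e$) and of $\mathrm{L}^2(q\mathcal Nq)^g$ for $g\neq e$ (the off-diagonal pieces).

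The decisive observation---which replaces your entire reduced-word analysis---is that $\mathcal N$ is \emph{itself} an amalgamated free product over $\mathcal B$ having $\mathcal M_1$ as a free factor: $\mathcal N\cong\mathcal M_1*_{\mathcal B}\mathcal P$, and for each $g\neq e$ also $\mathcal N\cong\mathcal M_1*_{\mathcal B}u_g\mathcal M_1u_g^* *_{\mathcal B}\mathcal P_g$. Standard AFP bimodule theory then gives, in one line, isomorphisms of $\mathcal M_1$-$\mathcal M_1$-bimodules $\mathrm{L}^2(\mathcal N)\ominus\mathrm{L}^2(\mathcal M_1)\cong\mathrm{L}^2(\mathcal M_1)\otimes_{\mathcal B}\mathcal L\otimes_{\mathcal B}\mathrm{L}^2(\mathcal M_1)$ and $\mathrm{L}^2(\mathcal N)^g\cong\mathrm{L}^2(\mathcal M_1)\otimes_{\mathcal B}\mathcal L_g\otimes_{\mathcal B}\mathrm{L}^2(\mathcal M_1)$ for suitable $\mathcal B$-bimodules $\mathcal L,\mathcal L_g$. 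Your reduced-word parsing would eventually reproduce exactly these isomorphisms, but the AFP viewpoint packages the combinatorics for free and removes the case analysis you anticipate about which letter sits at which end of the word.
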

\begin{proof}
The proof goes along the same lines as \cite[Lemma 4.2]{Io12a}.  First observe that since $q\widetilde{\mathcal M}q = q\mathcal Nq \rtimes \F_2$, we have
\[
  \mathrm{L}^2(\langle q \mathcal M q, e_{q \mathcal Nq} \rangle )
  \cong
  \bigoplus_{g,h \in \F_2} \mathrm{L}^2(q \mathcal Nq) \, u_g e_{q \mathcal Nq} u_h.
\]
So it suffices to prove that for all $g,h \in \F_2$ such that $h \neq g^{-1}$, as $q \mathcal M_1 q$-$q \mathcal M_1 q$-bimodules, we have
\begin{align*}
\left(\mathrm{L}^2(q\mathcal Nq) \ominus \mathrm{L}^2(q\mathcal M_1 q) \right)u_g e_{q\mathcal Nq} u_g^* & \subset_{\weak} \mathrm{L}^2(q\mathcal M_1 q) \otimes \mathrm{L}^2(q\mathcal M_1 q) \\
  \mathrm{L}^2(q\mathcal Nq) \, u_g e_{q\mathcal Nq} u_h & \subset_{\weak} \mathrm{L}^2(q\mathcal M_1 q) \ot \mathrm{L}^2(q\mathcal M_1 q).
\end{align*}

Denote by $\mathrm{L}^2(q\mathcal Nq)^g$ the $q\mathcal M_1 q$-$q\mathcal M_1 q$-bimodule $\mathrm{L}^2(q\mathcal Nq)$ with left and right action given by $x \cdot \xi \cdot y = x \xi u_gy u_g^*$ for all $x, y \in q \mathcal M_1 q$ and all $\xi \in \mathrm{L}^2(q \mathcal N q)$. Likewise, define the $\mathcal M_1$-$\mathcal M_1$-bimodule $\rL^2(\mathcal N)^g$. As $q \mathcal M_1 q$-$q \mathcal M_1 q$-bimodules, we have
\begin{align*}
\bigoplus_{g \in \F_2}
  \left(\mathrm{L}^2(q\mathcal Nq) \ominus \mathrm{L}^2(q \mathcal M_1 q) \right)u_g e_{q \mathcal Nq} u_g^*
 & \cong
  \bigoplus_{i = 1}^\infty \left( \mathrm{L}^2(q\mathcal Nq) \ominus \mathrm{L}^2(q \mathcal M_1 q)\right) \\
\bigoplus_{g,h \in \F_2, h \neq g^{-1}} \mathrm{L}^2(q\mathcal Nq) \, u_ge_{q\mathcal Nq} u_h
 & \cong
  \bigoplus_{i = 1}^\infty \bigoplus_{g \in \F_2 \setminus \{e\}} \mathrm{L}^2(q\mathcal Nq)^{g}.
\end{align*}

Put $\mathcal P = \left(\bigcup_{h \in \F_2 \setminus \{e\}} u_h \mathcal M u_h^* \cup \mathcal M_2\right)\dpr$ and $\mathcal P_g =  \left(\bigcup_{h \in \F_2 \setminus \{e,g\}} u_h \mathcal M u_h^* \cup \mathcal M_2 \cup u_g \mathcal M_2 u_g^*\right)\dpr$ for all $g \in \F_2$.  Then we have
$$\mathcal N \cong \mathcal M_1 *_{\mathcal B} \mathcal P \cong \mathcal M_1 *_{\mathcal B} u_g\mathcal M_1u_g^* *_{\mathcal B} \mathcal P_g, \forall g \in \F_2 \setminus \{e\}.$$  
Using \cite[Section 2]{ueda-pacific}, there are $\mathcal B$-$\mathcal B$-bimodules $\mathcal L$ and $\mathcal L_g$ for $g \in \F_2 \setminus \{e\}$, such that as $\mathcal M_1$-$\mathcal M_1$-bimodules, we have
\begin{align*}
\mathrm{L}^2(\mathcal N) \ominus \rL(\mathcal M_1) & \cong \mathrm{L}^2(\mathcal M_1) \otimes_{\mathcal B} \mathcal L \otimes_{\mathcal B} \mathrm{L}^2(\mathcal M_1) \\
\rL^2(\mathcal N)^g & \cong \mathrm{L}^2(\mathcal M_1) \otimes_{\mathcal B} \mathcal L_g \otimes_{\mathcal B} \mathrm{L}^2(\mathcal M_1).
\end{align*}

Since $\mathcal B$ is amenable, we have that $\mathrm{L}^2(\mathcal B) \subset_{\weak} \mathrm{L}^2(\mathcal B) \otimes \mathrm{L}^2(\mathcal B)$ as $\mathcal B$-$\mathcal B$-bimodules.  Using \cite[Lemma 1.7]{AD93}, we obtain that, as $q\mathcal M_1 q$-$q \mathcal M_1 q$-bimodules,
\begin{align*}
  \mathrm{L}^2(q\mathcal Nq) \ominus \mathrm{L}^2(q \mathcal M_1 q)
  & \cong
  q \left(\mathrm{L}^2(\mathcal M_1) \ot_{\mathcal B} \mathcal L \ot_{\mathcal B} \mathrm{L}^2(\mathcal M_1) \right)q \\
  & \subset_{\weak} q \left(\mathrm{L}^2(\mathcal M_1) \otimes \mathcal L \otimes \mathrm{L}^2(\mathcal M_1) \right)q \\
  & \subset_{\weak}
  q \left(\mathrm{L}^2(\mathcal M_1) \otimes \mathrm{L}^2(\mathcal M_1) \right)q.
\end{align*}
Since $q \left(\mathrm{L}^2(\mathcal M_1) \otimes \mathrm{L}^2(\mathcal M_1) \right)q$ is isomorphic to a $q \mathcal M_1 q$-$q \mathcal M_1 q$-subbimodule of $\bigoplus_{i = 1}^\infty \mathrm{L}^2(q \mathcal M_1 q) \ot \mathrm{L}^2(q \mathcal M_1 q)$, we infer that, as $q\mathcal M_1q$-$q\mathcal M_1q$-bimodules, 
$$\mathrm{L}^2(q\mathcal Nq) \ominus \mathrm{L}^2(q \mathcal M_1 q)\subset_{\weak}\mathrm{L}^2(q\mathcal M_1 q) \otimes \mathrm{L}^2(q \mathcal M_1 q).$$ 
Similarly, for all $g \in \F_2 \setminus \{e\}$ we get that, as $q\mathcal M_1q$-$q\mathcal M_1q$-bimodules,
\begin{equation*}
 \mathrm{L}^2(q\mathcal Nq)^g \subset_{\weak}
  \mathrm{L}^2(q\mathcal M_1 q) \otimes \mathrm{L}^2(q \mathcal M_1 q).\qedhere
\end{equation*}
\end{proof}

\begin{proof}[Proof of Theorem \ref{dichotomy-amenability}]
Since $\alpha_t(q \mathcal M q)$ is amenable relative to $q\mathcal Nq$ inside $q \widetilde{\mathcal M} q$, we find a net of vectors $\xi_n \in \mathrm{L}^2(\langle q \widetilde{\mathcal M} q, e_{q\mathcal Nq} \rangle, \Tr)$ for $n \in I$, such that
\begin{itemize}
\item $\langle x\xi_n \, | \, \xi_n \rangle_{\Tr} \rightarrow \tau(x)$ for all $x \in q \widetilde{\mathcal M} q$, and
\item $\| x \xi_n - \xi_n x \|_{2, \Tr} \rightarrow 0$ for all $x \in \alpha_t(q \mathcal M q)$.
\end{itemize}
Observe that using the proof of \cite[Theorem 2.1]{OP07} we may assume that $\xi_n \geq 0$ so that $\langle x\xi_n \, | \, \xi_n \rangle_{\Tr} = \Tr(x \xi_n^2) = \langle \xi_n x \, | \, \xi_n \rangle_{\Tr}$ for all $x \in q \widetilde{\mathcal M} q$ and all $n \in I$. Since $\|\xi_n\|_{2, \Tr} \to 1$, we may further assume that $\|\xi_n\|_{2, \Tr} = 1$ for all $n \in I$.

By contradiction, assume that for some $i \in \{1,2\}$, $q \mathcal M_iq$ has no amenable direct summand.  Without loss of generality, we may assume that $i = 1$.  Denote by $P_{\mathcal H_1}: \mathrm{L}^2(\langle q \widetilde{\mathcal M} q, e_{q\mathcal Nq} \rangle)\to \mathcal H_1$ the orthogonal projection. Observe that $P_{\mathcal H_1}$ is the orthogonal projection corresponding to the unique trace preserving faithful normal conditional expectation $E_{\mathcal Q} : q\widetilde{\mathcal M}q \to \mathcal Q$ onto the von Neumann subalgebra $\mathcal Q = \bigvee \{q \mathcal M_1 q, u_g e_{q \mathcal N q} u_g^* : g \in \mathbf F_2\} \subset q \widetilde{\mathcal M}q$. We claim that $\lim_n \|u_1^{t*} \xi_n u_1^t - P_{\mathcal H_1}(u_1^{t*}\xi_n u_1^t)\|_{2, \Tr} = 0$.  If this is not the case, let $\zeta_n = (1 - P_{\mathcal H_1})(u_1^{t*} \xi_n u_1^t) \in \mathcal H$ and observe that $\limsup_n \|\zeta_n\|_{2, \Tr} > 0$. Arguing as in the proof of \cite[Lemma 2.3]{Io12a}, we may further assume that $\liminf_n \|\zeta_n\|_{2, \Tr} > 0$.

Then $\zeta_n \in \mathcal H$ is a net of vectors which satisfies the following conditions:
\begin{itemize}
\item $\liminf_n \|\zeta_n\|_{2, \Tr} > 0$;
\item $\limsup_n \|x \zeta_n\|_{2, \Tr} \leq \|x\|_2$ for all $x \in q \mathcal M_1 q$;
\item $\lim_n \| y \zeta_n - \zeta_n y\|_{2, \Tr} = 0$ for all $y \in q \mathcal M_1 q$.
\end{itemize}
Since as $q\mathcal M_1q$-$q\mathcal M_1q$-bimodules, we have that $\mathcal H\subset_{\weak} \mathrm{L}^2(q \mathcal M_1 q) \ot \mathrm{L}^2(q \mathcal M_1 q)$ by Lemma \ref{lem:weak-containment}, it follows that $q \mathcal M_1 q$ has an amenable direct summand by Connes's result \cite{connes76}. This contradicts our assumption and we have shown that $\lim_n \| \xi_n  - u_1^t P_{\mathcal H_1}(u_1^{t*}\xi_n u_1^t) u_1^{t*}\|_{2, \Tr} = \lim_n \|u_1^{t*} \xi_n u_1^t - P_{\mathcal H_1}(u_1^{t*}\xi_n u_1^t)\|_{2, \Tr} = 0$.

Put $\mathcal L_1 = u_1^t \mathcal H_1 u_1^{t*}$ and denote by $P_{\mathcal L_1} : \mathrm{L}^2(\langle q \widetilde{\mathcal M} q, e_{q\mathcal Nq} \rangle)\to \mathcal L_1$ the orthogonal projection. Put $\eta_n = P_{\mathcal L_1}(\xi_n)$ and observe that $\eta_n \in \mathcal L_1$ and $\eta_n \geq 0$. We moreover have $\lim_n \|\xi_n - \eta_n\|_{2, \Tr} = 0$.  So $\eta_n \in \mathcal L_1$ is a net of vectors which satisfy 
\begin{enumerate}
\item [$(\ast)$] $\langle x\eta_n \, | \, \eta_n \rangle_{\Tr} = \langle \eta_n x\, | \, \eta_n \rangle_{\Tr} \rightarrow \tau(x)$ for all $x \in q \widetilde{\mathcal M} q$, and
\item [$(\ast \ast)$] $\| x \eta_n - \eta_n x \|_{2, \Tr} \rightarrow 0$ for all $x \in \alpha_t(q \mathcal M q)$.
\end{enumerate}

We have $\eta_n = \sum_{g \in \F_2}u_1^t x_{n,g} u_g e_{q\mathcal Nq} u_g^* u_1^{t*}$ with $x_{n,g} \in \mathrm{L}^2(q \mathcal M_1q)$.  Since $\eta_n = \eta_n^*$ for all $n \in I$, we may assume that $x_{n, g} = x_{n, g}^*$ for all $n \in I$ and all $g \in \F_2$. Next, we claim that we may further assume that $x_{n, g} \in q \mathcal M_1 q$ with $x_{n, g} = x_{n, g}^*$ for all $n \in I$ and all $g \in \F_2$. 

To do so, define the set $J$ of triples $j = (X, Y, \varepsilon)$ where $X \subset \Ball(q\widetilde{\mathcal M}q)$, $Y \subset \Ball(\alpha_t(q \mathcal M q))$ are finite subsets and $\varepsilon > 0$. We make $J$ a directed set by putting $(X, Y, \varepsilon) \leq (X' , Y' , \varepsilon')$ if and only if $X \subset X'$, $Y \subset Y'$ and $\varepsilon' \leq \varepsilon$. Let $j = (X, Y, \varepsilon) \in J$. There exists $n \in I$ such that $|\langle x\eta_n \, | \, \eta_n \rangle_{\Tr} - \tau(x) | \leq \varepsilon/2$ and $\|y \eta_n - \eta_n y\|_{2, \Tr} \leq \varepsilon/2$ for all $x \in X$ and all $y \in Y$. Let $\upsilon \in \ell^2(\F_2)_+$ such that $\|\upsilon\|_{\ell^2(\F_2)} = 1$. For each $g \in \F_2$, choose $y_{j, g} \in q \mathcal M_1 q$ such that $y_{j, g} = y_{j, g}^*$ and $\|x_{n, g} - y_{j, g}\|_2 \leq \upsilon(g) \, \varepsilon/4$. Put $ \eta_j' = \sum_{g \in \F_2}u_1^t y_{j,g} u_g e_{q\mathcal Nq} u_g^* u_1^{t*} \in \mathcal L_1$ and observe that $ \eta_j' =  \eta_j'^*$ and $\|\eta_n - \eta_j'\|_{2, \Tr} \leq \varepsilon/4$. We get $|\langle x\eta_j' \, | \, \eta_j' \rangle_{\Tr} - \tau(x) | \leq \varepsilon + \varepsilon^2/16$ and $\|y \eta_j' - \eta_j' y\|_{2, \Tr} \leq \varepsilon$ for all $x \in X$ and all $y \in Y$. Then the net $(\eta_j')_{j \in J}$ clearly satisfies Conditions $(\ast)$ and $(\ast \ast)$ above. This finishes the proof of the claim.

Fix any $y \in q\mathcal M_2q \ominus q\mathcal B q$ satisfying $\|y\|_2 = 1$.  Then we have
\[
  \langle \alpha_t(y) \eta_n \, | \, \eta_n \alpha_t(y) \rangle_{\Tr}
  \rightarrow
  1.
\]
Expanding $\alpha_t(y)$ and $\eta_n$, we obtain
\begin{align*}
\langle \alpha_t(y) \eta_n \, | \, \eta_n \alpha_t(y) \rangle_{\Tr} 
  & =
  \sum_{g, h \in \F_2}
  {
    \left \langle u_2^t y u_2^{t *} \, u_1^t x_{n,g} u_g e_{q\mathcal Nq} u_g^* u_1^{t*} \, | \,
    u_1^t x_{n,h} u_h e_{q \mathcal Nq} u_h^* u_1^{t*} \, u_2^t y u_2^{t *} \right \rangle_{\Tr}
  } \\
  & =
  \sum_{g, h \in \F_2}
  {
    \left \langle u_h^* x_{n,h}^* u_1^{t*} u_2^t y u_2^{t *}  u_1^t x_{n,g} u_g \, e_{q\mathcal Nq} \, | \,
    e_{q\mathcal Nq} \, u_h^* u_1^{t*}  u_2^t y u_2^{t *} u_1^t u_g \right \rangle_{\Tr}
  } \\
  & =
  \sum_{g,h \in \F_2}
  {
    \tau \left (E_{q\mathcal Nq}(u_g^* u_1^{t*} u_2^t y^* u_2^{t *} u_1^t u_h) \, E_{q\mathcal Nq}(u_h^* x_{n,h}^* u_1^{t *} u_2^t y u_2^{t *} u_1^t x_{n,g} u_g) \right)
  }.
\end{align*}
Recall from Section \ref{section-malleable} the definition of the Hilbert spaces $\mathcal K_k$ for  $k \in \N$ and denote by $b_{n,g} = E_{q \mathcal B q}(x_{n,g})$.  Since we have 
$$
E_{q\mathcal Nq}\left(u_g^* u_1^{t*} u_2^t y^* u_2^{t *} u_1^{t} u_h \right) \in \mathcal K_1,
$$
$$
E_{q\mathcal Nq}\left(u_h^* (x_{n,h} - b_{n,g})^* u_1^{t *} u_2^t y u_2^{t *} u_1^t b_{n,g} u_g \right) \text{ and } E_{q\mathcal Nq}\left(u_h^* b_{n,g}^* u_1^{t *} u_2^t y u_2^{t *} u_1^t (x_{n, g} - b_{n,g}) u_g\right)  \in \mathcal K_2,
$$
$$ E_{q\mathcal Nq}\left(u_h^* (x_{n,h} - b_{n,g})^* u_1^{t *} u_2^t y u_2^{t *} u_1^t (x_{n,g} - b_{n,g}) u_g\right) \in \mathcal K_3,
$$
we get
\begin{align*}
  \langle \alpha_t(y) \eta_n \, | \, \eta_n \alpha_t(y) \rangle_{\Tr} 
  & =
  \sum_{g,h \in \F_2}
  {
    \tau \left (E_{q\mathcal Nq}(u_g^* u_1^{t*} u_2^t y^* u_2^{t *} u_1^{t} u_h) \, E_{q\mathcal Nq}(u_h^* b_{n,h}^* u_1^{t *} u_2^t y u_2^{t *} u_1^t b_{n,g} u_g) \right)
  } \\
  & =
    \sum_{g,h \in \F_2}
  {
    \tau \left (E_{q\mathcal Nq}(u_g^* u_1^{t*} u_2^t y^* u_2^{t *} u_1^{t} u_h) \, E_{q \mathcal Nq}(u_h^* u_1^{t *} u_2^t (b_{n,h}^*  y b_{n,g}) u_2^{t *} u_1^t u_g) \right)
  }.
\end{align*}

As in the proof of Theorem \ref{dichotomy-intertwining}, for $i \in \{ 1, 2 \}$, put $G_1 = \langle a \rangle$ and $G_2 = \langle b \rangle$ so that $u_1 = u_a$ and $u_2 = u_b$. Denote by $(\beta_i(g))_{g \in G_i}$ the Fourier coefficients of $u_i^t$. For $g, h \in \F_2$, define the isometry $W_{g, h} : \rL^2(\mathcal M_2) \ominus \rL^2(\mathcal B) \to \rL^2(\widetilde{\mathcal M})$ by $W_{g, h}(x) = u_g x u_h^*$ for $x \in \mathcal M_2 \ominus \mathcal B$ such that $\Tr_{\mathcal M}(x^* x) < \infty$. Thanks to Lemma \ref{orthogonalranges}, the isometries $W_{g, h}$ have pairwise orthogonal ranges when $(g, h)$ are pairwise distinct.  For all $z \in q \mathcal M_2 q \ominus q \mathcal B q$ and all $g,h \in \F_2$, using calculation $(\ref{esp})$, we obtain
\begin{align*}
  E_{q\mathcal Nq}(u_h^* u_1^{t *} u_2^t z u_2^{t *} u_1^t u_g) 
  & =
  \sum_{r,r' \in G_1 ,s,s' \in G_2}
  {
    \beta_1(r)
    \beta_2(s)
    \beta_2(s')
    \beta_1(r')
    \, E_{q \mathcal Nq} ( W_{h^{-1} r^{-1}s, g^{-1} r'^{-1}s'}(z) )
   } \\
  & =
  \sum_{\substack{ r,r' \in G_1, s,s' \in G_2 \\ h^{-1} r^{-1} s s'^{-1} r' g = 1 }}
  {
    \beta_1(r)
    \beta_2(s)
    \beta_2(s')
    \beta_1(r')
    \,  W_{h^{-1} r^{-1}s, g^{-1} r'^{-1} s'}(z)
  }.
\end{align*}
Using the facts that $G_1 \cap G_2 = \{e\}$ and that the isometries $W_{g', h'}$ have pairwise orthogonal ranges when $(g', h')$ are pairwise distinct, we get 
\begin{align*} 
& \quad \tau \left (
    E_{q\mathcal Nq}(u_g^* u_1^{t*} u_2^t y^* u_2^{t *} u_1^{t} u_h) \, E_{q\mathcal Nq}(u_h^* u_1^{t *} u_2^t (b_{n,h}^*  y b_{n,g}) u_2^{t *} u_1^t u_g)
  \right) \\
   &= 
  \sum_{\substack{ r,r' \in G, s,s' \in G_2 \\ r s s' r'  = hg^{-1} }}
  {
    \beta_1(r^{-1}) ^2
    \beta_2(s) ^2
    \beta_2(s'^{-1}) ^2
    \beta_1(r') ^2
    \, \tau (y^* b_{n,h}^*y b_{n,g})
  }.
\end{align*}

For $i \in \{1,2\}$, define $\mu_i \in \Prob(\F_2)$ by $\mu_i(g) = \beta_i(g)^2$ if $g \in G_i$ and $\mu_i(g) = 0$ otherwise. Likewise, define $\check \mu_i \in \Prob(\F_2)$ by $\check \mu_i(g) = \mu_i(g^{-1})$ for all $g \in \F_2$. Put $\mu = \check \mu_1 \ast  \mu_2 \ast \check \mu_2 \ast \mu_1$. Since $y \in q\mathcal M_2q \ominus q\mathcal Bq$ and $x_{n, g} \in q\mathcal M_1q$, we obtain that
\begin{align*}
  \tau \left (
    E_{q\mathcal Nq}(u_g^* u_1^{t*} u_2^t y^* u_2^{t *} u_1^{t} u_h) \, E_{q\mathcal Nq}(u_h^* u_1^{t *} u_2^t (b_{n,h}^*  y b_{n,g}) u_2^{t *} u_1^t u_g)
  \right)
  &= 
  \mu(hg^{-1})\,\tau(y^* b_{n,h}^*y b_{n,g}) \\
  &= \mu(hg^{-1})\,\tau(y^* x_{n,h}^*y x_{n,g}).
\end{align*}

Summing over all $g, h \in \F_2$ and using Cauchy-Schwarz inequality, we get
\begin{align*}
  |\langle \alpha_t(y) \eta_n \, | \, \eta_n \alpha_t(y) \rangle_{\Tr}|
  & =
  \left |
    \sum_{g,h \in \F_2}{ \mu(hg^{-1})\tau(y^* x_{n,h}^*y x_{n,g}) }
  \right | \\
  & =
  \left |
    \sum_{g,h \in \F_2}{ \mu(g)\tau(y^* x_{n,h}^*y x_{n,g^{-1} h}) }
  \right | \\
  & \leq
  \sum_{g,h \in \F_2} { \mu(g) \|x_{n,h} y\|_2 \, \|y x_{n,g^{-1}h}\|_2 } \\
  & \leq
  \sum_{g \in \F_2} { \mu(g) \langle \zeta_n \, | \, \lambda_g(\zeta'_n) \rangle_{\ell^2(\F_2)}},
\end{align*}
where $\zeta_n = \sum_{h \in \F_2} \|x_{n,h} y\|_2 \, \delta_h$ and $\zeta'_n = \sum_{h \in \F_2} \|y x_{n,h}\|_2 \, \delta_h$.  Since we moreover have $u_1^{t*}\eta_n u_1^t = \sum_{g \in \F_2}  u_g e_{q \mathcal Nq} u_g^* \, x_{n,g}$, we get
\[
  \| u_1^{t *} \eta_n u_1^t y \|^2_{2, \Tr}
  =
  \sum_{g \in \F_2} \| u_g e_{q \mathcal Nq} u_g^* \, x_{n, g} y\|_{2, \Tr}^2 =  \sum_{g \in \F_2} \|x_{n,g} y\|_2^2 = \|\zeta_n\|_{\ell^2(\F_2)}^2.
\]
Likewise we have $\|\zeta'_n\|_{\ell^2(\F_2)} = \|y u_1^{t *} \eta_n u_1^t\|_{2, \Tr}$.

Denote by $T: \ell^2(\F_2) \rightarrow \ell^2(\F_2)$ the Markov operator defined by $T = \sum_{g \in \F_2} \mu(g) \lambda_g$.  Since the support of $\mu$ generates $\F_2$ and $\mu(e) > 0$ (see the proof of \cite[Lemma 3.4, Claim]{Io12a}), Kesten's criterion for amenability \cite{kesten} yields $\| T \|_\infty < 1$.  This gives
\begin{align*}
  | \langle \alpha_t(y)\eta_n \, | \, \eta_n \alpha_t(y) \rangle_{\Tr} |
  & \leq \langle \zeta_n \, | \, T \zeta_n' \rangle_{\ell^2(\F_2)} \\
  & \leq \|T\|_\infty \, \|\zeta_n \|_{\ell^2(\F_2)} \, \|\zeta_n' \|_{\ell^2(\F_2)} \\
  & = \| T \|_\infty \, \|u_1^{t *} \eta_n u_1^t y\|_{2, \Tr} \, \|y u_1^{t *} \eta_n u_1^t\|_{2, \Tr} \\
  & = \| T \|_\infty \, \| \eta_n u_1^t y\|_{2, \Tr} \, \|y u_1^{t*}\eta_n\|_{2, \Tr}.
\end{align*}
Since $\eta_n = \eta_n^*$,  we obtain 
\[
  \|\eta_n u_1^t y\|_{2, \Tr} \, \|y u_1^{t *} \eta_n\|_{2, \Tr}
  \rightarrow
  \|u_1^t y\|_2 \, \|y u_1^{t *}\|_2 
  = 
  \|y\|_2^2
  =
  1,
\]
hence $\limsup_n  | \langle \alpha_t(y)\eta_n \, | \, \eta_n \alpha_t(y) \rangle_{\Tr} | \leq \|T\|_\infty < 1$. This however contradicts the fact that
\[
  | \langle \alpha_t(y)\eta_n \, | \, \eta_n \alpha_t(y) \rangle_{\Tr} |
  \rightarrow
  1
\]
and hence our assumption that $q \mathcal M_1q$ had no amenable direct summand. Thus for all $i \in \{1, 2\}$, $q \mathcal M_i q$ has an amenable direct summand and so does $\mathcal M_i$. This finishes the proof of Theorem~\ref{dichotomy-amenability}.
\end{proof}

A combination of the proof  of the above Theorem \ref{dichotomy-amenability}  and the one of \cite[Theorem 4.1]{Io12a} shows that ``or" can be replaced with ``and" in Ioana's result \cite[Theorem 4.1]{Io12a}.

\begin{theo}
Let $M = M_1 \ast_B M_2$ be a tracial amalgamated free product von Neumann algebra. Assume that $M_1 \neq B \neq M_2$. Put $\widetilde M = M \ast_B (B \ovt  \rL(\F_2)) = N \rtimes \F_2$ where $N = \bigvee \{u_g M u_g^* : g \in \rL(\F_2)\}$. Let $t \in (0, 1)$ such that $\alpha_t(M)$ is amenable relative to $N$. 

Then for all $i \in \{1, 2\}$, there exists a nonzero projection $z_i \in \mathcal Z(M_i)$ such that $M_i z_i$ is amenable relative to $B$ inside $M$.
\end{theo}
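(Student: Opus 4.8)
The plan is to run the argument in the proof of Theorem~\ref{dichotomy-amenability} with $q=1$ throughout, replacing everywhere ``amenable direct summand'' by ``direct summand amenable relative to $\mathcal B$ inside $M$'' (the latter being the point where $B$ is allowed to be nonamenable), and to observe that the final Markov operator estimate is symmetric in the two free factors. Concretely, write $\widetilde M = M\ast_B (B\ovt\rL(\F_2)) = N\rtimes\F_2$ and consider the basic construction $\langle\widetilde M, e_N\rangle$ with its canonical trace. From the assumed relative amenability of $\alpha_t(M)$ with respect to $N$ inside $\widetilde M$, and using \cite[Theorem 2.1]{OP07} exactly as in the proof above, one gets a net of positive unit vectors $\xi_n\in\rL^2(\langle\widetilde M, e_N\rangle,\Tr)$ with $\langle x\xi_n\,|\,\xi_n\rangle_\Tr\to\tau(x)$ for all $x\in\widetilde M$ and $\|x\xi_n-\xi_n x\|_{2,\Tr}\to 0$ for all $x\in\alpha_t(M)$. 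Decompose $\rL^2(\langle\widetilde M, e_N\rangle)=\mathcal H_1\oplus\mathcal H$ with $\mathcal H_1=\bigoplus_{g\in\F_2}\rL^2(M_1)\,u_g e_N u_g^*$, as before.

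The first ingredient is the unamalgamated analogue of Lemma~\ref{lem:weak-containment}: following the proof of \cite[Lemma 4.2]{Io12a} together with the bimodule decomposition of an amalgamated free product over $B$ from \cite[Section 2]{ueda-pacific}, one shows that, as $M_1$-$M_1$-bimodules, $\mathcal H$ is weakly contained in a direct sum of bimodules of the form $\rL^2(M_1)\otimes_B\mathcal L_g\otimes_B\rL^2(M_1)$ with each $\mathcal L_g$ a $B$-$B$-bimodule (without invoking amenability of $B$, so no passage to the coarse bimodule is made). Consequently, by the Connes-type characterization of relative amenability via $B$-induced bimodules (the relative version of \cite{connes76} already used in \cite[Section 4]{Io12a}), any net of vectors $\zeta_n\in\mathcal H$ with $\liminf_n\|\zeta_n\|_{2,\Tr}>0$, $\limsup_n\|x\zeta_n\|_{2,\Tr}\le\|x\|_2$ and $\|x\zeta_n-\zeta_n x\|_{2,\Tr}\to 0$ for all $x\in M_1$ forces $M_1$ to have a nonzero direct summand that is amenable relative to $B$ inside $M_1$, hence inside $M$ (by composing the canonical norm-one projections $z\,\langle M, B\rangle\, z\to z\,\langle M_1, B\rangle\, z\to M_1 z$).

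Now argue by contradiction, assuming without loss of generality that $M_1$ has no direct summand amenable relative to $B$ inside $M$. As in the proof of Theorem~\ref{dichotomy-amenability}, with $u_1^t=\exp(\mathrm i t h_1)\in B\ovt\rL(\Z)\subset\widetilde M_1$ and $\alpha_t|_{M_1}=\Ad(u_1^t)$, the previous paragraph forces $\|u_1^{t*}\xi_n u_1^t-P_{\mathcal H_1}(u_1^{t*}\xi_n u_1^t)\|_{2,\Tr}\to 0$; putting $\mathcal L_1=u_1^t\mathcal H_1 u_1^{t*}$ and $\eta_n=P_{\mathcal L_1}(\xi_n)\ge 0$ we get $\|\xi_n-\eta_n\|_{2,\Tr}\to 0$, so $(\eta_n)$ still satisfies the conditions $(\ast)$ and $(\ast\ast)$ from the proof above, and after the approximation argument carried out there we may assume $\eta_n=\sum_{g\in\F_2}u_1^t x_{n,g}u_g e_N u_g^* u_1^{t*}$ with $x_{n,g}=x_{n,g}^*\in M_1$. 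Since $M_1\ne B$ and $M_2\ne B$, choose $y\in M_2\ominus B$ with $\|y\|_2=1$; then $(\ast\ast)$ gives $\langle\alpha_t(y)\eta_n\,|\,\eta_n\alpha_t(y)\rangle_\Tr\to 1$, whereas the identical computation with the isometries $W_{g,h}$ of Lemma~\ref{orthogonalranges} and the Markov operator $T=\sum_{g\in\F_2}\mu(g)\lambda_g$ on $\ell^2(\F_2)$, with $\mu=\check\mu_1\ast\mu_2\ast\check\mu_2\ast\mu_1$, yields $\limsup_n|\langle\alpha_t(y)\eta_n\,|\,\eta_n\alpha_t(y)\rangle_\Tr|\le\|T\|_\infty<1$ by Kesten's criterion \cite{kesten}, a contradiction. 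Hence $M_1$ has a nonzero projection $z_1\in\mathcal Z(M_1)$ with $M_1 z_1$ amenable relative to $B$ inside $M$; interchanging the roles of $M_1$ and $M_2$ and using $y\in M_1\ominus B$ in the Kesten step gives the same statement for $M_2$.

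The main obstacle is the first ingredient: isolating and justifying the ``relative Connes criterion'' — that a net of almost $M_1$-central, trace-bounded vectors in an $M_1$-$M_1$-bimodule weakly contained in a $B$-induced bimodule produces a direct summand of $M_1$ amenable relative to $B$ — together with the correct unamalgamated form of Lemma~\ref{lem:weak-containment}. Both are in substance already contained in \cite[Section 4]{Io12a}; the only genuinely new point, compared with the proof of Theorem~\ref{dichotomy-amenability}, is that the Markov operator estimate is symmetric in the two free factors, so it may be run with $y$ taken alternately in $M_2\ominus B$ and in $M_1\ominus B$, which is precisely what upgrades Ioana's ``or'' to ``and''.
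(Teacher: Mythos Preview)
Your proposal is correct and matches the paper's own approach, which consists of a single sentence pointing to precisely the combination you describe: take Ioana's proof of \cite[Theorem 4.1]{Io12a} (which already handles nonamenable $B$ via the $B$-induced bimodule version of Lemma~\ref{lem:weak-containment} and the relative Connes criterion) and replace its final step by the Markov operator computation from the proof of Theorem~\ref{dichotomy-amenability}, observing that the latter can be run independently for each $i\in\{1,2\}$ by swapping the roles of $M_1$ and $M_2$. Your identification of the one genuinely new point --- the symmetry of the Kesten estimate in the two factors --- is exactly right.
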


\section{Proofs of Theorems \ref{thmA} and \ref{thmB}}

\subsection{A general intermediate result}

Theorems \ref{thmA} and \ref{thmB} will be derived from the following very general result regarding Cartan subalgebras inside semifinite amalgamated free product von Neumann algebras.

\begin{theo}\label{general-semifinite}
Let $\mathcal M = \mathcal M_1 \ast_{\mathcal B} \mathcal M_2$ be a semifinite amalgamated free product von Neumann algebra with semifinite faithful normal trace $\Tr$. Assume that $\mathcal B$ is amenable, $\mathcal M_1$ has no amenable direct summand and for all nonzero projections $e \in \mathcal B$, we have $e \mathcal B e \neq e \mathcal M_2 e$.

Let $p \in \Proj(\mathcal B)$ and $\mathcal A \subset p \mathcal M p$ any regular amenable von Neumann subalgebra. Then there exists $q \in \Proj(\mathcal B)$ such that $\mathcal A  \preceq_{\mathcal M} q \mathcal B q$.
\end{theo}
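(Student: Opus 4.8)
The plan is to run a deformation/rigidity argument with the malleable deformation $(\alpha_t,\beta)$ from Section~\ref{section-malleable} and feed the output into Theorems~\ref{dichotomy-intertwining} and~\ref{dichotomy-amenability}. So suppose, for contradiction, that $\mathcal A\npreceq_{\mathcal M} q\mathcal B q$ for every $q\in\Proj(\mathcal B)$. Write $\mathcal P=\mathcal N_{p\mathcal M p}(\mathcal A)\dpr=p\mathcal M p$ (by regularity) and set $\widetilde{\mathcal M}=\mathcal M\ast_{\mathcal B}(\mathcal B\ovt\rL(\F_2))=\mathcal N\rtimes\F_2$ as in Section~\ref{section-malleable}. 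The first thing I would check is the dichotomy of Theorem~\ref{dichotomy-intertwining}: either $\mathcal A\preceq_{\mathcal M}q\mathcal B q$ for some $q$ (excluded), or $\mathcal N_{p\mathcal M p}(\mathcal A)\dpr=p\mathcal M p\preceq_{\mathcal M}q_i\mathcal M_iq_i$ for some $i$ and some $q_i\in\Proj(\mathcal M_i)$, or for every $t\in(0,1)$ there is a net $w_k\in\mathcal U(\mathcal A)$ with $\|E_{\mathcal N}(x^*\alpha_t(w_k)y)\|_{2,\Tr}\to 0$ for all $x,y\in p\widetilde{\mathcal M}$. The middle alternative, $p\mathcal M p\preceq_{\mathcal M}q_i\mathcal M_iq_i$, should be ruled out by Proposition~\ref{intertwining-afp}: the hypotheses ``$e\mathcal B e\neq e\mathcal M_2 e$ for all nonzero $e\in\mathcal B$'' together with the assumption that $\mathcal M$ has no corner embeddable in $\mathcal B$ (which follows from $\mathcal A\npreceq_{\mathcal M}q\mathcal B q$ since $\mathcal A\subset p\mathcal M p$, using that $\mathcal M_1$ nonamenable forces $\mathcal B\neq\mathcal M_1$ so the AFP is genuinely free) give exactly the two bullet points of Proposition~\ref{intertwining-afp}, hence $e\mathcal M e\npreceq_{\mathcal M}f\mathcal M_if$ for all relevant $e,f$. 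One has to be a little careful here because Proposition~\ref{intertwining-afp} needs ``for all $p\in\Proj(\mathcal M)$ and all $q\in\Proj(\mathcal B)$, $p\mathcal M p\npreceq_{\mathcal M}q\mathcal B q$'', which is slightly stronger than what we assumed about $\mathcal A$; but since $\mathcal A$ is regular in $p\mathcal M p$ this upgrade is standard (apply Proposition~\ref{intertwining-afp-bis} / Theorem~\ref{controlling-afp}).

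The substantive case is the third one. Fix $t\in(0,1)$ and the net $w_k\in\mathcal U(\mathcal A)$ with $\|E_{\mathcal N}(x^*\alpha_t(w_k)y)\|_{2,\Tr}\to 0$ for all $x,y\in p\widetilde{\mathcal M}$. The point of this condition is that it expresses that $\alpha_t(\mathcal A)$ ``moves towards $\mathcal N$'' in a way that is witnessed by the whole of $\alpha_t(p\mathcal M p)$ via the normalizer. Concretely, I would show that $\alpha_t(p\mathcal M p)$ is amenable relative to $p\mathcal N p$ inside $p\widetilde{\mathcal M}p$. The mechanism: $\mathcal A$ is amenable, so $\alpha_t(\mathcal A)$ is amenable, hence amenable relative to $p\mathcal N p$ inside $p\widetilde{\mathcal M}p$ trivially; then one propagates relative amenability from $\alpha_t(\mathcal A)$ to its normalizer $\alpha_t(\mathcal N_{p\mathcal M p}(\mathcal A)\dpr)=\alpha_t(p\mathcal M p)$ using the transversality/convergence property encoded by the vanishing of $\|E_{\mathcal N}(x^*\alpha_t(w_k)y)\|_{2,\Tr}$. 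This propagation step is exactly the analogue of \cite[Section~5]{Io12a} or the normalizer-amenability arguments in \cite{PV11,houdayer-vaes} in the semifinite AFP setting, and is where I expect the real work: one needs a relative-amenability-is-inherited-by-normalizer statement available in this nontracial/semifinite framework. Granting it, we have $\alpha_t(p\mathcal M p)$ amenable relative to $p\mathcal N p$ inside $p\widetilde{\mathcal M}p$, with $p\in\Proj(\mathcal B)$ and $p\mathcal M_1 p\neq p\mathcal B p\neq p\mathcal M_2 p$ (the first inequality because $\mathcal M_1 p$ nonamenable, the second by hypothesis), so Theorem~\ref{dichotomy-amenability} applies and yields, for each $i\in\{1,2\}$, a nonzero central projection $z_i\in\mathcal Z(\mathcal M_i)$ with $\mathcal M_iz_i$ amenable. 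In particular $\mathcal M_1$ has an amenable direct summand, contradicting the hypothesis. This contradiction forces $\mathcal A\preceq_{\mathcal M}q\mathcal B q$ for some $q\in\Proj(\mathcal B)$, which is what we wanted.

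The main obstacle, as indicated, is the step deducing relative amenability of $\alpha_t(p\mathcal M p)$ over $p\mathcal N p$ from relative amenability of $\alpha_t(\mathcal A)$ plus the normalizer condition; the subtlety is that $p\mathcal M p$ need not be tracial nor full, so one cannot invoke spectral gap as in \cite{Io12a} and must instead argue purely with the basic construction $\langle p\widetilde{\mathcal M}p,e_{p\mathcal N p}\rangle$ and Hilbert-bimodule/weak-containment techniques (this is precisely the philosophy flagged in the ``Comments on the proofs''). A secondary technical point is to make sure all the passages to corners ($p\in\Proj(\mathcal B)$, cutting by $q$, etc.) stay within the ``with expectation / $\Tr$ semifinite'' hypotheses required by Theorems~\ref{controlling-afp}, \ref{dichotomy-intertwining} and \ref{dichotomy-amenability}; this is routine given the semifinite AFP structure but needs to be stated. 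Finally, I would double-check the base case of the dichotomy, i.e.\ that Theorem~\ref{convergence-deformation} combined with $\mathcal A\npreceq_{\mathcal M}q\mathcal B q$ and the normalizer condition indeed lands us in the ``$\alpha_t$ moves to $\mathcal N$'' alternative for \emph{every} $t\in(0,1)$ rather than just one, since Theorem~\ref{dichotomy-amenability} needs a specific $t$ and we want freedom to choose it.
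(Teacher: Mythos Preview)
Your overall architecture is right---deformation/rigidity with $(\alpha_t)$, feeding into Theorems~\ref{dichotomy-intertwining} and~\ref{dichotomy-amenability}, ruling out $p\mathcal M p\preceq_{\mathcal M}q_i\mathcal M_iq_i$ via Proposition~\ref{intertwining-afp}---but you have the order of operations backwards, and as a result you have manufactured a hard step that the paper avoids entirely.

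The gap is your ``substantive case.'' You land in the situation where for every $t\in(0,1)$ there is a net $w_k\in\mathcal U(\mathcal A)$ with $\|E_{\mathcal N}(x^*\alpha_t(w_k)y)\|_{2,\Tr}\to 0$, and you then propose to \emph{derive} that $\alpha_t(p\mathcal Mp)$ is amenable relative to $p\mathcal N p$ by ``propagating relative amenability from $\alpha_t(\mathcal A)$ to its normalizer using the vanishing condition.'' You correctly flag this as ``where I expect the real work'' and suggest it needs a semifinite/nontracial adaptation of \cite{PV11} or \cite[Section~5]{Io12a}. But no such adaptation is needed, and the mechanism you sketch (vanishing of $E_{\mathcal N}$ plus amenability of $\mathcal A$ implies relative amenability of the normalizer) is not how \cite{PV11} works and is not something you can carry out directly.

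The point you are missing is that $p\in\Proj(\mathcal B)$ has finite trace, so $p\widetilde{\mathcal M}p=p\mathcal N p\rtimes\F_2$ is a \emph{tracial} crossed product by a free group. Hence \cite[Theorem~1.6]{PV11} applies verbatim to the amenable subalgebra $\alpha_t(\mathcal A)\subset p\widetilde{\mathcal M}p$ and gives the dichotomy directly: either $\alpha_t(\mathcal A)\preceq_{p\widetilde{\mathcal M}p}p\mathcal N p$, or $\alpha_t(p\mathcal M p)=\mathcal N_{p\widetilde{\mathcal M}p}(\alpha_t(\mathcal A))''$ is amenable relative to $p\mathcal N p$. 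The paper runs this dichotomy \emph{first}: the second branch is killed immediately by Theorem~\ref{dichotomy-amenability} (since $\mathcal M_1$ has no amenable direct summand), and only in the first branch does one invoke Theorem~\ref{dichotomy-intertwining} (via Lemma~\ref{intertwining-general-HR}) to reach the conclusion, with Proposition~\ref{intertwining-afp} disposing of the $p\mathcal M p\preceq_{\mathcal M}q_i\mathcal M_iq_i$ alternative. So your ``main obstacle'' dissolves once you invoke \cite[Theorem~1.6]{PV11} as a black box rather than trying to reprove it; the whole proof is then a short assembly of existing results, with no new relative-amenability argument required.
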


\begin{proof}
Put $\widetilde{\mathcal M} = \mathcal M \ast_{\mathcal B} (\mathcal B \ovt \rL(\F_2))$ and regard $p \widetilde{\mathcal M} p$ as the tracial crossed product von Neumann algebra $p \widetilde{\mathcal M} p = p\mathcal Np \rtimes \F_2$ with $\mathcal N = \bigvee \{u_g \mathcal M u_g : g \in \F_2\}$. We denote by $(\alpha_t)$ the malleable deformation from Section \ref{section-malleable}. Applying Popa-Vaes's dichotomy result \cite[Theorem 1.6]{PV11} to the inclusion $\alpha_t(\mathcal A) \subset p \widetilde{\mathcal  M} p$ for $t \in (0, 1)$, we get that at least one of the following holds true:
\begin{enumerate}
\item Either $\alpha_t(\mathcal A) \preceq_{p \widetilde{\mathcal M} p} p \mathcal N p$.
\item Or $\alpha_t(p \mathcal M p)$ is amenable relative to $p \mathcal N p$ inside $p \widetilde{\mathcal M} p$.
\end{enumerate}

Since $\mathcal M_1$ has no amenable direct summand, case (2) cannot hold by Theorem \ref{dichotomy-amenability}. It remains to show that case (1) leads to the conclusion of the theorem.

In case (1), using Lemma \ref{intertwining-general-HR} and Theorem \ref{dichotomy-intertwining}, we get that either there exists $q \in \Proj(\mathcal B)$ such that $\mathcal A \preceq_{\mathcal M} q \mathcal B q$ or there exist $i \in \{1, 2\}$ and $q_i \in \Proj(\mathcal M_i)$ such that $p \mathcal M p \preceq_{\mathcal M} q_i \mathcal M_i q_i$. Since the latter case is impossible by Proposition \ref{intertwining-afp}, we get $\mathcal A \preceq_{\mathcal M} q\mathcal Bq$ for some $q \in \Proj(\mathcal B)$.
\end{proof}

\subsection{Proof of Theorem \ref{thmA}}

We first need to prove the following well-known result.

\begin{lem}\label{nontrivial-free-product}
Let $M$ be any von Neumann algebra such that $M \neq \C$ and $\varphi$ any faithful normal state on $M$. Realize the continuous core $\core(M) = M \rtimes_\varphi \R$. Then for every nonzero projection $p \in \rL(\R)$, we have $\rL(\R) p \neq p\core(M)p$.
\end{lem}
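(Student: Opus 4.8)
The plan is to show that $\rL(\R)$ is properly contained in $\core(M)$ in a strong sense: no nonzero cut-down of $\rL(\R)$ by one of its own projections can exhaust the corresponding corner of $\core(M)$. The natural strategy is to exhibit, for a given nonzero projection $p \in \rL(\R)$, an element of $p\,\core(M)\,p$ that does not lie in $\rL(\R)p$. Since $M \neq \C$, we may fix a nonzero element $x \in M \ominus \C 1$ (i.e.\ with $\varphi(x) = 0$, $x \neq 0$); its image $\pi_\varphi(x) \in \core(M)$ is nonzero and, crucially, $E_{\rL(\R)}(\pi_\varphi(x)) = \varphi(x) = 0$ by the formula $E_{\rL(\R)}(y\lambda_\varphi(s)) = \varphi(y)\lambda_\varphi(s)$ recalled in the noncommutative flow of weights subsection.

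First I would reduce to the case $p = 1$ when convenient, or rather work directly with $p$. Note that $\rL(\R)$ is abelian (it is a copy of $\rL(\R) \cong \rL^\infty(\R)$), hence $p \in \rL(\R)$ is central in $\rL(\R)$ and $\rL(\R)p$ is again abelian, while $E_{\rL(\R)}$ restricts to a faithful normal conditional expectation $p\,\core(M)\,p \to \rL(\R)p$ (compose with multiplication by $p$, using that $p$ is $\sigma^{\widehat\varphi}$-fixed since $\lambda_\varphi(s)$ commutes with $\lambda_\varphi(t)$, so $p\,\core(M)\,p$ is with expectation onto $\rL(\R)p$). Now consider $y = p\,\pi_\varphi(x)\,p \in p\,\core(M)\,p$. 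If $y \neq 0$, then since $E_{\rL(\R)p}(y) = p\,E_{\rL(\R)}(\pi_\varphi(x))\,p = 0$ while $y \neq 0$, and every element of $\rL(\R)p$ is fixed by the expectation, we conclude $y \notin \rL(\R)p$, hence $\rL(\R)p \neq p\,\core(M)\,p$.

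So the only real point to check is that $y = p\,\pi_\varphi(x)\,p$ can be arranged to be nonzero for a suitable choice of $x \in M \ominus \C 1$. Here I would argue as follows: the central support of $p$ in $\core(M)$ is some projection $z$; since $\core(M)$ is a factor when $M$ is a factor, or in general since $\rL(\R)' \cap \core(M) \supseteq \rL(\R)$ and the dual action is ergodic on $\mathcal Z(\rL(\R))$... actually the clean argument is: suppose $p\,\pi_\varphi(x)\,p = 0$ for all $x \in M \ominus \C1$. Taking $x^*x - \varphi(x^*x)1 \in M \ominus \C 1$ and using that $p\,\pi_\varphi(x)^*\,\pi_\varphi(x)\,p$ then has the form $p\,\pi_\varphi(x^*x)\,p = \varphi(x^*x)\,p$ would force, combined with $p\,\pi_\varphi(x)\,p = 0$, that $\varphi(x^*x)p = p\,\pi_\varphi(x^*)\,p\,\pi_\varphi(x)\,p + \text{(terms not obviously controlled)}$ — this is getting circular. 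The cleanest route: observe that $p\,\pi_\varphi(M)\,p$ generates, together with $p\rL(\R)p$... better yet, if $p\,\pi_\varphi(x)\,p = 0$ for all $x \in M \ominus \C 1$ then for all $x\in M$, $p\,\pi_\varphi(x)\,p = \varphi(x)p$, i.e.\ $p\,\pi_\varphi(M)\,p = \C p$. Then the von Neumann algebra generated by $p\,\pi_\varphi(M)\,p$ and $p\lambda_\varphi(\R)p = p\rL(\R)p$ inside $p\,\core(M)\,p$ would be just $\rL(\R)p$; but $\pi_\varphi(M)$ and $\lambda_\varphi(\R)$ generate $\core(M)$, and cutting a generating set by a projection $p$ that commutes with $\lambda_\varphi(\R)$ generates $p\,\core(M)\,p$ only if $p$ has full central support — which, after passing to the central support, shows $p\,\core(M)\,p = \rL(\R)p$ on that piece, i.e.\ $\core(M)z = \rL(\R)z$ for $z$ the central support of $p$.

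\textbf{The main obstacle} is therefore the final contradiction: ruling out $\core(M)z = \rL(\R)z$ for a nonzero $\theta$-invariant (or not) central projection-ish $z$. Here I would invoke that $\rL(\R)$ is abelian while $\core(M)$ cannot be abelian when $M \neq \C$: indeed Takesaki duality gives $\core(M)\rtimes_\theta \R \cong M \ovt \B(\rL^2(\R))$, and if $\core(M)z$ were abelian (a corner of $\rL(\R)$) then, chasing through the trace-scaling dual action which acts on $\mathcal Z(\core(M)) = \rL^\infty(\R)^{?}$... the honest statement is that the flow of weights of $M$, which is the action $\theta$ on $\mathcal Z(\core(M))$, together with $\core(M)$ being the full continuous core, forces $\core(M)$ to be type ${\rm II}_\infty$ (or have a type ${\rm II}_\infty$ part) as soon as $M \neq \C$; a corner of the abelian $\rL(\R)$ is type ${\rm I}$, contradiction. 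Concretely: $\core(M)$ has no type ${\rm I}$ direct summand when $M\neq \C$ because $M$ has no direct summand equal to $\C$ with trivial modular flow — this is where I expect to lean on a known structural fact (e.g.\ that $\core(M)$ is semifinite with $M \hookrightarrow \core(M)$ and $M$ is diffuse or nontrivial, so $\core(M)$ is not of type ${\rm I}$ on any corner meeting $\pi_\varphi(M)'$...). I would make this precise by noting $\rL(\R)p$ being all of $p\core(M)p$ would make $\pi_\varphi(x)p \in \rL(\R)p$ for every $x$, contradicting $E_{\rL(\R)}(\pi_\varphi(x)) = \varphi(x)1$ being the only $\rL(\R)$-part unless $\pi_\varphi(x)p \in \C p$, i.e.\ $x \mapsto$ scalar on the $p$-corner, i.e.\ $\pi_\varphi(M)$ acts on $p$ as scalars — impossible since $\pi_\varphi$ is faithful and $p \neq 0$, once we know $p$ has full central support, which we reduce to. I expect the write-up to spend most of its length carefully setting up the central-support reduction and citing the type-classification fact for $\core(M)$.
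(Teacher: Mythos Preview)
Your strategy --- find $x \in M$ with $\varphi(x)=0$, $x\neq 0$, and show $p\,\pi_\varphi(x)\,p \neq 0$ --- is natural, and you correctly isolate the only nontrivial point: why should $p\,\pi_\varphi(x)\,p$ be nonzero for some such $x$? Unfortunately the resolution you sketch has a genuine gap. Your fallback is to argue that $\core(M)$ has no type ${\rm I}$ direct summand when $M\neq\C$, so that the abelian corner $\rL(\R)p = p\,\core(M)\,p$ yields a contradiction. But this structural claim is simply false: if $M=\C^n$ with $n\geq 2$ (or more generally if $M$ is abelian), then $\core(M)\cong M\ovt\rL(\R)$ is abelian, hence entirely of type ${\rm I}$. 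The lemma still holds in this case --- $\rL(\R)$ sits as the diagonal copy inside $\C^n\ovt\rL(\R)$ and is strictly smaller in every corner --- but no type argument will detect it. Your other attempts (generation, central-support reduction) also stall because cutting generators of $\core(M)$ by a projection $p$ that does not commute with $\pi_\varphi(M)$ does not produce generators of $p\,\core(M)\,p$.

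The paper's proof avoids the non-vanishing problem by a simple device you are missing: work in the centralizer $M^\varphi$. Any $x\in M^\varphi$ commutes with every $\lambda_\varphi(s)$, hence with $p\in\rL(\R)$, so $p\,\pi_\varphi(x)\,p=\pi_\varphi(x)\,p$, which is nonzero whenever $x$ is invertible. When $M^\varphi\neq\C$, take a nontrivial projection $r\in M^\varphi$ and set $x=\varphi(1-r)\,r-\varphi(r)\,(1-r)$: this $x$ is invertible, lies in $M^\varphi$, and has $\varphi(x)=0$, so $\pi_\varphi(x)\,p\in p\,\core(M)\,p\setminus\rL(\R)p$. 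When $M^\varphi=\C$, one argues that $M$ is a factor (since $\mathcal Z(M)\subset M^\varphi$); if semifinite, the density of $\varphi$ has spectral projections in $M^\varphi$, forcing $\varphi$ to be the trace and $M=\C$, a contradiction; if type ${\rm III}$, Connes's classification forces type ${\rm III_1}$, so $\core(M)$ is a ${\rm II}_\infty$ factor and no corner is abelian. The point is that the centralizer trick converts your delicate non-vanishing question into a triviality, and the residual case $M^\varphi=\C$ is exactly where the structural input belongs.
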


\begin{proof}
There are two cases to consider.

{\bf Case (1): assume that $M^\varphi \neq \C$.} Choose $r \in M^\varphi$ a projection such that $r \neq 0, 1$. Observe that $x = \varphi(1 - r) \, r - \varphi(r) \, (1 - r) \in M^\varphi$ is invertible and $\varphi(x) = 0$. Then for every nonzero projection $p \in \rL(\R)$, we have $x p \neq 0$ and $E_{\rL(\R) p}(x p) = \varphi(x) p = 0$. This proves that $\rL(\R)p \neq pMp$.

{\bf Case (2): assume that $M^\varphi = \C$.} Since $\mathcal Z(M) \subset \mathcal Z(M^\varphi)$, it follows that $M$ is a factor. If $M$ is of type ${\rm III}$, it follows from Connes's classification of type ${\rm III}$ factors \cite{connes73} that $M$ is necessarily of type ${\rm III_1}$. In that case, $\core(M)$ is a type ${\rm II_\infty}$ factor and thus $\rL(\R)p \neq p \core(M) p$ for every nonzero projection $p \in \rL(\R)$. If $M$ is a semifinite factor with semifinite faithful normal trace $\Tr$, there exists $b \in \rL^1(M, \Tr)_+$ such that $\varphi = \Tr(b \, \cdot)$ and $\|b\|_{1, \Tr} = 1$. Let $q \in M$ be a nonzero spectral projection of $b$. Since 
$$\varphi(qx) = \Tr(b q x) = \Tr(q b x) = \Tr(b x q) = \varphi(xq)$$
for all $x \in M$, we get $q \in M^\varphi$ and so $q = 1$. This shows that $b = 1$ and $\Tr = \varphi$ is a finite trace on $M$. Hence $M = M^\varphi = \C$, which is a contradiction.
\end{proof}

\begin{proof}[Proof of Theorem \ref{thmA}]
By \cite[Theorem 4.1]{ueda-advances}, we know that there exists a nonzero projection $z \in \mathcal Z(M)$ such that $M z$ is a full factor and $M(1 - z)$ is a purely atomic von Neumann algebra. In particular, $M$ is not amenable.

In the case when both $M_1$ and $M_2$ are amenable, \cite[Theorem 5.5]{houdayer-ricard} implies that $M$ has no Cartan subalgebra. It remains to consider the case when $M_1$ or $M_2$ is not amenable. Without loss of generality, we may assume that $M_1$ is not amenable.

By contradiction, assume that $M$ has a Cartan subalgebra. Hence, $Mz$ also has a Cartan subalgebra. Let $p \in \mathcal Z(M_1)$ be the largest nonzero projection such that $M_1 p$ has no amenable direct summand. Since $M(1 - z)$ is purely atomic, we necessarily have $p \leq z$.

By \cite[Lemma 2.2]{ueda-advances}, we have
$$(pMp, \frac{1}{\varphi(p)} \varphi (p \cdot p)) = (M_1 p, \frac{1}{\varphi_1(p)} \varphi_1(\cdot p)) \ast (pNp, \frac{1}{\varphi(p)}\varphi (p \cdot p))$$
with $N = (\C p \oplus M_1(1 - p)) \vee M_2$. Observe that $pNp \neq \C p$. Indeed let $q \in M_2$ be a projection such that $q \neq 0, 1$. Then $pqp = \varphi_2(q)p + p(q - \varphi_2(q))p \in pNp \setminus \C p$. Since $Mz$ is a factor and $p \leq z$, it follows that $pMp$ has a Cartan subalgebra by \cite[Lemma 3.5]{popa-malleable1}. 

From the previous discussion, it follows that we may assume that $M_1$ has no amenable direct summand, $M_2 \neq \C$ and $M$ has a Cartan subalgebra  $A \subset M$. Using Notation \ref{notation}, denote by $\core(A) \subset \core(M)$ the Cartan subalgebra in the continuous core $\core(M) = \core(M_1) \ast_{\rL(\R)} \core(M_2)$. Let $q \in \Proj(\rL(\R))$. Since $\core(A) \subset \core(M)$ is maximal abelian and $\Tr | \core(A)$ is semifinite, \cite[Lemma 2.1]{houdayer-vaes} shows that there exists a nonzero finite trace projection $p \in \core(A)$ and a partial isometry $v \in \core(M)$ such that $p = v^*v$ and $q = vv^*$. Observe that $v \core(A) v^* \subset q \core(M) q$ is still a Cartan subalgebra by \cite[Lemma 3.5]{popa-malleable1}. 

By Lemma \ref{intertwining-general-HR}, Proposition \ref{amenable-core}, Theorem \ref{general-semifinite} and Lemma \ref{nontrivial-free-product}, there exists $q' \in \Proj(\rL(\R))$ such that $v \core(A) v^*\preceq_{\core(M)} \rL(\R) q'$. Then Proposition \ref{intertwining-core} implies that $A \preceq_M \C$. This contradicts the fact that $A$ is diffuse and finishes the proof of Theorem \ref{thmA}.
\end{proof}

\subsection{Proof of Theorem \ref{thmB}}

\begin{proof}[Proof of Theorem \ref{thmB}]
Let $A \subset M$ be a Cartan subalgebra. Since $A, B \subset M$ are both tracial von Neumann subalgebras of $M$ with expectation, we use Notation \ref{notation}. Let  $q \in \Proj(\mathcal Z(\core(B)))$. By \cite[Lemma 2.1]{houdayer-vaes}, there exists $p \in \Proj(\core(A))$ and a partial isometry $v \in \core(M)$ such that $p = v^*v$ and $q = vv^*$. Observe that $v \core(A) v^* \subset q \core(M) q$ is still a Cartan subalgebra by \cite[Lemma 3.5]{popa-malleable1}. 

Using the assumptions, by Lemma \ref{intertwining-general-HR}, Proposition \ref{amenable-core}, \cite[Proposition 5.5]{houdayer-vaes} and Theorem \ref{general-semifinite}, there exists $q' \in \Proj(\mathcal Z(\core(B)))$ such that $v \core(A) v^*\preceq_{\core(M)} \core(B) q'$. Then Proposition \ref{intertwining-core} implies that $A \preceq_M B$.
\end{proof}

\section{Proof of Theorem \ref{thmC}} 

Let $\mathcal R$ be any countable nonsingular equivalence relation on a standard measure space $(X, \mu)$. Following \cite{feldman-moore}, denote by $m$ the measure on $\cR$ given by
$$m(\mathcal W) = \int_X |\{y \in X : (x,y) \in \mathcal W\}| \; \rd\mu(x) $$
for all measurable subsets $\mathcal W \subset \cR$. We denote by $[\mathcal R]$ the full group of $\mathcal R$, $M = \rL(\cR)$ the von Neumann algebra of $\mathcal R$ and identify $\rL^2(M) = \rL^2(\cR,m)$. For all $\psi\in [\cR]$, define $u(\psi) \in \mathcal U(M)$ whose action on $\rL^2(\cR,m)$ is given by 
$$(u(\psi)\xi)(x,y) = \left( \frac{{\rm d}( \mu \circ \psi^{-1})}{{\rm d} \mu}(x)\right)^{1/2} \xi(\psi^{-1}(x),y).$$

We view $\rL^\infty(\cR)$ as acting on $\rL^2(\cR,m)$ by multiplication operators. Note that the unitaries $u(\psi) \in \mathcal U(M)$ for $\psi \in [\cR]$ normalize $\rL^\infty(\cR)$ and that $\rL^\infty(X) \subset \rL^\infty(\cR)$, by identifying a function $F \in \rL^\infty(X)$ with the function on $\cR$ given by $(x,y) \mapsto F(x)$.

Recall from \cite[Definition 5]{connes-feldman-weiss} that $\cR$ is \emph{amenable} if there exists a norm one projection $\Phi : \rL^\infty(\mathcal R) \to \rL^\infty(X)$ satisfying
$$\Phi(u(\psi) F u(\psi)^*) = u(\psi) \Phi(F) u(\psi)^*, \forall \psi \in [\mathcal R].$$
By \cite[Theorem 10]{connes-feldman-weiss}, a countable nonsingular equivalence relation $\mathcal R$ is amenable if and only if it is hyperfinite. We will say that a countable nonsingular equivalence relation $\mathcal R$ is {\em nowhere amenable} if for every measurable subset $\mathcal U \subset X$ such that $\mu(\mathcal U) > 0$, the equivalence relation $\mathcal R | \mathcal U = \mathcal R \cap (\mathcal U \times \mathcal U)$ is nonamenable. 

Recall the following definition due to Gaboriau \cite[Definition IV.6]{Ga99}.

\begin{df}
Let $\mathcal R$ be a countable nonsingular equivalence relation on a standard measure space $(X, \mu)$ and $\mathcal R_1, \mathcal R_2 \subset \mathcal R$ subequivalence relations. We say that $\mathcal R$ {\em splits as the free product} $\mathcal R = \mathcal R_1 \ast \mathcal R_2$ if 
\begin{itemize}
\item $\mathcal R$ is generated by $\mathcal R_1$ and $\mathcal R_2$;
\item For every $p \in \N_{>0}$ and almost every $2p$-tuple $(x_j)_{j \in \Z/2p\Z}$ in $X$ such that $(x_{2i - 1}, x_{2i}) \in \mathcal R_1$ and $(x_{2i}, x_{2i + 1}) \in \mathcal R_2$ for all $i \in \Z/p\Z$, there exists $j \in \Z/2p\Z$ such that $x_j = x_{j + 1}$.
\end{itemize}
\end{df}

We have the following well-known fact:

\begin{prop}\label{free-equivalence}
Let $\mathcal R$ be a countable nonsingular equivalence relation on a standard measure space $(X, \mu)$ and $\mathcal R_1, \mathcal R_2 \subset \mathcal R$ subequivalence relations. Let $B = \rL^\infty(X)$, $M_1 = \rL(\mathcal R_1)$, $M_2 = \rL(\mathcal R_2)$, $M = \rL(\mathcal R)$ and denote by $E_1 : M_1 \to B$, $E_2 : M_2 \to B$, $E : M \to B$ the canonical faithful normal conditional expectations. The following conditions are equivalent:
\begin{enumerate}
\item $\mathcal R$ splits as the free product $\mathcal R = \mathcal R_1 \ast \mathcal R_2$.
\item $(M, E) = (M_1, E_1) \ast_B (M_2, E_2)$
\end{enumerate}
\end{prop}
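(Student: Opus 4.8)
The plan is to identify the von Neumann algebra $M = \rL(\mathcal R)$ with a crossed-product-like object built from $B = \rL^\infty(X)$ and the full groups, and then to recognize the freeness condition defining the amalgamated free product directly in terms of the combinatorial splitting condition for $\mathcal R$. The key observation is that $\rL(\mathcal R_i)$ is generated by $B$ together with the normalizing unitaries $u(\psi)$ for $\psi$ in the full pseudogroup (partial transformations) of $\mathcal R_i$, and that any element of $\rL(\mathcal R)$ is a (strongly convergent) sum $\sum_\psi a_\psi u(\psi)$ over partial transformations $\psi$, with the conditional expectation $E : M \to B$ given by restriction to the diagonal, i.e.\ $E(\sum_\psi a_\psi u(\psi)) = a_{\id}$ (the coefficient of the identity partial transformation on its domain). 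Thus $M_i \ominus B$ is spanned by $\{a u(\psi) : \psi \in [[\mathcal R_i]],\ \psi\ \text{has no fixed points},\ a \in \rL^\infty(\dom\psi)\}$ together with $\{au(\id_{\mathcal U}) : \mathcal U \subset X,\ a \perp \C\ \text{on}\ \mathcal U\}$; the essential point is that a reduced word $x_1 \cdots x_n$ with $x_j \in M_{i_j} \ominus B$, $i_j \neq i_{j+1}$, after expanding each $x_j$ into its partial-transformation components, becomes a sum of terms $a_1 u(\psi_1) \cdots a_n u(\psi_n)$ with each $\psi_j$ a fixed-point-free partial transformation of $\mathcal R_{i_j}$.

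Next I would make precise the claim that $E(x_1\cdots x_n) = 0$. Expanding, $E(a_1 u(\psi_1)\cdots a_n u(\psi_n))$ is supported on the set of $x$ for which the composition $\psi_1 \psi_2 \cdots \psi_n$ fixes $x$, i.e.\ $\psi_n(x) = x_n$, $\psi_{n-1}(x_n) = x_{n-1}, \dots$ returns to $x$. Setting $x = x_0 = x_n'$ this produces exactly a $2p$-cycle configuration (after relabelling, since $n$ alternating partial transformations of $\mathcal R_1,\mathcal R_2$ with $n$ even, or one folds the odd case in) of the type appearing in Definition~2.5, with consecutive pairs in $\mathcal R_1$ resp.\ $\mathcal R_2$. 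The splitting hypothesis forces some consecutive $x_j = x_{j+1}$, which means the corresponding $\psi_j$ has a fixed point there — contradicting that $\psi_j$ was chosen fixed-point-free (or, in the "$a \perp \C$" case, it forces the coefficient function to vanish at that point). Hence the set on which $E(a_1 u(\psi_1)\cdots a_n u(\psi_n))$ is nonzero has measure zero, giving $E(x_1\cdots x_n) = 0$; this is precisely the freeness condition, so by uniqueness of the amalgamated free product (the pair $(M,E)$ generated by $M_1, M_2$ with $E|_{M_i} = E_i$ and satisfying the freeness condition is unique) we get $(M,E) = (M_1,E_1)\ast_B(M_2,E_2)$. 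This proves $(1)\Rightarrow(2)$.

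For the converse $(2)\Rightarrow(1)$ I would argue contrapositively: if $\mathcal R$ does \emph{not} split as $\mathcal R_1 \ast \mathcal R_2$, then there is a positive-measure set of $2p$-tuples $(x_j)_{j\in\Z/2p\Z}$ with $(x_{2i-1},x_{2i})\in\mathcal R_1$, $(x_{2i},x_{2i+1})\in\mathcal R_2$, and $x_j \neq x_{j+1}$ for all $j$. Using a measurable selection / exhaustion argument one extracts from this family finitely many partial transformations $\psi_1 \in [[\mathcal R_1]], \psi_2 \in [[\mathcal R_2]], \dots$ that are fixed-point-free on a common positive-measure domain and whose composition around the cycle is the identity there; then the reduced word $u(\psi_1)u(\psi_2)\cdots u(\psi_{2p})$ has $E$ of it nonzero (its diagonal part is the indicator of that domain, up to Radon–Nikodym factors), violating the freeness condition. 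Hence $(M,E)$ is not the amalgamated free product. The main obstacle I anticipate is the bookkeeping in $(1)\Rightarrow(2)$: carefully reducing a general reduced word over $M_i \ominus B$ to the partial-transformation picture, handling the non-fixed-point-free "diagonal but mean-zero" pieces of $M_i \ominus B$, and matching the index conventions of the $2p$-cycle in Definition~2.5 (where the roles of $\mathcal R_1$ and $\mathcal R_2$ alternate and the cycle closes up) with an arbitrary alternating word of either parity. Once the dictionary between reduced words and cycles is set up cleanly, both implications are short; this is why I expect to cite it as "well-known" and keep the proof brief, essentially pointing to Feldman–Moore's description of $\rL(\mathcal R)$ and the defining universal property of $\ast_B$.
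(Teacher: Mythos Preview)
The paper does not prove this proposition at all; it is introduced with the phrase ``We have the following well-known fact'' and no argument is given. Your sketch is exactly the standard proof one has in mind when calling this well-known, and it is correct in outline: expand elements of $M_i \ominus B$ over fixed-point-free partial transformations in $[[\mathcal R_i]]$, observe that the diagonal part of a reduced word is supported on closed alternating paths in $\mathcal R$, and use Gaboriau's cycle condition to force a repeated vertex, hence a fixed point, hence measure zero. The converse by measurable selection of a bad cycle is also standard.

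One small correction: there are no ``diagonal but mean-zero'' pieces in $M_i \ominus B$. If $a \in B$ then $E_i(a) = a$, so $a \mathbf 1_{\mathcal U} \in M_i \ominus B$ forces $a \mathbf 1_{\mathcal U} = 0$. Every element of $M_i \ominus B$ is already a (strongly convergent) sum of terms $a\,u(\psi)$ with $\psi \in [[\mathcal R_i]]$ fixed-point-free; the second family you wrote down is empty, and the obstacle you flagged about handling those pieces does not arise. The only genuine bookkeeping is the parity issue you mention: for an odd-length alternating word one has $i_1 = i_n$, so in the associated closed path the first and last edges lie in the same $\mathcal R_{i_1}$; merge them into a single $\mathcal R_{i_1}$-edge to obtain an even cycle (or, if the merged edge is trivial, a strictly shorter odd cycle) and induct. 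With that reduction in hand both directions go through exactly as you describe.
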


We start by proving the following intermediate result in the framework of type ${\rm II_1}$ equivalence relations.

\begin{theo}\label{disintegration}
Let $\mathcal R$ be a countable (not necessarily ergodic) probability measure preserving  equivalence relation on a standard probability space $(X, \mu)$ which splits as a free product $\mathcal R = \mathcal R_1 \ast \mathcal R_2$ where $\mathcal R_i$ is a countable type ${\rm II_1}$ subequivalence relation for all $i \in \{1, 2\}$. 

Let $A \subset \rL(\mathcal R)$ be a Cartan subalgebra. Then $A \preceq_{\rL(\mathcal R)} \rL^\infty(X)$.
\end{theo}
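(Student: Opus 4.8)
The plan is to reduce to the ergodic case, run Popa's malleable deformation for amalgamated free products together with the Popa--Vaes dichotomy \cite[Theorem 1.6]{PV11}, and feed the two alternatives into Theorems \ref{dichotomy-intertwining} and \ref{dichotomy-amenability}; the only point that goes beyond the scheme of Theorem \ref{general-semifinite} is the case in which \emph{both} $\rL(\mathcal R_i)$ turn out to be amenable, which I would handle via \cite[Theorem 1.1]{PV11}. First I would disintegrate over the center: since $\mathcal Z(\rL(\mathcal R)) = \rL^\infty(X)^{\mathcal R} \subset B := \rL^\infty(X)$ and each $\mathcal R_i \subset \mathcal R$ fixes the $\mathcal R$-invariant functions pointwise, disintegrating $(X, \mu)$ over the space of ergodic components writes $\rL(\mathcal R) = \int^\oplus \rL(\mathcal R^\omega)\, d\nu(\omega)$, $B = \int^\oplus \rL^\infty(X^\omega)\, d\nu(\omega)$ and $A = \int^\oplus A^\omega\, d\nu(\omega)$, where for a.e.\ $\omega$ the relation $\mathcal R^\omega = \mathcal R|_{X^\omega}$ is ergodic of type ${\rm II}_1$, still splits as $\mathcal R^\omega = \mathcal R_1^\omega \ast \mathcal R_2^\omega$ with $\mathcal R_i^\omega$ of type ${\rm II}_1$, and $A^\omega \subset \rL(\mathcal R^\omega)$ is a Cartan subalgebra. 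Since $A \npreceq_{\rL(\mathcal R)} B$ forces $A^\omega \npreceq_{\rL(\mathcal R^\omega)} \rL^\infty(X^\omega)$ for a.e.\ $\omega$, it is enough to treat the ergodic case. So from now on $\mathcal R$ is ergodic, $M := \rL(\mathcal R)$ is a nonamenable ${\rm II}_1$ factor, $M = M_1 \ast_B M_2$ with $M_i := \rL(\mathcal R_i)$ (Proposition \ref{free-equivalence}), $B$ abelian (hence amenable), and $eBe \neq eM_ie$ for every nonzero projection $e \in B$ (by recurrence of $\mathcal R_i$); suppose toward a contradiction that $A \npreceq_M B$.

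Now I would form $\widetilde M = M \ast_B (B \ovt \rL(\F_2)) = \mathcal N \rtimes \F_2$ with the malleable deformation $(\alpha_t)$ of Section \ref{section-malleable}. The subalgebra $A$ is abelian, hence amenable relative to $\mathcal N$ inside $\widetilde M$, and regular in $M$, so \cite[Theorem 1.6]{PV11} applied to $\alpha_t(A) \subset \widetilde M$ (for some fixed $t \in (0,1)$) gives that either (1) $\alpha_t(A) \preceq_{\widetilde M} \mathcal N$, or (2) $\alpha_t(M) = \alpha_t(\mathcal N_M(A)\dpr)$ is amenable relative to $\mathcal N$ inside $\widetilde M$. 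In case (1), Lemma \ref{intertwining-general-HR} turns $\alpha_t(A) \preceq_{\widetilde M} \mathcal N$ into the hypothesis of Theorem \ref{dichotomy-intertwining}, and that theorem, together with Proposition \ref{intertwining-afp} --- whose hypotheses hold here since $eBe \neq eM_ie$ and, $M$ being a nonamenable ${\rm II}_1$ factor, $pMp \npreceq_M qBq$ for all projections $p \in M$, $q \in B$ --- yields $A \preceq_M B$, exactly as in the proof of Theorem \ref{general-semifinite}; this contradicts our assumption, so we are in case (2).

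To deal with case (2): by Theorem \ref{dichotomy-amenability}, case (2) forces each $M_i$ to have an amenable direct summand, and I claim each $M_i$ is in fact amenable. Indeed, if $M_1$ were not amenable, let $z_1 \in \mathcal Z(M_1) \subset B$ be the largest projection with $M_1 z_1$ amenable and set $q = 1 - z_1 \neq 0$; compressing by the projection $q \in B$ gives a finite amalgamated free product $qMq = (qM_1q) \ast_{qBq} (qM_2q)$ with $\widetilde{qMq} = q\widetilde Mq$, $\mathcal N_{qMq} = q\mathcal Nq$ and induced deformation $q\alpha_t(\cdot)q$ (using that $\alpha_t$ fixes $B$ and that $q$ commutes with the generators $u_g$), so $\alpha_t(qMq)$ is amenable relative to $q\mathcal Nq$ inside $q\widetilde Mq$; then Theorem \ref{dichotomy-amenability} applied to $qMq$ would say that $qM_1q = M_1q$ has an amenable direct summand, contradicting the maximality of $z_1$. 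Hence $M_1$ and, symmetrically, $M_2$ are amenable, so $\mathcal R_1$ and $\mathcal R_2$ are hyperfinite by \cite[Theorem 10]{connes-feldman-weiss}; being of type ${\rm II}_1$, each is generated by a single aperiodic pmp transformation $T_i$, and the defining property of the free product $\mathcal R = \mathcal R_1 \ast \mathcal R_2$ shows (as in \cite{Ga99}) that the resulting action $\F_2 = \langle T_1, T_2 \rangle \curvearrowright (X, \mu)$ is essentially free, and it is ergodic since its orbit relation is $\mathcal R$. Thus $M = \rL^\infty(X) \rtimes \F_2$ for a free ergodic pmp action of $\F_2$, so by \cite[Theorem 1.1]{PV11} the Cartan subalgebra $A$ is unitarily conjugate to $\rL^\infty(X) = B$; in particular $A \preceq_M B$, the desired contradiction.

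The main obstacle is case (2): unlike in Theorem \ref{general-semifinite}, the relative amenability alternative cannot be excluded outright, and one must show that it only occurs when both free factors are amenable --- this is exactly what the strengthening Theorem \ref{dichotomy-amenability} of \cite[Theorem 4.1]{Io12a} provides, after which the structural fact that a free product of two type ${\rm II}_1$ amenable equivalence relations is the orbit relation of a free ergodic action of $\F_2$ reduces the problem to \cite[Theorem 1.1]{PV11}. The compatibility of the disintegration over $\mathcal Z(\rL(\mathcal R))$, and of the compression by projections of $B$, with the amalgamated free product decomposition and the deformation is routine but has to be checked carefully.
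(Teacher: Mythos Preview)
Your proof is correct, but the paper takes a somewhat different route. Rather than disintegrating over ergodic components and then running the Popa--Vaes dichotomy, the paper makes a global case split at the outset: either both $\mathcal R_i$ are amenable, or one of them (say $\mathcal R_1$) is not. In the first case, Connes--Feldman--Weiss gives $M \cong B \rtimes \F_2$ for a free pmp action and \cite[Theorem 1.6]{PV11} yields $A \preceq_M B$ directly (nonamenability of $M$ suffices; no ergodicity is needed). In the second case, the paper compresses to a measurable set $\mathcal U$ on which $\mathcal R_1$ is \emph{nowhere} amenable; the key external ingredient here is Alvarez's Kurosh-type theorem \cite[Th\'eor\`eme 44]{Al08}, which guarantees that $\mathcal R | \mathcal U$ still decomposes as a free product $\mathcal S_1 \ast \mathcal S_2$ with $\mathcal S_1 = \mathcal R_1 | \mathcal U$ nowhere amenable, so that Theorem \ref{general-semifinite} applies verbatim.

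Your argument trades Alvarez's theorem for an iteration of Theorem \ref{dichotomy-amenability}: once case (2) of the dichotomy holds, you compress by central projections of $M_1$ (which lie in $B$, hence automatically preserve the amalgamated free product structure and the deformation) to force both $M_i$ to be globally amenable, landing you back in the $B \rtimes \F_2$ situation. This is a genuinely different and self-contained way to handle the mixed amenable/nonamenable situation. Two remarks: first, the disintegration step is not actually needed --- all of your arguments in cases (1) and (2) go through for non-ergodic $\mathcal R$ once you invoke \cite[Theorem 1.6]{PV11} rather than \cite[Theorem 1.1]{PV11} at the end (using only that $M$ is type ${\rm II_1}$ and nonamenable), and this sidesteps the measurable-selection issue implicit in the claim that $A \npreceq_M B$ forces $A^\omega \npreceq_{M^\omega} B^\omega$ almost everywhere. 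Second, in case (1) the hypothesis $pMp \npreceq_M qBq$ of Proposition \ref{intertwining-afp} follows already from $M$ being type ${\rm II_1}$ and $B$ abelian, so factoriality is not used there either.
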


\begin{proof}
Let $B = \rL^\infty(X)$, $M_1 = \rL(\mathcal R_1)$, $M_2 = \rL(\mathcal R_2)$ and $M = \rL(\mathcal R)$ so that $M = M_1 \ast_B M_2$. Let $A \subset M$ be a Cartan subalgebra. 

Assume first that both $\mathcal R_1$ and $\mathcal R_2$ are amenable and thus hyperfinite by \cite{connes-feldman-weiss}. Since both $\mathcal R_1$ and $\mathcal R_2$ are moreover of type ${\rm II_1}$, they are necessarily generated by a free pmp action of $\Z$. Hence $\mathcal R = \mathcal R_1 \ast \mathcal R_2$ is generated by a free pmp action of $\F_2$ and so $M \cong B \rtimes \F_2$. Then \cite[Theorem 1.6]{PV11} shows that $A \preceq_M B$.

Next assume that $\mathcal R_1$ or $\mathcal R_2$ is nonamenable. Without loss of generality, we may assume that $\mathcal R_1$ is nonamenable. Choose a measurable subset $\mathcal U \subset X$ such that $\mu(\mathcal U) > 0$ and $\mathcal R_1 | \mathcal U$ is nowhere amenable. Denote by $\mathcal V \subset X$ the $\mathcal R$-saturated measurable subset of $\mathcal U$ in $X$. Since $\mathcal R | \mathcal V = (\mathcal R_1 | \mathcal V) \ast (\mathcal R_2 | \mathcal V)$, we may assume that $\mu(\mathcal V) = 1$.

Since $\mathcal U$ is a complete section for $\mathcal R$, it follows from \cite[Th\'eor\`eme 44]{Al08} that we can write $\mathcal R | \mathcal U = \mathcal S_1 \ast \mathcal S_2$ where $\mathcal S_1 = \mathcal R_1 | \mathcal U$ and $\mathcal S_2$ is a type ${\rm II_1}$ subequivalence relation of $\mathcal R | \mathcal U$ which contains $\mathcal R_2 | \mathcal U$. 

Write $q = \mathbf 1_{\mathcal U} \in B$. By \cite[Corollary F.8]{BO08}, choose a projection $p \in A$ and a partial isometry $v \in M$ such that $v^*v = p$ and $vv^* = q$. Then $v A v^* \subset q M q$ is a Cartan subalgebra by \cite[Lemma 3.5]{popa-malleable1}. We can thus apply Theorem \ref{general-semifinite} to $\mathcal M = \rL(\mathcal S_1) \ast_{\rL^\infty(\mathcal U)} \rL(\mathcal S_2)$, $\mathcal A = v A v^*$ and $p = 1$. Then we obtain that $v A v^* \preceq_{qMq} Bq$, hence $A \preceq_M B$. 
\end{proof}

\begin{proof}[Proof of Theorem \ref{thmC}]
Write $B = \rL^\infty(X)$, $M_1 = \rL(\mathcal R_1)$, $M_2 = \rL(\mathcal R_2)$ and $M = \rL(\mathcal R)$ so that $M = M_1 \ast_B M_2$. Define on the standard infinite measure space $(X \times \R, m)$ the countable infinite measure preserving equivalence relations $\core(\mathcal R_1)$, $\core(\mathcal R_2)$ and $\core(\mathcal R)$ which are the Maharam extensions \cite{maharam} of the countable nonsingular equivalence relations $\mathcal R_1$, $\mathcal R_2$ and $\mathcal R$ respectively. Observe that both $\core(\mathcal R_1)$ and $\core(\mathcal R_2)$ are of type ${\rm II}$ and $\core(\mathcal R) = \core(\mathcal R_1) \ast \core(\mathcal R_2)$.

If we moreover write $\core(B) = \rL^\infty(X \times \R)$, we canonically have
$$\core(M_1) = \rL(\core(\mathcal R_1)), \; \core(M_2) = \rL(\core(\mathcal R_2)), \; \core(M) = \rL(\core(\mathcal R)) \; \text{ and } \; \core(M) = \core(M_1) \ast_{\core(B)} \core(M_2).$$

Let $A \subset M$ be a Cartan subalgebra. Using Notation \ref{notation}, we obtain that $\core(A) \subset \core(M)$ is a Cartan subalgebra. Let  $q \in \Proj(\core(B))$ such that $\Tr(q) = 1$. Up to cutting down by the central support of $q$ in $\core(M)$, we may assume that $q$ has central support equal to $1$ in $\core(M)$. By \cite[Lemma 2.1]{houdayer-vaes}, there exists $p \in \Proj(\core(A))$ and a partial isometry $v \in \core(M)$ such that $p = v^*v$ and $q = vv^*$. Observe that $v \core(A) v^* \subset q \core(M) q$ is still a Cartan subalgebra by \cite[Lemma 3.5]{popa-malleable1}. In order to show that $A$ and $B$ are unitarily conjugate inside $M$, using Theorem \ref{intertwining-general} and Proposition \ref{intertwining-core}, it suffices to show that $v \core(A) v^* \preceq_{\core(M)} \core(B)q$.

Let $\mathcal U \subset X \times \R$ be a measurable subset such that $\mathbf 1_{\mathcal U} = q$. Since $\mathbf 1_{\mathcal U}$ has central support equal to $1$ in $\core(M)$, $\mathcal U$ is a complete section for $\core(\mathcal R)$. By \cite[Th\'eor\`eme 44]{Al08}, we can write $\core(\mathcal R) | \mathcal U = \mathcal S_1 \ast \mathcal S_2$ where $\mathcal S_1 = \core(\mathcal R_1) | \mathcal U$ and $\mathcal S_2$ is a subequivalence relation of $\core(\mathcal R) | \mathcal U$ which contains $\core(\mathcal R_2) | \mathcal U$. In particular, both $\mathcal S_1$ and $\mathcal S_2$ are type ${\rm II_1}$ equivalence relations on the standard probability space $(\mathcal U, m | \mathcal U)$.

Let $\mathcal A = v \core(A) v^*$ and $\mathcal B = \rL^\infty(\mathcal U)$. Observe that $q\core(M)q = \rL(\core(\mathcal R) | \mathcal U) = \rL(\mathcal S_1 \ast \mathcal S_2)$ and $\mathcal A$ is a Cartan subalgebra in $\rL(\mathcal S_1 \ast \mathcal S_2)$. Then Theorem \ref{disintegration} implies that $\mathcal A \preceq_{\rL(\mathcal S_1 \ast \mathcal S_2)} \rL^\infty(\mathcal U)$, that is, $v \core(A) v^* \preceq_{\core(M)} \core(B)q$. This finishes the proof of Theorem \ref{thmC}.
\end{proof}

\section{Proof of Theorem \ref{thmD}}

We start by proving Theorem \ref{thmD} in the infinite measure preserving case. More precisely, we deduce the following result from its finite measure preserving counterpart proven in \cite[Theorem 1.1]{Io12a}.

\begin{theo}\label{gms-semifinite}
Let $\Gamma = \Gamma_1 \ast_{\Sigma} \Gamma_2$ be an amalgamated product group such that $\Sigma$ is finite and for all $i \in \{1, 2\}$, $\Gamma_i$ is infinite. Let $(\mathcal B, \Tr)$ be a type ${\rm I}$ von Neumann algebra endowed with a semifinite faithful normal trace. Let $\Gamma \curvearrowright (\mathcal B, \Tr)$ be a trace preserving action such that for all $i \in \{1, 2\}$, the crossed product von Neumann algebra $\mathcal B \rtimes \Gamma_i$ is of type ${\rm II}$. Put $\mathcal M = \mathcal B \rtimes \Gamma$. Let $p \in \Proj(\mathcal B)$ and $\mathcal A \subset p\mathcal M p$ any regular amenable von Neumann subalgebra.

Then for every nonzero projection $e \in \mathcal A ' \cap p \mathcal M p$, we have $\mathcal A e \preceq_{p\mathcal Mp} p \mathcal B p$.
\end{theo}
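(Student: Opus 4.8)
The plan is to deduce this from its probability measure preserving counterpart \cite[Theorem 1.1]{Io12a} (and its proof), after two reductions.

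\emph{Reduction to an abelian $\mathcal B$.} Since $\mathcal B$ is of type ${\rm I}$ and the trace is semifinite, write $\mathcal B$ as the direct sum of its homogeneous summands; homogeneous type being a conjugacy invariant, $\Gamma$ preserves each of them, so we may assume $\mathcal B = \mathcal Z \ovt \B(H)$ with $\mathcal Z = \rL^\infty(Z, \nu)$ abelian and $\nu$ $\sigma$-finite. A measurable selection argument — any automorphism of $\mathcal Z \ovt \B(H)$ that fixes $\mathcal Z$ pointwise is $\Ad$ of a unitary of $\mathcal Z \ovt \B(H)$ — produces a trace preserving action $\Gamma \actson (\mathcal Z, \nu)$ and unitaries $v_g \in \mathcal U(\mathcal B)$ implementing the $\Gamma$-action on $\mathcal B$ over this base action; the $v_g$ form a cocycle up to a scalar $2$-cocycle $\omega \in \rH^2(\Gamma, \mathbf T)$, so that $\mathcal M = \mathcal B \rtimes \Gamma$ is trace preservingly isomorphic to $\B(H) \ovt (\mathcal Z \rtimes_{\omega} \Gamma)$. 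The conclusion of the theorem is insensitive to tensoring by a type ${\rm I}$ factor, to compressing by a projection of $\mathcal B$, and to the presence of the scalar cocycle $\omega$, which affects neither the malleable deformation of Section \ref{section-malleable} nor the argument of \cite{Io12a}. Hence we may assume $\mathcal B = \rL^\infty(Z, \nu)$ is abelian, $\nu$ being $\sigma$-finite and $\Gamma$-invariant, $\mathcal M = \rL^\infty(Z) \rtimes \Gamma = \rL(\mathcal R)$ with $\mathcal R = \mathcal R(\Gamma \actson Z)$, and $p = \mathbf 1_{U_0}$ with $\nu(U_0) < \infty$.

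\emph{Reduction to a finite invariant measure.} As $\mathcal M$ contains the type ${\rm II}$ algebras $\mathcal M_i = \rL^\infty(Z) \rtimes \Gamma_i$, it is of type ${\rm II}$, so $\mathcal R$ is recurrent and admits a complete section $U \supseteq U_0$ with $\nu(U) < \infty$; enlarging $U$ we may take it $\Sigma$-invariant, so that $\mathcal R(\Sigma \actson Z)|_U = \mathcal R(\Sigma \actson U)$ is the orbit relation of a free action of the finite group $\Sigma$. Because $\Gamma \actson Z$ is free one has $\mathcal R = \mathcal R(\Gamma_1 \actson Z) \ast_{\mathcal R(\Sigma \actson Z)} \mathcal R(\Gamma_2 \actson Z)$, and restricting to $U$ (compare the use of \cite[Th\'eor\`eme 44]{Al08} elsewhere in this paper) one writes $\mathcal R|_U = \mathcal S_1 \ast_{\mathcal R(\Sigma \actson U)} \mathcal S_2$ where $\mathcal S_i$ contains the recurrent relation $\mathcal R(\Gamma_i \actson Z)|_U$. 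Realising each $\mathcal S_i$ as the orbit relation of a free action of $\Gamma_i$ extending the $\Sigma$-action, and assembling the two, one realises $\mathcal R|_U$ as the orbit relation of a probability measure preserving free action of $\Gamma = \Gamma_1 \ast_\Sigma \Gamma_2$ on $(U, \nu|_U)$ whose restriction to each $\Gamma_i$ is recurrent. Put $q = \mathbf 1_U \in \mathcal B$: then $q \mathcal M q = \rL^\infty(U) \rtimes \Gamma$, $q \mathcal B q = \rL^\infty(U)$ is its canonical Cartan subalgebra, and $\mathcal A \subseteq p \mathcal M p \subseteq q \mathcal M q$ remains regular and amenable inside $p \mathcal M p$. (Should the action fail to be ergodic, one argues over its ergodic components.)

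\emph{Conclusion and main difficulty.} By \cite[Theorem 1.1]{Io12a} and its proof, a regular amenable subalgebra of a corner of $\rL^\infty(U) \rtimes \Gamma$ embeds, along every nonzero projection of its relative commutant, into $\rL^\infty(U)$; applied to $\mathcal A \subseteq p \mathcal M p$ this gives $\mathcal A e \preceq_{q \mathcal M q} q \mathcal B q$ for every nonzero projection $e \in \mathcal A' \cap p \mathcal M p$. Since $p = \mathbf 1_{U_0} \leq q$ belongs to $q \mathcal B q$, a standard compression of the intertwiner (into $p(q \mathcal B q)p = p \mathcal B p$) yields $\mathcal A e \preceq_{p \mathcal M p} p \mathcal B p$, as desired. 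The content lies entirely in the two reductions: the first requires disintegrating the type ${\rm I}$ action and controlling the resulting scalar $2$-cocycle, while the second — the genuinely delicate point — requires checking that the finite-measure corner really falls within the scope of \cite[Theorem 1.1]{Io12a}, i.e.\ that the compressed amalgamated free product of equivalence relations over the finite relation $\mathcal R(\Sigma \actson U)$ is the orbit relation of a free probability measure preserving action of $\Gamma_1 \ast_\Sigma \Gamma_2$; alternatively one invokes directly the variant of Ioana's result for amalgamated free products of equivalence relations over a finite subequivalence relation, which is what the deformation/rigidity argument really establishes.
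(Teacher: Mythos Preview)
Your second reduction contains a gap I do not see how to close. Even granting freeness of the action (which the theorem does not assume, so your passage to an orbit equivalence relation $\mathcal R$ already needs justification), you assert that each $\mathcal S_i$ can be realized as the orbit relation of a free pmp action of $\Gamma_i$ extending the given $\Sigma$-action. Alvarez's theorem \cite{Al08} produces subequivalence relations $\mathcal S_i \supseteq \mathcal R(\Gamma_i \actson Z)|_U$, but gives no reason these should be orbit relations of $\Gamma_i$: realizing a prescribed type~${\rm II_1}$ equivalence relation as the orbit relation of a specified group is obstructed in general (already for groups with torsion). The alternative you offer---a ``variant of Ioana's result for amalgamated free products of equivalence relations over a finite subequivalence relation''---is not in \cite{Io12a}; establishing it amounts to rerunning the deformation/rigidity argument in the semifinite AFP setting, which is precisely the content of the theorem. (A smaller point: the $2$-cocycle in your first reduction is $\mathcal U(\mathcal Z)$-valued, not scalar.)

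The paper avoids all of this by a different device: the dual coaction $\Delta_\rho : \mathcal M \to \mathcal M \ovt \rL(\Lambda)$, $b u_g \mapsto b u_g \otimes v_{\rho(g)}$, where $\Lambda = \Gamma / \bigcap_{g} g\Sigma g^{-1}$ is again an amalgamated free product and $\rL(\Lambda)$ is a ${\rm II_1}$ factor without property Gamma. Assuming the conclusion fails on a central projection $z$, one applies the Popa--Vaes dichotomy \cite[Theorem 1.6]{PV11} to $(\id \otimes \alpha_t)(\Delta_\rho(\mathcal A z))$ inside $(p\mathcal M p \ovt \mathcal N) \rtimes \F_2$, and then Ioana's tracial results \cite[Theorems 3.2 and 5.2]{Io12a}---applied in the $\rL(\Lambda)$ tensor factor, not in $\mathcal M$---to rule out both branches. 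This works directly for arbitrary trace-preserving actions on type~${\rm I}$ algebras, with no freeness hypothesis and no reduction to a finite-measure setting.
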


\begin{proof}
For every subset $\mathcal F \subset \Gamma$, denote by $P_{\mathcal F}$ the orthogonal projection from $\rL^2(\mathcal M, \Tr)$ onto the closed linear span of $\{x u_g  : x \in \mathcal B \cap \rL^2(\mathcal B, \Tr), g \in \mathcal F\}$. Since $\mathcal N_{p \mathcal M p}(\mathcal A)\dpr = p \mathcal M p$, Proposition \ref{intertwining-semifinite} (see also \cite[Lemma 2.7]{houdayer-vaes}) provides a central projection $z \in \mathcal Z(p \mathcal M p)$ and a net of unitaries $w_k \in \mathcal U(\mathcal A z)$ such that:
\begin{itemize}
\item $\lim_k \|P_{\mathcal F}(w_k)\|_{2, \Tr} = 0$ for all finite subset $\mathcal F \subset \Gamma$.
\item For every $\varepsilon > 0$, there exists a finite subset $\mathcal F \subset \Gamma$ such that $\|a - P_{\mathcal F}(a)\|_{2, \Tr} \leq \varepsilon$ for all $a \in \Ball(\mathcal A(p - z))$.
\end{itemize}

We prove by contradiction that $z = 0$. So, assume that $z \neq 0$. Recall that $\Gamma = \Gamma_{1} \ast_{\Sigma} \Gamma_{2}$. Hence the subgroup $\Sigma_{0} = \bigcap_{g \in \Gamma} g \Sigma g^{-1} < \Sigma$ is finite and normal in $\Gamma$. Define the quotient homomorphism $\rho : \Gamma \to \Gamma / \Sigma_{0}$ and put $\Lambda = \Gamma / \Sigma_0$, $\Lambda_i = \Gamma_i / \Sigma_0$ for $i \in \{1, 2\}$, $\Upsilon = \Sigma / \Sigma_{0}$ so that $\Lambda = \Lambda_{1} \ast_{\Upsilon} \Lambda_{2}$. We get that $\bigcap_{s \in \Lambda} s \Upsilon s^{-1} = \{e\}$, hence $\rL(\Lambda)$ is a ${\rm II_1}$ factor which does not have property Gamma by \cite[Corollary 6.2]{Io12a}.

Define the unitary $W \in \mathcal U(\rL^2(\mathcal B, \Tr) \otimes \ell^2(\Gamma) \otimes \ell^2(\Lambda))$ by 
$$W(\xi \otimes \delta_g \otimes \delta_s) = \xi \otimes \delta_g \otimes \delta_{\rho(g^{-1})s}, \forall \xi \in \rL^2(\mathcal B, \Tr), \forall g \in \Gamma, \forall s \in \Lambda.$$
Next, define the {\em dual coaction} $\Delta_\rho : \mathcal M \to \mathcal M \ovt \rL(\Lambda)$ by $\Delta_\rho(x) = W^*(x \otimes 1)W$ for all $x \in \mathcal M$. Observe that $\Delta_\rho$ is a trace preserving $\ast$-embedding which satisfies $\Delta_\rho(b u_g) = b u_g \otimes v_{\rho(g)}$ for all $b \in \mathcal B$ and all $g \in \Gamma$.

For every subset $\mathcal F \subset \Gamma$, denote by $Q_{\rho(\mathcal F)}$ the orthogonal projection from $\rL^2(\rL(\Lambda))$ onto the closed linear span of $\{v_{\rho(g)} : g \in \mathcal F\}$. Observe that $(1 \otimes Q_{\rho(\mathcal F)})(\Delta_\rho(x)) = \Delta_\rho(P_{\Sigma_0 \mathcal F}(x))$ for all $x \in \mathcal M$. Since $\Delta_\rho$ is $\|\cdot\|_{2, \Tr}$-preserving and since $\Sigma_0$ is finite, for any finite subset $\mathcal F \subset \Gamma$, we have
$$\lim_k \|(1 \otimes Q_{\rho(\mathcal F)})(\Delta_\rho(w_k)) \|_2 = \lim_k \|\Delta_\rho(P_{\Sigma_0\mathcal F}(w_k)) \|_2 = 0.$$
Since $\Upsilon < \Lambda$ is a finite subgroup, this implies that $\Delta_\rho(\mathcal A z) \npreceq_{\mathcal M \ovt \rL(\Lambda)} q\mathcal M q \ovt \rL(\Upsilon)$ for all $q \in \Proj(\mathcal B)$.

Put $\widetilde \Lambda = \Lambda \ast_{\Upsilon} (\Upsilon \times \F_2) = \Lambda_1 \ast_{\Upsilon} \Lambda_2 \ast_{\Upsilon} (\Upsilon \times \F_2)$ and consider the malleable deformation $(\alpha_t)$ on $\rL(\widetilde \Lambda)$ from Section \ref{section-malleable}. Define $N < \Lambda$ the normal subgroup generated by  $\{t \Lambda t^{-1} : t \in \F_2\}$ so that $\rL(\widetilde \Lambda) = \mathcal N \rtimes \F_2$ with $\mathcal N = \rL(N)$. Applying Popa-Vaes's dichotomy result \cite[Theorem 1.6]{PV11} to each of the inclusions 
 $$(\id \otimes \alpha_t)(\Delta_\rho(\mathcal A z)) \subset p \mathcal M p \ovt \rL(\widetilde \Lambda) = (p \mathcal M p \ovt \mathcal N) \rtimes \F_2 \; \mbox{ with } \; t \in (0, 1),$$ we obtain that at least one of the following holds true:
\begin{enumerate}
\item Either there exists $t \in (0, 1)$ such that $(\id \otimes \alpha_t)(\Delta_\rho(\mathcal A z)) \preceq_{p\mathcal Mp \ovt \rL(\widetilde \Lambda)} p\mathcal Mp \ovt \mathcal N$.
\item Or for all $t \in (0, 1)$, $(\id \otimes \alpha_t)(\Delta_\rho(p\mathcal Mp))$ is amenable relative to $p \mathcal M p \ovt \mathcal N$ inside $p \mathcal M p \ovt \rL(\widetilde \Lambda)$.
\end{enumerate}
We will prove below that each case leads to a contradiction.

In case $(1)$, by  \cite[Theorem 3.2]{Io12a} and since $\Delta_\rho(\mathcal A z) \npreceq_{p \mathcal M p \ovt \rL(\Lambda)} p \mathcal M p \ovt \rL(\Upsilon)$ and $\mathcal N_{p\mathcal M p z}(\mathcal A z)\dpr = p\mathcal Mp z$, there exists $i \in \{ 1, 2 \}$ such that $\Delta_\rho(p\mathcal Mp z) \preceq_{p\mathcal Mp \ovt \rL(\Lambda)} p\mathcal Mp \ovt \rL(\Lambda_i)$. In order to get a contradiction, we will need the following.

\begin{claim}
Let $e \in \Proj(\mathcal M)$, $\mathcal Q \subset e \mathcal M e$ any von Neumann subalgebra and $\mathcal S$ any nonempty collection of subgroups of $\Gamma$. If $\mathcal Q \npreceq_{ \mathcal M} q (\mathcal B \rtimes H) q$ for all $H \in \mathcal S$ and all $q \in \Proj(\mathcal B)$, then $\Delta_\rho(\mathcal Q) \npreceq_{ \mathcal M \ovt \rL(\Lambda)} q \mathcal M q \ovt \rL(\rho(H))$ for all $H \in \mathcal S$ and all $q \in \Proj(\mathcal B)$. 
\end{claim}

\begin{proof}[Proof of the Claim]
Since $\mathcal Q \npreceq_{ \mathcal M } q (\mathcal B \rtimes H) q$ for all $H \in \mathcal S$ and all $q \in \Proj(\mathcal B)$, Proposition \ref{intertwining-semifinite} implies that there exists a net $v_k \in \mathcal U(\mathcal Q)$ such that $\lim_k \|P_{\mathcal F}(v_k)\|_{2, \Tr} = 0$ for all subsets $\mathcal F \subset \Gamma$ which are small relative to $\mathcal S$. Observe that since $\Sigma_0$ is finite, $\Sigma_0 \mathcal F$ is small relative to $\mathcal S$ for all subsets $\mathcal F \subset \Gamma$ which are small relative to $\mathcal S$. Moreover, $(1 \otimes Q_{\rho(\mathcal F)})(\Delta_\rho(x)) = \Delta_\rho(P_{\Sigma_0 \mathcal F}(x))$ for all $x \in \mathcal Q$ and all subsets $\mathcal F \subset \Gamma$ which are small relative to $\mathcal S$. Since $\Delta_\rho$ is $\|\cdot\|_{2, \Tr}$-preserving, for all subsets $\mathcal F \subset \Gamma$ which are small relative to $\mathcal S$, we have
\begin{equation}\label{convergence}
\lim_k \|(1 \otimes Q_{\rho(\mathcal F)})(\Delta_\rho(v_k)) \|_{2, \Tr} = \lim_k \|\Delta_\rho(P_{\Sigma_0 \mathcal F}(v_k)) \|_{2, \Tr} = 0.
\end{equation}

Denote by $\rho(\mathcal S)$ the nonempty collection of subgroups $\rho(H) \subset \Lambda$ with $H \in \mathcal S$. Let $\mathcal G \subset \Lambda$ be any subset which is small relative to $\rho(\mathcal S)$. Then there exist $n \geq 1$, $H_1, \dots, H_n \in \mathcal S$ and $s_1, t_1, \dots, s_n, t_n \in \Lambda$ such that $\mathcal G \subset \bigcup_{i = 1}^n s_i \rho(H_i) t_i$. Choose $g_i, h_i \in \Gamma$ such that $\rho(g_i) = s_i$ and $\rho(h_i) = t_i$ and denote $\mathcal F = \bigcup_{i = 1}^n g_i H_i h_i$. Then $\mathcal G \subset \rho(\mathcal F)$. Therefore, $(\ref{convergence})$ implies that $\lim_k \|(1 \otimes Q_{\mathcal G})(\Delta_\rho(v_k)) \|_{2, \Tr} = 0$ for all subsets $\mathcal G \subset \Lambda$ which are small relative to $\rho(\mathcal S)$. Thus, Proposition \ref{intertwining-semifinite} implies that $\Delta_\rho(\mathcal Q) \npreceq_{\mathcal M \ovt \rL(\Lambda)} q \mathcal M q \ovt \rL(\rho(H))$ for all $H \in \mathcal S$ and all $q \in \Proj(\mathcal B)$. 
\end{proof}

We apply the Claim to $\mathcal Q = p\mathcal Mp z$ and $\mathcal S = \{\Gamma_1, \Gamma_2\}$. In order to do that, we need to check that $p\mathcal Mp z \npreceq_{q \mathcal M q} q(\mathcal B \rtimes \Gamma_i)q$ for all $i \in \{1, 2\}$ and all $q \in \Proj(\mathcal B)$. Since $\mathcal B \rtimes \Sigma$ is a type ${\rm I}$ von Neumann algebra and $\mathcal B \rtimes \Gamma_i$ is a type ${\rm II}$ von Neumann algebra, Proposition \ref{intertwining-afp} yields the result. Therefore, by the Claim, we get that $\Delta_\rho(p\mathcal Mp z) \npreceq_{p\mathcal Mp \ovt \rL(\Lambda)} p\mathcal Mp \ovt \rL(\Lambda_i)$ for all $i \in \{1, 2\}$. This is a contradiction. 

In case $(2)$, since $\rL(\Lambda)$ does not have property Gamma, \cite[Theorem 5.2]{Io12a} shows that either there exists $i \in \{1, 2\}$ such that $\rL(\Lambda) \preceq_{\rL(\Lambda)} \rL(\Lambda_i)$ or $\rL(\Lambda)$ is amenable. Both of these cases are easily seen to lead to a contradiction. This finishes the proof of Theorem \ref{gms-semifinite}.
\end{proof}

\begin{proof}[Proof of Theorem \ref{thmD}]
Let now $\Gamma \curvearrowright (X, \mu)$ be any nonsingular free ergodic action on a standard measure space such that for all $i \in \{1, 2\}$, the restricted action $\Gamma_i \curvearrowright (X, \mu)$ is recurrent. Let $B = \rL^\infty(X)$ and put $M = B \rtimes \Gamma$. Assume that $A \subset M$ is another  Cartan subalgebra. 

Since $A, B \subset M$ are both tracial von Neumann subalgebras of $M$ with expectation, we use Notation \ref{notation}. Define $\core(B) = \rL^\infty(X \times \R)$ and consider the Maharam extension $\Gamma \curvearrowright \core(B)$ of the action $\Gamma \curvearrowright B$ so that we canonically have $\core(M) = \core(B) \rtimes \Gamma$. Observe that for all $i \in \{1, 2\}$, the action $\Gamma_i \curvearrowright \core(B)$ is still recurrent so that $\core(B) \rtimes \Gamma_i$ is a  type ${\rm II}$ von Neumann algebra.

Let $p \in \Proj(\core(A))$. By \cite[Lemma 2.1]{houdayer-vaes}, there exist $q \in \Proj(\core(B))$ and a partial isometry $v \in \core(M)$ such that $p = v^*v$ and $q = vv^*$. Observe that $v \core(A) v^* \subset q \core(M) q$ is still a Cartan subalgebra by \cite[Lemma 3.5]{popa-malleable1}. 

By Theorem \ref{gms-semifinite}, we get $v \core(A) v^* \preceq_{q \core(M) q} \core(B) q$. By Proposition \ref{intertwining-core}, this implies that $A \preceq_M B$. Since $M$ is a factor, by \cite[Theorem 2.5]{houdayer-vaes}, we get that there exists a unitary $u \in \mathcal U(M)$ such that $uAu^* = B$. This finishes the proof of Theorem \ref{thmD}.
\end{proof}

\section{AFP von Neumann algebras with many nonconjugate Cartan subalgebras}\label{non-uniqueness}

Connes and Jones exhibited in \cite{CJ81} the first examples  of  ${\rm II_1}$ factors $M$ with at least two Cartan subalgebras which are not conjugate by an automorphism of $M$. More concrete examples were found by Ozawa and Popa in \cite{OP08}.

Recently, Speelman and Vaes exhibited in \cite{SV11} the first examples of group measure space ${\rm II_1}$ factors $M = \rL^\infty(Y) \rtimes \Lambda$ with uncountably many non stably conjugate Cartan subalgebras. Recall from \cite{SV11} that two Cartan subalgebras $A$ and $B$ of a ${\rm II_1}$ factor $N$ are {\em stably conjugate} if there exists nonzero projections $p \in A$ and $q \in B$ and a surjective $\ast$-isomorphism $\alpha : pNp \to qNq$ such that $\alpha(Ap) = Bq$. Put $\mathcal N = N \ovt \mathbf B(\ell^2)$, $\mathcal A = A \ovt \ell^\infty$ and $\mathcal B = B \ovt \ell^\infty$. Observe that $\mathcal A$ and $\mathcal B$ are Cartan subalgebras in the type ${\rm II_\infty}$ factor $\mathcal N$. Moreover, we have that $A$ and $B$ are stably conjugate in $N$ if and only if $\mathcal A$ and $\mathcal B$ are conjugate in $\mathcal N$.

Let $\Lambda \curvearrowright (Y, \nu)$ be a probability measure preserving free ergodic action  as in the statement of \cite[Theorem 2]{SV11} so that the corresponding group measure space ${\rm II_1}$ factor $N = \rL^\infty(Y) \rtimes \Lambda$ has uncountably many non stably conjugate Cartan subalgebras.

Put $\Gamma = \Lambda \ast \Z$ and consider the \emph{induced} action $\Gamma \curvearrowright (X, \mu)$ with $X = \Ind_\Lambda^\Gamma Y$. Observe that $\Gamma \curvearrowright (X, \mu)$ is an infinite measure preserving free ergodic action. Write $\mathcal M = \rL^\infty(X) \rtimes \Gamma$ for the corresponding group measure space type ${\rm II_\infty}$ factor. Since $\Gamma = \Lambda \ast \Z$, we canonically have $\mathcal M = \mathcal M_1 \ast_{\mathcal B} \mathcal M_2$ with $\mathcal B = \rL^\infty(X)$, $\mathcal M_1 = \mathcal B \rtimes \Lambda$ and $\mathcal M_2 = \mathcal B \rtimes \Z$. On the other hand, we also have
$$\mathcal M = (\rL^\infty(Y) \rtimes \Lambda) \ovt \mathbf B(\ell^2(\Gamma / \Lambda)) = N \ovt \mathbf B(\ell^2(\Gamma / \Lambda)).$$

Therefore we obtain the following result.

\begin{theo}
The amalgamated free product type ${\rm II_\infty}$ factor $\mathcal M = \mathcal M_1 \ast_{\mathcal B} \mathcal M_2$ has uncountably many nonconjugate Cartan subalgebras.
\end{theo}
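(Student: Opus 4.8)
The plan is to transfer, via a dilation argument, the non-uniqueness of Cartan subalgebras from the type ${\rm II_1}$ factor $N$ produced by Speelman--Vaes \cite{SV11} to the amalgamated free product $\mathcal M = \mathcal M_1 \ast_{\mathcal B} \mathcal M_2$. The starting point is the isomorphism $\mathcal M = N \ovt \B(\ell^2(\Gamma/\Lambda))$ already recorded in the excerpt: since $\Lambda < \Gamma$ has infinite index, the induced action $\Gamma \actson (X,\mu) = \Ind_\Lambda^\Gamma (Y,\nu)$ gives $\rL^\infty(X) = \rL^\infty(Y)^{\ot \Gamma/\Lambda}$ in the appropriate sense and one checks directly that $\rL^\infty(X) \rtimes \Gamma \cong (\rL^\infty(Y) \rtimes \Lambda) \ovt \B(\ell^2(\Gamma/\Lambda))$, which is a type ${\rm II_\infty}$ factor because $N$ is a ${\rm II_1}$ factor.

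First I would recall the precise output of \cite{SV11}: the action $\Lambda \actson (Y,\nu)$ in \cite[Theorem 2]{SV11} is such that $N = \rL^\infty(Y) \rtimes \Lambda$ admits uncountably many Cartan subalgebras that are pairwise non stably conjugate. As noted in the paragraph preceding the theorem statement, stable conjugacy of Cartan subalgebras $A, B$ in a ${\rm II_1}$ factor $N$ is equivalent to honest unitary conjugacy of $A \ovt \ell^\infty$ and $B \ovt \ell^\infty$ inside $N \ovt \B(\ell^2)$. Hence, writing $\mathcal N = N \ovt \B(\ell^2)$ and letting $(A_i)_{i \in I}$ be an uncountable family of pairwise non stably conjugate Cartan subalgebras of $N$, the algebras $\mathcal A_i = A_i \ovt \ell^\infty \subset \mathcal N$ are Cartan subalgebras of the type ${\rm II_\infty}$ factor $\mathcal N$, and $\mathcal A_i$ is conjugate to $\mathcal A_j$ (by an automorphism of $\mathcal N$) if and only if $i = j$.

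Next I would identify $\mathcal N$ with $\mathcal M$. Since $\B(\ell^2(\Gamma/\Lambda)) \cong \B(\ell^2)$, the isomorphism $\mathcal M \cong N \ovt \B(\ell^2(\Gamma/\Lambda)) \cong \mathcal N$ transports the family $(\mathcal A_i)_{i \in I}$ to an uncountable family of Cartan subalgebras of $\mathcal M$, still pairwise nonconjugate by automorphisms of $\mathcal M$ (conjugacy is an isomorphism-invariant notion). This already yields the statement as written, with the decomposition $\mathcal M = \mathcal M_1 \ast_{\mathcal B} \mathcal M_2$ serving only to advertise that these AFP factors — which a priori one might hope have a unique Cartan subalgebra by analogy with Theorems \ref{thmB}, \ref{thmC}, \ref{thmD} — can fail uniqueness dramatically once the recurrence hypothesis is dropped (here the restricted action $\Z \actson (X,\mu)$ on the infinite measure space is dissipative, hence not recurrent).

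The only genuinely non-formal point, and the one I would write out with care, is the verification of the isomorphism $\rL^\infty(X) \rtimes \Gamma \cong (\rL^\infty(Y) \rtimes \Lambda) \ovt \B(\ell^2(\Gamma/\Lambda))$ for the induced action. This is the standard fact that inducing an action multiplies the crossed product by the type ${\rm I}$ factor $\B(\ell^2(\Gamma/\Lambda))$; concretely, choosing coset representatives for $\Gamma/\Lambda$ one writes $\rL^\infty(X) = \bigoplus_{g\Lambda} \rL^\infty(Y)$ with $\Lambda$ acting diagonally through the $\Lambda$-action on each copy and $\Gamma$ permuting the blocks, and then a direct computation with matrix units $e_{g\Lambda, h\Lambda}$ exhibits the tensor decomposition. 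I do not expect any obstacle beyond bookkeeping: everything else in the argument is a transcription of the equivalence between stable conjugacy and $\ell^\infty$-amplified conjugacy, which is already quoted in the text, together with the freeness and ergodicity of the induced action (inherited from $\Lambda \actson Y$). So I would present the proof as: (1) recall \cite[Theorem 2]{SV11} and the stable-conjugacy reformulation; (2) establish the induction isomorphism $\mathcal M \cong N \ovt \B(\ell^2(\Gamma/\Lambda))$; (3) pull back the family $(\mathcal A_i)$ and observe nonconjugacy is preserved.
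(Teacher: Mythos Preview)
Your proposal is correct and follows exactly the paper's approach: the argument is entirely contained in the setup paragraphs preceding the theorem, namely the isomorphism $\mathcal M \cong N \ovt \mathbf B(\ell^2(\Gamma/\Lambda))$ for the induced action together with the equivalence between stable conjugacy in $N$ and conjugacy after amplification by $\ell^\infty$, applied to the Speelman--Vaes family. There is nothing to add.
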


This result shows that the condition in Theorem \ref{thmD} imposing recurrence of the action $\Gamma_i \curvearrowright (X, \mu)$ for all $i \in \{1, 2\}$, is indeed necessary.

\end{document}